\newtheorem{theo}{Theorem}[section]
\newtheorem{prop}[theo]{Proposition}
\newtheorem{lemma}[theo]{Lemma}
\newtheorem{defn}[theo]{Definition}
\newtheorem{cor}[theo]{Corollary}
\newenvironment{proof}{\noindent {\sc Proof}.}
                {\phantom{a} \hfill \framebox[2.2mm]{ } \bigskip}
\newenvironment{customthm}[1]
  {\innercustomthm}
  {\endinnercustomthm}
\newcommand{\ZZ}{\mathbb{Z}}
\def\int{{\rm int}}
\newcommand{\D}{{\mathcal{D}}}
\renewcommand{\P}{{\mathcal{P}}}
\newcommand{\F}{{\mathcal{F}}}
\newcommand{\join}{\bowtie}
\newcommand{\ci}{{\rm Circ}}
\newcommand{\dci}{\overrightarrow{\rm Circ}}
\newcommand{\du}{\, \dot{\cup} \,}
\title{The directed Oberwolfach problem \\ with variable cycle lengths: a recursive construction}
\author{Suzan Kadri and  Mateja \v{S}ajna\footnote{Email: msajna@uottawa.ca. Phone: +1-613-562-5800 ext. 3522. Mailing address: Department of Mathematics and Statistics, University of Ottawa, 150 Louis-Pasteur Private, Ottawa, ON, K1N 6N5,Canada. }  \\ University of Ottawa}
\begin{document}
\maketitle \baselineskip 17pt

\begin{abstract}
The directed Oberwolfach problem OP$^\ast(m_1,\ldots,m_k)$ asks whether the complete symmetric digraph $K_n^\ast$, assuming $n=m_1+\ldots +m_k$, admits a decomposition into spanning subdigraphs, each a disjoint union of $k$ directed cycles of lengths $m_1,\ldots,m_k$. We hereby describe a method for constructing a solution to OP$^\ast(m_1,\ldots,m_k)$ given a solution to OP$^\ast(m_1,\ldots,m_\ell)$, for some $\ell<k$, if certain conditions on $m_1,\ldots,m_k$ are satisfied. This approach enables us to extend a solution for OP$^\ast(m_1,\ldots,m_\ell)$ into a solution for  OP$^\ast(m_1,\ldots,m_\ell,t)$, as well as into a solution for OP$^\ast(m_1,\ldots,m_\ell,2^{\langle t \rangle})$, where $2^{\langle t \rangle}$ denotes $t$ copies of 2, provided $t$ is sufficiently large.

In particular, our recursive construction allows us to effectively address the two-table directed Oberwolfach problem. We show that OP$^\ast(m_1,m_2)$ has a solution for all $2 \le m_1\le m_2$, with a definite exception of $m_1=m_2=3$ and a possible exception in the case that $m_1 \in \{ 4,6 \}$, $m_2$ is even, and $m_1+m_2 \ge 14$.
It has been shown previously that OP$^\ast(m_1,m_2)$  has a solution if $m_1+m_2$ is odd, and that OP$^\ast(m,m)$ has a solution if and only if $m \ne 3$.

In addition to solving many other cases of OP$^\ast$, we show that when $2 \le m_1+\ldots +m_k \le 13$, OP$^\ast(m_1,\ldots,m_k)$ has a solution if and only if $(m_1,\ldots,m_k) \not\in \{ (4),(6),(3,3) \}$.

\medskip
\noindent {\em Keywords:} Complete symmetric  digraph,  directed Oberwolfach problem, directed 2-factorization, recursive construction.
\end{abstract}

\section{Introduction}

The celebrated Oberwolfach problem (OP), posed by Ringel in 1967, asks whether $n$ participants at a conference can be seated at $k$ round tables of sizes $m_1, \ldots, m_k$ for several nights in row so that each participant sits next to everybody else exactly once. The assumption is that $n$ is odd and $n=m_1+\ldots+m_k$. In graph-theoretic terms, OP$(m_1,  \ldots, m_k)$ asks whether $K_n$ admits a decomposition into 2-factors, each a disjoint union of cycles of lengths $m_1, \ldots, m_k$. When $n$ is even, the complete graph minus a 1-factor, $K_n-I$, is considered instead \cite{HuaKot}. OP has been solved completely in the case that $m_1=\ldots=m_k$ \cite{AlsHag,AlsSch,Hag,HofSch}, and in many other special cases (for example, for $m_1, \ldots, m_k$ all even \cite{BryDan,Hag}, and for $n$ sufficiently large \cite{GloJoo}), but is in general still open. Most pertinent for this paper are the following results on OP.

\begin{theo}{\rm \cite{AdaBry,Dez,FraHol,FraRos,Mes,SalDra}}\label{thm:OPsmall}
Let $3\le m_1 \le \ldots \le m_k$ be  integers with $m_1+\ldots + m_k \le 100$. Then OP$(m_1, \ldots, m_k)$ has a solution if and only if $(m_1, \ldots, m_k) \not\in \{ (3,3),(4,5),(3,3,5),$ $(3,3,3,3) \}$.
\end{theo}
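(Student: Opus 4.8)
\medskip
\noindent\textbf{Proof proposal.} The plan is to handle the two directions of the equivalence separately and, for the ``if'' direction, to absorb as many partitions as possible into known infinite families so that only a finite, computationally manageable residue remains. Throughout, the relevant host graph is $K_n$ when $n=m_1+\ldots+m_k$ is odd and $K_n-I$ when $n$ is even.

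For the ``only if'' direction I would check individually that none of the four listed tuples admits a $2$-factorization. The tuples $(3,3)$ and $(3,3,3,3)$ correspond exactly to the orders $n=6$ and $n=12$ of the nearly Kirkman triple system problem --- a resolvable decomposition of $K_n-I$ into triangles --- which is known to have solutions only for $n\equiv 0\pmod 6$ with $n\ge 18$ (and for $n=6$, where $K_6-I\cong K_{2,2,2}$, nonexistence is also immediate from a short direct argument). For $(4,5)$ (four $2$-factors of $K_9$) and $(3,3,5)$ (five $2$-factors of $K_{11}$) there is no comparably clean obstruction, so I would certify nonexistence by an exhaustive backtracking enumeration of candidate $2$-factorizations, entirely feasible at these sizes, or by reproducing the ad hoc arguments from the literature.

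For the ``if'' direction the idea is a reduction. Start from the (large but finite) set of partitions of $n\le 100$ into parts of size at least $3$, excluding the four exceptions. First discard everything covered by the complete solution of the uniform problem OP$(m^{\langle k\rangle})$, whose only exceptions in this range are exactly $(3,3)$ and $(3,3,3,3)$; next discard the instances with all parts even; then discard those settled by the known results for two tables and by other structured constructions (one long cycle together with short cycles, cyclic or $1$-rotational solutions built from starters, amalgamation/detachment of $2$-factorizations of smaller complete multigraphs). What remains is a short sporadic list; for each entry I would first look for a solution admitting a vertex-transitive or $1$-rotational automorphism group --- which collapses the problem to a starter-type search --- and, failing that, run a direct ILP/SAT or hill-climbing search, all of which terminate at $n\le 100$.

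The main obstacle is precisely this residual step: for $n$ near the top of the range, partitions that mix many $3$'s and $4$'s with a few larger cycles escape every clean family, and an unstructured search is hopeless. The substance of the cited works lies in organizing the general constructions so that (i) almost every instance is absorbed, (ii) the leftovers are few, and (iii) each leftover either carries enough symmetry to admit a rotational solution or is small enough for exhaustive search. A secondary difficulty is making the nonexistence claims for $(4,5)$ and $(3,3,5)$ fully rigorous without merely trusting a program --- ideally via a verified exhaustive enumeration or a short human-checkable parity/colouring argument.
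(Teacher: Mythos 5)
Note first that the paper does not prove Theorem~\ref{thm:OPsmall} at all: it is imported verbatim from the literature, and the citation list (Franek--Rosa and Franek--Holub--Rosa for small orders, Adams--Bryant for orders 15 and 17, Deza et al.\ for orders 18--40, Salassa et al.\ and Meszka up to order 100) is the ``proof'' as far as this paper is concerned. Those works establish the statement essentially in the way you describe: the uniform, all-even and two-table families plus some structured (e.g.\ $1$-rotational) constructions, with the remaining admissible partitions settled order by order, overwhelmingly by computer search, and the four exceptions ruled out by the classical nonexistence results (NKTS-type arguments for $(3,3)$ and $(3,3,3,3)$, exhaustive checks for $(4,5)$ and $(3,3,5)$). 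So your outline is faithful to the actual route taken in the cited references.

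As a proof, however, your proposal has a genuine gap, and you partly acknowledge it yourself: every decisive step is deferred to a search that is neither carried out nor replaced by an argument. The nonexistence of OP$(4,5)$ and OP$(3,3,5)$ is only promised (``exhaustive backtracking \ldots or reproducing the ad hoc arguments''), and the ``if'' direction rests on the claim that after discarding the known families ``a short sporadic list'' remains. That claim is not substantiated and is in fact where all the difficulty lives: for $n\le 100$ the partitions into parts $\ge 3$ that escape the uniform, all-even and two-table results number in the hundreds of thousands, and no known collection of general constructions absorbs all but a handful --- this is precisely why the cited solutions for orders $18$--$40$ and up to $100$ are large-scale computational efforts rather than corollaries. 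A further (minor) point is that even the clean part of your nonexistence argument leans on another substantial theorem, namely the full determination of nearly Kirkman triple systems ($K_n-I$ into triangle factors exists iff $6\mid n$ and $n\ge 18$), which would itself need to be cited or proved. So the plan is the right plan --- it is the plan the literature executed --- but without the executed case analysis and verified searches it does not constitute a proof of the theorem.
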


\begin{theo}{\rm \cite{BryDan,Gvo,Hag,Tra}}\label{thm:Tra}
Let $m_1$ and $m_2$ be integers such that $3 \le m_1 \le m_2$. Then $OP(m_1,m_2)$ has a solution if and only if $(m_1,m_2) \not\in \{ (3,3),(4,5) \}$.
\end{theo}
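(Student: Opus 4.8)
The forward (``only if'') direction requires nothing new: the pairs $(3,3)$ and $(4,5)$ satisfy $m_1+m_2\le 9\le 100$, so the non-existence of solutions to OP$(3,3)$ and OP$(4,5)$ is already contained in Theorem~\ref{thm:OPsmall} (each can also be eliminated by a short direct argument). The content is the reverse direction, and the plan is to construct an $F$-factorization of $K_n$ when $n=m_1+m_2$ is odd and of $K_n-I$ when $n$ is even, where $F=C_{m_1}\cup C_{m_2}$. Historically this was assembled from partial results of H\"aggkvist, Gvozdjak and Bryant--Danziger and then completed by Traetta, and I would organize the argument the same way: dispose of the uniform case, and then split the case $m_1<m_2$ according to the parities of $m_1,m_2$.

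In the uniform case $m_1=m_2=m$ we have $m\ge 4$, since $(3,3)$ is excluded; and because the uniform Oberwolfach problem is completely solved, with only OP$(3,3)$ and OP$(3,3,3,3)$ as exceptions, OP$(m,m)$ has a solution. So assume $m_1<m_2$. If $m_1$ and $m_2$ are both even, then $n$ is even and $F$ is a bipartite $2$-factor of $K_n$ with both parts of size $n/2$; an $F$-factorization of $K_n-I$ is then guaranteed by the results of Bryant and Danziger on bipartite $2$-factorizations of $K_n-I$ (with any residual small pairs closed off by Traetta), and this parity class yields no exceptions.

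If $m_1$ and $m_2$ have opposite parity, then $n$ is odd and one wants an $F$-factorization of $K_n$ into $(n-1)/2$ copies of $F$. The natural tool is a rotational construction via the method of differences: identify $V(K_n)$ with $\ZZ_n$ and look for a single ``starter'' copy of $F$ whose multiset of edge-lengths, developed under the rotation $x\mapsto x+1$, hits each nonzero length of $\ZZ_n$ with the correct multiplicity, so that the $\ZZ_n$-translates of the starter constitute the factorization. This reduces the problem to finding compatible ``$\rho$-type'' labelings of $C_{m_1}$ and $C_{m_2}$ simultaneously; such labelings exist for every admissible opposite-parity pair other than $(4,5)$ (Gvozdjak, and then Traetta), the finitely many small residual cases being verified directly.

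If $m_1$ and $m_2$ are both odd, then $n$ is even and one needs an $F$-factorization of $K_n-I$ into $n/2-1$ copies of $F$; this is the delicate case and the main obstacle. One feasible route is a $1$-rotational construction over $\ZZ_{n-1}\cup\{\infty\}$, with the fixed vertex $\infty$ absorbing the deleted $1$-factor; a more flexible one is to pass to a frame-type intermediate structure and apply a recursive composition that builds a solution for $(m_1,m_2)$ out of solutions for strictly smaller parameters of the same shape, the recursion being seeded by an explicitly verified finite list of base cases, some of which are checked by computer. The crux --- controlling the length usage of two odd cycles simultaneously with their interaction with the deleted $1$-factor $I$, and confirming that $(3,3)$ is the only exception that survives --- is exactly the heart of Traetta's complete solution, and is where essentially all of the difficulty lies.
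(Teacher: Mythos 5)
This statement is not proved in the paper at all: it is an imported result, quoted with the citations \cite{BryDan,Gvo,Hag,Tra}, so the paper's ``proof'' is precisely an appeal to those works. Your write-up is a faithful roadmap of how that literature establishes the theorem (uniform case from the solved uniform OP, both-even case via Bryant--Danziger's bipartite $2$-factorizations, mixed-parity case via Gvozdjak-type rotational labelings, both-odd case and all residual pairs via Traetta, with the exceptions $(3,3)$ and $(4,5)$ already covered by Theorem~\ref{thm:OPsmall}), so it takes essentially the same route as the paper; just be aware that, as written, it is an outline delegating every substantive step to the same cited sources rather than an independent argument.
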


The directed Oberwolfach problem, OP$^\ast(m_1,\ldots,m_k)$, was introduced in \cite{BurSaj}. It asks whether $n$ participants can be seated at $k$ round tables of sizes $m_1,  \ldots, m_k$ (where $n=m_1+\ldots+m_k$)
for several nights in row so that each person sits {\em to the right} of everybody else exactly once. Such a seating is equivalent to a decomposition of $K_n^*$, the complete symmetric digraph of order $n$, into subdigraphs isomorphic to a disjoint union of directed $k$ cycles of lengths $m_1, \ldots, m_k$. The directed Oberwolfach problem with uniform cycle lengths, that is, with $m_1=\ldots=m_k=m$, is denoted OP$^\ast(n;m)$.
The solution to OP$^\ast(n;m)$ has been completed very recently, as seen below.

\begin{theo}{\rm \cite{AdaBry1,BenZha,BerGerSot,BurFraSaj,BurSaj,Lac,Til}}\label{thm:Kn*}
Let $m \ge 2$ and $n \ge 2$. Then OP$^\ast(n;m)$ has a solution if and only if $m|n$ and $(n, m) \not\in\{(4,4),(6,6),(6,3)\}$.
\end{theo}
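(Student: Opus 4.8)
This statement is a compilation of results scattered across the literature, so ``proving'' it means assembling those pieces; here is how I would organize the argument. \textbf{Necessity.} If $K_n^\ast$ decomposes into spanning subdigraphs, each a disjoint union of directed $m$-cycles, then each such subdigraph partitions the $n$ vertices into blocks of size $m$, so $m \mid n$. The three genuine exceptions must be excluded individually. For $(n,m) \in \{(4,4),(6,6)\}$ this is exactly the claim that $K_4^\ast$ and $K_6^\ast$ have no decomposition into directed Hamilton cycles, which I would quote from Tillson's work (its proof is a short parity/counting argument for $n=4$ and a more elaborate case analysis for $n=6$). For $(n,m)=(6,3)$ I would supply an ad hoc argument --- a bounded exhaustive search, or a counting argument on the ways a fixed pair of vertices can lie on a common directed triangle --- showing $K_6^\ast$ has no decomposition into directed 2-factors each consisting of two directed triangles.

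\textbf{The two extreme cases.} For $m=2$: a directed 2-factor of $K_n^\ast$ all of whose cycles have length $2$ is precisely a perfect matching of the underlying $K_n$ with every edge doubled into a digon; since $K_n$ is $1$-factorable for even $n$, its $n-1$ one-factors lift to $n-1$ such directed 2-factors, so OP$^\ast(n;2)$ has a solution for every even $n$, with no exception. For $m=n$ this is Tillson's theorem, that $K_n^\ast$ has a directed Hamilton cycle decomposition if and only if $n \notin \{4,6\}$, which I would quote.

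\textbf{The general case $2<m<n$.} Write $n=qm$ with $q \ge 2$, fix a partition $V(K_n^\ast)=V_1 \cup \cdots \cup V_q$ into parts of size $m$, and split the arc set as $K_n^\ast = \bigl(\bigcup_{i=1}^q K_m^\ast[V_i]\bigr) \cup K^\ast_{q\times m}$, where $K^\ast_{q\times m}$ is the complete symmetric equipartite digraph on these parts. The plan has two steps: (i) decompose $\bigcup_i K_m^\ast[V_i]$ into $m-1$ directed 2-factors of $K_n^\ast$, each consisting of $q$ directed $m$-cycles, by taking a directed Hamilton cycle decomposition of $K_m^\ast$ on each $V_i$ (Tillson, valid whenever $m \notin \{4,6\}$) and running the $q$ decompositions in parallel; and (ii) decompose $K^\ast_{q\times m}$ into directed 2-factors of $K_n^\ast$, each consisting of $q$ directed $m$-cycles. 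Step (ii) is the technical core: it calls for a resolvable $\vec C_m$-decomposition of the complete symmetric equipartite digraph, which I would attack with difference methods over $\ZZ_n$ (or another suitable group), explicit base factors, and induction on $q$, borrowing frame-type ideas from the undirected Oberwolfach literature where convenient. Steps (i) and (ii) together yield exactly $(m-1)+(q-1)m=n-1$ directed 2-factors, as required.

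\textbf{The main obstacle.} The construction above collapses precisely when $m \in \{4,6\}$, since Tillson's theorem then cannot provide a directed Hamilton cycle decomposition of $K_m^\ast$ inside a part. These cases, together with a handful of small orders at the base of the induction in step (ii), demand dedicated constructions --- for instance, decomposing the $q$ copies of $K_4^\ast$ jointly with a carefully chosen portion of the equipartite arcs into the desired 2-factors, or invoking recent purpose-built results for small cycle lengths. I expect this $m \in \{4,6\}$ analysis, and the verification of the base cases in step (ii), to absorb essentially all of the difficulty (and to account for the most recent of the cited references); by contrast, $m=3$ and general $m \ge 5$ should be comparatively tractable via the difference-method machinery.
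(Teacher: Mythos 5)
The statement you were asked to prove is not proved in the paper at all: Theorem~\ref{thm:Kn*} is quoted as a known result, assembled from the seven cited references (Tillson for the Hamiltonian case $m=n$, Bermond--Germa--Sotteau, Bennett--Zhang and Adams--Bryant for small cycle lengths, Burgess--\v{S}ajna and Burgess--Franceti\'{c}--\v{S}ajna for the odd case, and Lacaze-Masmonteil for the recent completion of the even case). So there is no internal proof to compare your attempt against; the only fair comparison is with the literature the theorem cites.

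Measured against that, your outline is a sensible roadmap and gets the easy parts right: the necessity of $m \mid n$, the $m=2$ case via a $1$-factorization of $K_n$ with each edge replaced by a digon, the reduction of $m=n$ to Tillson's theorem, and the correct count $(m-1)+(q-1)m=n-1$ in the two-step plan for $2<m<n$. But as a proof it has a genuine gap exactly where you flag ``the technical core'': step (ii), a resolvable $\vec{C}_m$-factorization of the complete symmetric equipartite digraph, together with the $m\in\{4,6\}$ cases (where the within-part Hamilton decompositions of $K_m^\ast$ do not exist), is not an incidental base-case chore to be handled by generic difference methods --- it is essentially the entire content of the cited papers, and for even $m$ it was only settled very recently. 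Announcing that this will be attacked ``with difference methods over $\ZZ_n$, explicit base factors, and induction on $q$'' does not establish it; nothing in your sketch shows such base factors exist for all admissible $(q,m)$, nor how the parallel-classes condition (resolvability) is maintained. In short, your proposal is an accurate annotated bibliography of how the theorem is known to be true, consistent in spirit with the sources the paper cites, but it is not a proof: the steps you defer are precisely the ones that carry all the difficulty.
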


For $n$ odd, it is easy to see that a solution for OP$(m_1,\ldots,m_k)$ gives rise to a solution for OP$^\ast(m_1,\ldots,m_k)$; we simply take two copies of each 2-factor and direct the two copies of each cycle in two different ways. We thus have the following immediate corollary to Theorems~\ref{thm:OPsmall} and \ref{thm:Tra}.

\begin{cor}{\rm \cite{AdaBry,Dez,FraHol,FraRos,SalDra,Tra}\label{cor:OP}}
Let $3\le m_1 \le \ldots \le m_k$ be  integers such that $n=m_1+\ldots + m_k$ is odd. If $(m_1, \ldots, m_k) \not\in \{ (4,5),(3,3,5) \}$, and $n < 60$ or $k=2$,
then OP$^\ast(m_1,\ldots,m_k)$ has a solution.
\end{cor}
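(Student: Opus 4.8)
The plan is to derive the corollary directly from the undirected Oberwolfach results of Theorems~\ref{thm:OPsmall} and~\ref{thm:Tra}, via the ``double and orient'' reduction noted just above the statement: when $n$ is odd, a solution to OP$(m_1,\ldots,m_k)$ yields a solution to OP$^\ast(m_1,\ldots,m_k)$.

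First I would spell out that reduction. Let $\{F_1,\ldots,F_{(n-1)/2}\}$ be a decomposition of $K_n$ into 2-factors, each a vertex-disjoint union of cycles of lengths $m_1,\ldots,m_k$. Orienting every cycle of $F_i$ in one consistent cyclic direction produces a directed 2-factor $\vec{F}_i$ of $K_n^\ast$ that is a disjoint union of directed cycles of lengths $m_1,\ldots,m_k$; write $\rev{F}_i$ for the reverse orientation. Because each edge $\{u,v\}$ of $K_n$ gives rise to exactly the two arcs $(u,v)$ and $(v,u)$ of $K_n^\ast$, the $n-1$ directed 2-factors $\vec{F}_1,\rev{F}_1,\ldots,\vec{F}_{(n-1)/2},\rev{F}_{(n-1)/2}$ partition the arc set of $K_n^\ast$, hence form a solution to OP$^\ast(m_1,\ldots,m_k)$. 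So it remains only to produce a solution to the undirected problem under the stated hypotheses.

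Next I would handle the two cases of the hypothesis ``$n<60$ or $k=2$'' separately. If $n<60$, then $m_1+\ldots+m_k\le 100$ and Theorem~\ref{thm:OPsmall} applies, so OP$(m_1,\ldots,m_k)$ has a solution unless $(m_1,\ldots,m_k)$ is one of $(3,3),(4,5),(3,3,5),(3,3,3,3)$; since $n$ is odd, the even-sum tuples $(3,3)$ and $(3,3,3,3)$ cannot occur, leaving exactly $(4,5)$ and $(3,3,5)$, which are excluded by hypothesis. If $k=2$, then Theorem~\ref{thm:Tra} gives a solution to OP$(m_1,m_2)$ unless $(m_1,m_2)\in\{(3,3),(4,5)\}$; again $(3,3)$ is ruled out by the parity of $n$, and $(4,5)$ is excluded by hypothesis. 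In both cases the undirected problem is solvable, so the reduction above completes the proof. Note that the hypothesis $m_1\ge 3$ is precisely what allows us to invoke Theorems~\ref{thm:OPsmall} and~\ref{thm:Tra}.

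Since every step is routine, I do not expect a genuine obstacle here; the only point requiring any care is the parity bookkeeping, namely checking that once the even-sum exceptional tuples of the two undirected theorems are discarded, what remains is exactly the pair $(4,5)$ and the triple $(3,3,5)$ appearing in the statement.
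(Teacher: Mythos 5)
Your proposal is correct and is essentially the paper's own argument: the paper derives this corollary immediately from Theorems~\ref{thm:OPsmall} and \ref{thm:Tra} via the same ``take two oppositely oriented copies of each 2-factor'' reduction for odd $n$, with the same parity bookkeeping eliminating $(3,3)$ and $(3,3,3,3)$ and the hypothesis excluding $(4,5)$ and $(3,3,5)$.
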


Very little else is known about the non-uniform case; as far as we know, the only other previous results are solutions to OP$^\ast(4,5)$, OP$^\ast(3,3,5)$, and OP$^\ast(2^{\langle b \rangle},3)$ when $2b+3 \equiv 1,3$ or $7 \pmod{8}$ by Shabani and the second author \cite{ShaSaj}. The latter of these results will be subsumed by the much more general results of this paper.

In this paper, we describe a method for constructing a solution to OP$^\ast(m_1,\ldots,m_k)$ given a solution to OP$^\ast(m_1,\ldots,m_\ell)$, for some $\ell<k$, if certain conditions on $m_1,\ldots,m_k$ are satisfied. Such an extension is the simplest in the bipartite case; that is, when all cycle lengths are even.

\begin{theo}\label{thm:main-even}
Let $\ell$ and $k$ be integers such that $1 \le \ell < k$, and let $m_1,\ldots, m_k$ be even positive integers such that  $m_1+\ldots+m_{\ell}=m_{\ell+1}+ \ldots + m_k$.

If OP$^\ast(m_1,\ldots,m_{\ell})$ and OP$^\ast(m_{\ell+1},\ldots,m_k)$ both have solutions, then OP$^\ast(m_1,\ldots,m_k)$ has a solution.
\end{theo}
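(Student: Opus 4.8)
The plan is to exploit a balanced bipartition of $V(K_n^\ast)$, reducing the problem to a decomposition of the ``bipartite part'' of $K_n^\ast$. Since the $m_i$ are even with $m_1+\ldots+m_\ell=m_{\ell+1}+\ldots+m_k=:s$, we have $n=2s$; partition $V(K_n^\ast)=A\du B$ with $|A|=|B|=s$. The arc set of $K_n^\ast$ then splits into three arc-disjoint parts: $K_A^\ast$, $K_B^\ast$, and the complete symmetric bipartite digraph $K_{A,B}^\ast$ consisting of all arcs with one endpoint in $A$ and the other in $B$. Every vertex of $K_A^\ast$ has in-degree and out-degree $s-1$, while in a directed $2$-factor it has in-degree and out-degree $1$, so a solution of OP$^\ast(m_1,\ldots,m_\ell)$, placed on $A$, decomposes $K_A^\ast$ into exactly $s-1$ directed $2$-factors $F_1,\ldots,F_{s-1}$, each a disjoint union of directed cycles of lengths $m_1,\ldots,m_\ell$. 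Symmetrically, a solution of OP$^\ast(m_{\ell+1},\ldots,m_k)$ on $B$ decomposes $K_B^\ast$ into $G_1,\ldots,G_{s-1}$, each a disjoint union of directed cycles of lengths $m_{\ell+1},\ldots,m_k$. Then $F_1\du G_1,\ldots,F_{s-1}\du G_{s-1}$ are $s-1$ spanning directed $2$-factors of $K_n^\ast$ of the required cycle type $(m_1,\ldots,m_k)$, and together they use precisely the arcs of $K_A^\ast$ and $K_B^\ast$.

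The remaining --- and only nontrivial --- task is to decompose $K_{A,B}^\ast$ into $s$ spanning directed $2$-factors, again of cycle type $(m_1,\ldots,m_k)$; together with the $s-1$ factors above this gives the $2s-1=n-1$ directed $2$-factors that a solution of OP$^\ast(m_1,\ldots,m_k)$ requires. The plan is to use a ``twisted'' pair of $1$-factorizations. Identify $A=B=\ZZ_s$ and fix a permutation $\rho$ of $\ZZ_s$ whose cycle type is $(m_1/2,\ldots,m_k/2)$ (such $\rho$ exists because each $m_i/2\ge 1$ and $m_1/2+\ldots+m_k/2=s$). For each $d\in\ZZ_s$, let $L_d=\{(i,\rho(i)-d):i\in\ZZ_s\}$ be a set of arcs from $A$ to $B$, and $R_d=\{(j,j+d):j\in\ZZ_s\}$ a set of arcs from $B$ to $A$. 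Since $i\mapsto\rho(i)-d$ and $j\mapsto j+d$ are bijections of $\ZZ_s$, each $L_d$ and each $R_d$ is a perfect matching, and $L_d,L_{d'}$ (respectively $R_d,R_{d'}$) are arc-disjoint when $d\ne d'$; hence $\{L_d:d\in\ZZ_s\}$ and $\{R_d:d\in\ZZ_s\}$ are $1$-factorizations of the $A\to B$ and $B\to A$ arc sets of $K_{A,B}^\ast$. Consequently the subdigraphs $H_d:=L_d\cup R_d$, $d\in\ZZ_s$, decompose $K_{A,B}^\ast$, and in each $H_d$ every vertex has in-degree and out-degree $1$, so $H_d$ is a disjoint union of directed cycles spanning $A\du B$.

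To pin down the cycle type of $H_d$: following arcs from a vertex $i\in A$ one moves to $\rho(i)-d\in B$ and then to $(\rho(i)-d)+d=\rho(i)\in A$, so the $A$-vertices of a directed cycle of $H_d$ are exactly the vertices of one cycle of $\rho$, and a cycle of $\rho$ of length $t$ yields a directed cycle of $H_d$ of length $2t$. Since $\rho$ has cycle type $(m_1/2,\ldots,m_k/2)$, each $H_d$ has cycle type $(m_1,\ldots,m_k)$, completing the construction. (When some $m_i=2$, the corresponding cycle of $\rho$ is a fixed point, and $H_d$ contains the directed $2$-cycle on $\{i,\rho(i)-d\}$, a legitimate directed cycle of $K_{A,B}^\ast$.)

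I expect the crux to be exactly the simultaneous constraint on $K_{A,B}^\ast$: the left matchings $L_d$ must form a $1$-factorization while, at the same time, each composition $R_d\circ L_d$ realizes a single prescribed cycle type. Merely decomposing $K_{A,B}^\ast$ into spanning directed $2$-factors is easy; the delicate point is controlling the cycle type of every factor at once. Twisting the left matchings by one fixed permutation $\rho$, so that $R_d\circ L_d=\rho$ for every $d$, is what makes this possible, after which the argument reduces to routine bookkeeping about arc counts and spanning subdigraphs.
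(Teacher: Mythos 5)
Your proposal is correct and is essentially the paper's own argument: both place the two given factorizations on the two halves to use up the arcs inside $A$ and inside $B$, and then cover the bipartite part by rotating a single $(\vec{C}_{m_1},\ldots,\vec{C}_{m_k})$-factor (your $H_d=L_d\cup R_d$ is exactly the image of the starter $H_0$ under the $d$-th power of a cyclic shift of $B$ fixing $A$ pointwise, which is the paper's construction with its alternating cycles $C^{(i)}$ written in permutation language). The only cosmetic difference is that you describe the rotated factors via the twisted matchings and the return map $R_d\circ L_d=\rho$, while the paper exhibits the starter cycles explicitly.
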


More sophisticated conditions need to be imposed when the directed 2-factor contains odd cycles or the extension does not exactly double the number of vertices; see Proposition~\ref{prop:main} and Corollary~\ref{cor:main}. This more general approach enables us to extend a solution for OP$^\ast(m_1,\ldots,m_\ell)$ into solutions for OP$^\ast(m_1,\ldots,m_{\ell},t)$ and OP$^\ast(m_1,\ldots,m_{\ell},2^{\langle \frac{t}{2} \rangle})$, as elaborated below.

\begin{theo}\label{the:main-ext}
Let $\ell$ and $m_1,\ldots, m_{\ell}$ be positive integers such that  $m_i \ge 2$ for $i=1,\ldots, \ell$. Let $s=m_1+\ldots+m_{\ell}$ and let $t$ be an integer, $t>s$. Furthermore, let $a$ be the number of odd integers in the multiset $\{ \!\! \{ m_1,\ldots,m_{\ell} \} \!\! \}$, and assume that $a \le 2\lfloor \frac{t}{2} \rfloor -s$.

If OP$^\ast(m_1,\ldots,m_{\ell})$ has a solution, then the following also have a solution:
\begin{enumerate}[(1)]
\item OP$^\ast(m_1,\ldots,m_{\ell},t)$; and
\item if $t$ is even, OP$^\ast(m_1,\ldots,m_{\ell},2^{\langle \frac{t}{2} \rangle})$.
\end{enumerate}
\end{theo}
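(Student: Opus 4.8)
The plan is to derive this from the more general machinery (Proposition~\ref{prop:main} and Corollary~\ref{cor:main}) announced in the introduction, by setting up the right instance of the recursive construction. The key observation is that both OP$^\ast(m_1,\ldots,m_\ell,t)$ and OP$^\ast(m_1,\ldots,m_\ell,2^{\langle t/2\rangle})$ are obtained from OP$^\ast(m_1,\ldots,m_\ell)$ by adjoining a new set $B$ of vertices of size $t$ (so $|B|=t$, $|A|=s$, and the new complete symmetric digraph has order $n=s+t$), and then prescribing how the arcs within $B$ and between $A$ and $B$ are to be covered. So first I would recall the structure of the recursive construction: a solution to OP$^\ast(m_1,\ldots,m_\ell)$ on vertex set $A$ decomposes $K_s^\ast$ into directed 2-factors $F_1,\ldots,F_r$ (where $r=s-1$); we must extend each $F_i$ to a directed 2-factor of $K_n^\ast$ by adding a collection of directed cycles on the vertices of $B$ (plus possibly absorbing some vertices of $B$ into the existing cycles), and then separately handle the remaining arcs — those lying entirely in $B$, together with the "leftover" arcs between $A$ and $B$ — using additional directed 2-factors all of the prescribed type.

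The heart of the argument is a counting/packing step. For part~(1), the target 2-factor type on $n=s+t$ vertices is $(m_1,\ldots,m_\ell,t)$: one giant directed $t$-cycle on a copy of $B$, together with the $\ell$ small cycles. For part~(2) with $t$ even, the type is $(m_1,\ldots,m_\ell,2^{\langle t/2\rangle})$: $t/2$ digons on $B$ plus the small cycles. In each case I would: (i) take the $r=s-1$ factors from the given solution on $A$ and extend each by a single directed $t$-cycle (resp.\ $t/2$ digons) on $B$, using $t-1$ carefully chosen directed $t$-cycles (resp.\ $1$-factorizations into digons) on $B$ so that these $t-1$ extensions, restricted to $B$, decompose $K_t^\ast$ — here we need $t-1 \ge s-1$, i.e.\ $t>s$, plus the fact that $K_t^\ast$ admits a decomposition into Hamiltonian directed cycles (resp.\ into near-perfect matchings of digons), which holds by Theorem~\ref{thm:Kn*}; (ii) cover the $2st$ arcs between $A$ and $B$ together with the remaining arcs inside $B$. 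Step (ii) is where the odd cycles cause trouble: an odd directed cycle of length $m_i$ visiting both $A$ and $B$ must cross the $A$–$B$ cut an even number of times, which constrains how many vertices of $B$ each odd small cycle can absorb, and the inequality $a \le 2\lfloor t/2\rfloor - s$ is precisely the condition that makes the resulting system of congruences/inequalities solvable. Concretely, each of the $\ell$ small cycles gets "stretched" by inserting some vertices of $B$; the number inserted must have the right parity for odd cycles, and the total over a suitable set of factors must exhaust all $A$–$B$ arcs and all remaining $B$–$B$ arcs; the stated bound guarantees enough room.

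The main obstacle I anticipate is step (ii): organizing the stretching of the small cycles and the residual cycles on $B$ into full directed 2-factors of the correct type, so that every arc of $K_n^\ast$ is used exactly once. This requires (a) a clean way to insert a controlled number of $B$-vertices into a directed cycle while consuming a prescribed set of $A$–$B$ and $B$–$B$ arcs — likely via a "detour"/"amalgamation" lemma; (b) a bookkeeping argument showing the arc counts match up globally, which is where $a \le 2\lfloor t/2\rfloor - s$ enters to resolve the parity obstruction from odd cycles and to ensure $2\lfloor t/2\rfloor - s$ is large enough to accommodate all $a$ odd cycles each needing at least one "parity-correcting" extra vertex; and (c) verifying the small sporadic cases where $t$ is just barely larger than $s$, since there the slack is minimal. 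I would expect the bulk of the proof to consist of invoking Corollary~\ref{cor:main} with an explicit choice of parameters and then checking that the hypothesis $a \le 2\lfloor t/2\rfloor - s$ implies the hypotheses of that corollary; the genuinely new content is confined to exhibiting the right decomposition of the arcs incident with or inside $B$, which reduces to known results on directed cycle decompositions and 1-factorizations of $K_t^\ast$.
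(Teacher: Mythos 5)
Your high-level plan --- join a new $t$-vertex part $Y$ to the $s$-vertex part carrying the given solution, pair the $s-1$ old factors with 2-factors living inside $Y$, and cover the cross arcs plus the leftover arcs inside $Y$ with further factors in which the small cycles absorb $Y$-vertices, with the bound $a \le 2\lfloor t/2\rfloor - s$ resolving the parity constraint coming from odd cycles --- matches the paper's strategy of instantiating Corollary~\ref{cor:main}. However, your step (i) contains a structural error: you propose to decompose all of $K_t^\ast$ into $t-1$ Hamilton directed cycles (resp.\ digon factors) via Theorem~\ref{thm:Kn*} and use these as the $Y$-parts of 2-factors. Only $s-1$ of them can ever be used this way: in any 2-factor of $K_{s+t}^\ast$ whose $t$-cycle (resp.\ digon collection) spans all of $Y$, the remaining cycles must lie entirely inside the old part, and all of those arcs are exhausted by the given solution, so the surplus $t-s$ Hamilton factors of $K_t^\ast$ cannot be completed to 2-factors of the required type. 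The paper instead splits $K_t^\ast = D' \oplus D''$ where $D'$ is a circulant on exactly $s-1$ differences admitting a $\vec{C}_t$- (resp.\ $\vec{C}_2$-) factorization (Lemmas~\ref{lem:long1}, \ref{lem:2-cycles1}), and the complement $\dci(t;S)$ with $|S|=t-s$ is covered by $t$ rotated copies of a single starter factor in which \emph{every} cycle, including the new one, crosses between the two parts; this is exactly Condition~(2) of Corollary~\ref{cor:main} with $s_i=\lfloor m_i/2\rfloor$, $t_i=m_i-2s_i$, and $s_{\ell+1}=r=(s+a)/2$, where the hypothesis $a\le 2\lfloor t/2\rfloor -s$ is what gives $r\le\lfloor t/2\rfloor$.

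Beyond that, the substance of the proof is missing rather than sketched: what you defer to a ``detour/amalgamation lemma'' and ``bookkeeping'' is precisely the content of Lemmas~\ref{lem:long2} and \ref{lem:2-cycles2} (the $S$-orthogonal $(\vec{P}_1^{\langle a\rangle},\vec{P}_{t-s-a})$- and $(\vec{P}_1^{\langle a\rangle},\vec{C}_2^{\langle (t-s-a)/2\rangle})$-subdigraphs), which occupy the bulk of Sections~6 and 7 and require delicate case-by-case constructions. Finally, the exceptional cases are not where you locate them: they are not ``$t$ barely larger than $s$'' but $s\in\{3,4\}$ with $t$ even (where Lemmas~\ref{lem:long1}/\ref{lem:long2} do not apply), and they need the separate constructions of Lemma~\ref{lem:long-special} and the sporadic solutions of Lemma~\ref{lem:special}; also note Theorem~\ref{thm:Kn*} has exceptions such as $(4,4)$ and $(6,6)$, so even your appeal to it would need care. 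As it stands the proposal identifies the right framework but neither a correct reduction to Corollary~\ref{cor:main} nor the technical constructions that make the reduction work.
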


In particular, this approach allows us to obtain an almost-complete solution to the directed Oberwolfach problem with two tables.

\begin{theo}\label{thm:main2}
Let $m_1$ and $m_2$ be integers such that $2 \le m_1 \le m_2$. Then OP$^\ast(m_1,m_2)$ has a solution if and only if $(m_1,m_2) \ne (3,3)$, with a possible exception in the case that $m_1 \in \{ 4,6 \}$, $m_2$ is even, and $m_1+m_2 \ge 14$.
\end{theo}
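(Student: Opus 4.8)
The plan is to organize the argument around the parity of $n=m_1+m_2$ and the already-available machinery. First I would dispose of the odd case: if $m_1+m_2$ is odd, then $m_1 \ne m_2$, and Corollary~\ref{cor:OP} with $k=2$ (noting $(m_1,m_2) \ne (4,5)$ must be checked separately) gives a solution; the case $(4,5)$ itself is cited as solved in the paragraph preceding the excerpt's end, namely the explicit solution to OP$^\ast(4,5)$ from \cite{ShaSaj}. So from here on I assume $m_1+m_2$ is even, i.e.\ $m_1$ and $m_2$ have the same parity. The diagonal case $m_1=m_2=m$ is handled immediately by Theorem~\ref{thm:Kn*} (uniform case with $k=2$): OP$^\ast(m,m)$ has a solution iff $m \ne 3$, which also pins down $(3,3)$ as the genuine exception. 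It remains to treat $2 \le m_1 < m_2$ with $m_1 \equiv m_2 \pmod 2$.

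The heart of the proof is then a case split on the parity common to $m_1$ and $m_2$. If both are odd, then since $m_1 < m_2$ the diagonal is avoided, and I would apply Theorem~\ref{the:main-ext} with $\ell=1$, $m_1$ the smaller length, $s=m_1$, and $t=m_2$: here $a=1$ (one odd entry), and the hypothesis $a \le 2\lfloor t/2 \rfloor - s$ reads $1 \le (m_2-1)-m_1$, i.e.\ $m_2 \ge m_1+2$, which holds automatically for distinct odds. The base case OP$^\ast(m_1)$ — a single directed cycle $K_{m_1}^\ast$ decomposed into Hamilton directed cycles — has a solution for all $m_1 \ge 2$ except $m_1 \in \{4,6\}$ by Theorem~\ref{thm:Kn*}; but for $m_1$ odd there is no obstruction, so we get OP$^\ast(m_1,m_2)$ for all odd $m_1 < m_2$. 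If both $m_1,m_2$ are even, I would first try the same route: base case OP$^\ast(m_1)$ exists precisely when $m_1 \notin \{4,6\}$, so for even $m_1 \ge 8$ (and $m_1=2$), Theorem~\ref{the:main-ext}(1) with $t=m_2$, $s=m_1$, $a=0$, and condition $0 \le m_2 - m_1$ (true) yields OP$^\ast(m_1,m_2)$. The leftover even cases are exactly $m_1 \in \{2,4,6\}$ — and $m_1=2$ is already covered — so the possible exceptions collapse to $m_1 \in \{4,6\}$ with $m_2$ even.

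For those two stubborn families I would need a different seed. When $m_1 \in \{4,6\}$ and $m_2$ is even, one can instead view $m_2$ as the quantity being built up: apply Theorem~\ref{the:main-ext} (or more directly Theorem~\ref{thm:main-even}) with the roles arranged so that the base table has length $m_2$ and we adjoin a cycle of length $m_1$; but $a=0$ forces the condition $m_1 \le 2\lfloor m_1/2\rfloor = m_1$, which is fine, yet the base case OP$^\ast(m_2)$ again fails when $m_2 \in \{4,6\}$. This is precisely where small sporadic values must be treated by hand — in particular $(4,4)$, $(4,6)$, $(6,6)$, $(4,8)$, $(4,10)$, $(6,8)$, $(6,10)$, $(4,12)$, $(6,12)$ up to $m_1+m_2 \le 12$ or $13$, which the abstract asserts are all solvable via the stated classification for $n \le 13$ — so those are resolved by explicit directed 2-factorizations. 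What genuinely resists is $m_1 \in \{4,6\}$, $m_2$ even, $m_1+m_2 \ge 14$: here the recursive engine wants a valid seed among $\{$OP$^\ast(4)$, OP$^\ast(6)$, OP$^\ast(4,4)$, OP$^\ast(6,6)$, OP$^\ast(4,6)\}$, and the first two simply do not exist while the latter three, even when they exist, double the vertex count incorrectly for an arbitrary even $m_2$. Bridging this gap — e.g.\ by a bespoke construction that builds OP$^\ast(4,m_2)$ and OP$^\ast(6,m_2)$ directly, perhaps splitting $m_2 = m_2' + m_2''$ and merging solutions, or by an ad hoc rotational/1-rotational scheme on $\ZZ_n$ — is the main obstacle, and I expect the paper to leave exactly this sliver as the "possible exception."

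\bigskip

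**Remark on where the difficulty concentrates.** Every even cycle length $m_1 \ge 8$ works as a recursive seed, and every odd length works, so the entire residual difficulty is concentrated in the failure of OP$^\ast(4)$ and OP$^\ast(6)$ (equivalently, the exceptions $(4,4)$ and $(6,6)$ of Theorem~\ref{thm:Kn*}) propagating through the recursion. The likely resolution for the solved portion is to combine: (i) Theorem~\ref{the:main-ext} for all cases with a legitimate seed; (ii) the $n \le 13$ classification for small sporadics including $(4,8),(4,10),(6,8)$ etc.; and (iii) possibly a few additional direct constructions to push $(4,m_2)$ and $(6,m_2)$ as far as the parity/counting conditions of Proposition~\ref{prop:main} permit — leaving the clean statement that only $(4,m_2)$ and $(6,m_2)$ with $m_2$ even and $m_1+m_2 \ge 14$, together with the true exception $(3,3)$, remain in question.
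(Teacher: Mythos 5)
Your proposal follows essentially the same route as the paper: odd sum via Corollary~\ref{cor:OP} (with $(4,5)$ done by an explicit construction), the diagonal $m_1=m_2$ via Theorem~\ref{thm:Kn*} (which also gives the necessity of excluding $(3,3)$), the off-diagonal cases with $m_1\notin\{4,6\}$ via Theorem~\ref{the:main-ext}(1) with $\ell=1$, $s=m_1$, $t=m_2$ (this is exactly the paper's Corollary~\ref{cor:ext}(a)), and the small sporadic even cases $(4,6)$, $(4,8)$ by direct constructions, leaving precisely the stated possible exceptions. Two caveats, though, about the argument as written. First, the case $m_1=2$ with $m_2$ odd falls through the cracks: Corollary~\ref{cor:OP} requires $m_1\ge 3$ (it comes from the undirected problem, which has no $2$-cycles), so your ``odd sum'' step does not cover it, and everything after that point assumes $m_1+m_2$ even. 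The fix is immediate and already in your toolkit --- the same application of Theorem~\ref{the:main-ext}(1) with $s=2$, $t=m_2$, $a=0$ works for every $m_2>2$ regardless of parity, which is how the paper handles all of $2\le m_1<m_2$, $m_1\notin\{4,6\}$ in one stroke --- but as stated your case analysis misses it.

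Second, for the solvable cases with $m_1\in\{4,6\}$ and $m_1+m_2\le 13$ you appeal to ``the stated classification for $n\le 13$.'' In the paper's logical order that classification (Theorem~\ref{thm:small}) is proved \emph{after} and partly \emph{by means of} Theorem~\ref{thm:main2}, so citing it here would be circular; the correct references are Theorem~\ref{thm:Kn*} for $(4,4)$ and $(6,6)$ and the explicit $1$-rotational/ad hoc constructions of Lemma~\ref{lem:special} for $(4,5)$, $(4,6)$, and $(4,8)$ (the last of which needs the non-symmetric solution, since no base-$1$ $1$-rotational starter can exist there). With those two repairs your argument coincides with the paper's proof.
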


This paper is organized as follows. In Section 2 we give the necessary terminology and other prerequisites, and in Section 3 we solve some special small cases of the problem that will be required to fill in the gaps in later, more general constructions. In the next two sections we introduce the main recursive approach to constructing solutions to larger cases of OP$^\ast$ from solutions to smaller cases: for bipartite directed 2-factors (Theorem~\ref{thm:main-even}) in Section 4, and for general directed 2-factors (Proposition~\ref{prop:main} and Corollary~\ref{cor:main}) in Section 5. In Sections 6 and 7, we perform the heavy technical work that allows us to prove our  main extension result, Theorem~\ref{the:main-ext}, in Section 8. This theorem gives rise to many explicit existence results, including Theorem~\ref{thm:main2}, which are also presented in this section. Finally, in Section 9, we completely solve the directed Oberwolfach problem for orders up to 13. Solutions to small cases that do not arise from other results and are not used anywhere else are given in the appendix.

\section{Prerequisites}

We use the symbol $\{ \!\! \{ . \} \!\! \}$ to denote multisets, and $\langle . \rangle$ in the exponent to denote the multiplicity of an element in a multiset or sequence. Thus, for example $\{ \!\! \{ 2,2,2,4 \} \!\! \}=\{ \!\! \{ 2^{\langle 3 \rangle},4^{\langle 1 \rangle} \} \!\! \}=\{ \!\! \{ 2^{\langle 3 \rangle},4 \} \!\! \}$ and $(2,2,2,4)=(2^{\langle 3 \rangle},4^{\langle 1 \rangle})=(2^{\langle 3 \rangle},4)$.

As usual, the vertex set and arc set of a directed graph (shortly {\em digraph}) $D$ will be denoted $V(D)$ and $A(D)$, respectively. All digraphs in this paper are strict, that is, they have no loops and no parallel arcs.

By $K_n$, $\bar{K}_n$,  and $C_m$ we denote the complete graph of order $n$, the empty graph of order $n$,  and the  cycle of length $m$ ($m$-cycle), respectively.
Analogously, by $K_n^*$  and $\vec{C}_m$ we denote the complete symmetric digraph of order $n$ and the directed cycle of length $m$ (directed $m$-cycle), respectively, while the symbol $\vec{P}_m$ will denote a directed path of length $m$; that is, a directed path with $m$ arcs.

A disjoint union of digraphs $D_1$ and $D_2$ is denoted by $D_1 \du D_2$.

A {\em decomposition} of a digraph $D$ is a set $\{ D_1, \ldots, D_k \}$ of subdigraphs of $D$ such that $\{ A(D_1), \ldots, A(D_k) \}$ is a partition of $A(D)$; in this case, we write $D= D_1 \oplus \ldots \oplus D_k$. A {\em $D'$-decomposition} of $D$, where $D'$ is a subdigraph of $D$, is a decomposition into subdigraphs isomorphic to $D'$.

A {\em directed 2-factor} of a digraph $D$ is a spanning subdigraph of $D$ that is a disjoint union of directed cycles. In particular, a
{\em $(\vec{C}_{m_1},\ldots,\vec{C}_{m_k})$-factor} of $D$ is a directed 2-factor of $D$ that is a disjoint union of $k$ directed cycles of lengths $m_1,\ldots,m_k$, respectively.
A {\em $(\vec{C}_{m_1},\ldots,\vec{C}_{m_k})$-factorization} of $D$ is a decomposition of $D$ into $(\vec{C}_{m_1},\ldots,\vec{C}_{m_k})$-factors.

A decomposition, 2-factor, $(C_{m_1},\ldots,C_{m_k})$-factor, and $(C_{m_1},\ldots,C_{m_k})$-fac\-to\-ri\-za\-tion of a graph are defined analogously.

If $D$ is a digraph, and $D_1,\ldots,D_k$ its subdigraphs, then we define a {\em $(D_1,\ldots,D_k)$-subdigraph} of $D$ as a subdigraph of $D$ that is a disjoint union of $k$ digraphs isomorphic to $D_1,\ldots,D_k$, respectively.

The {\em join} of vertex-disjoined digraphs $D_1$ and $D_2$, denoted $D_1 \join D_2$, is the digraph with vertex set $V(D_1 \join D_2)=V(D_1) \du V(D_2)$ and arc set $A(D_1 \join D_2)= A(D_1) \cup A(D_2) \cup \{ (u_1,u_2),(u_2,u_1): u_1 \in V(D_1), u_2 \in V(D_2) \}$.

Let $n$ be a positive integer and $S \subseteq \ZZ_n^\ast$. The {\em directed circulant} of order $n$ with connection set $S$, denoted $\dci(n;S)$, is the digraph with vertex set $V=\{ u_i: i \in \ZZ_n \}$ and arc set $A=\{ (u_i,u_{i+d}): i \in \ZZ_n, d \in S \}$. An arc of the form $(u_i,u_{i+d})$ in $\dci(n;S)$ is said to be of {\em difference} $d$. A subdigraph of $\dci(n;S)$ is called {\em $S$-orthogonal} if it contains exactly one arc of each difference in $S$.
If $S=-S$, then $\ci(n;S)$ will denote the corresponding undirected circulant graph.

The following previous results will be used in our constructions.

\begin{theo}\cite{BerFavMah}\label{the:BerFavMah}
Let $n \in \ZZ^+$, and $a,b \in \ZZ_n^\ast$. If the circulant graph $\ci(n;\{ \pm a,\pm b \})$ is connected and of degree $4$, then it admits a decomposition into two Hamilton cycles.
\end{theo}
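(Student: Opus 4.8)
The plan is to construct the two Hamilton cycles semi-explicitly, organizing the work according to $d=\gcd(a,n)$ and $e=\gcd(b,n)$. Observe first that $4$-regularity is equivalent to $a,-a,b,-b$ being four distinct elements of $\ZZ_n$, that connectedness is equivalent to $\gcd(a,b,n)=1$, and that the latter forces $\gcd(d,e)=1$. Structurally, the edges of difference $\pm a$ form exactly $d$ vertex-disjoint cycles of length $n/d$, and those of difference $\pm b$ form $e$ cycles of length $n/e$. In particular, if $d=e=1$ there is nothing to prove: the $\pm a$-edges form one Hamilton cycle and the $\pm b$-edges the other.

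Next I would treat the case where exactly one of $d,e$ equals $1$, say $d=1$. Multiplying the connection set by $a^{-1}$ (a graph isomorphism), we may assume the graph is $\ci(n;\{\pm 1,\pm k\})$ with $\gcd(k,n)=e$. I would arrange the vertices in an $e\times (n/e)$ array whose rows are the $k$-cycles and whose columns are threaded by the $+1$-steps, so that wrapping from the last row back to the first shifts the columns by a fixed unit; then the $\pm 1$-edges already form a Hamilton cycle, while the $\pm k$-edges form the $e$ row-cycles. I would build $H_1$ by deleting one edge from each of the $e$ row-cycles and splicing in $\pm 1$-edges so that the $e$ row-cycles are stitched into a single spanning cycle, and take $H_2$ to be everything left over; the deletions and splices must be coordinated (with a short parity discussion and some care about the column shift) so that $H_2$ is again a single spanning cycle disjoint from $H_1$.

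The substantive case is $d,e\ge 2$, which forces $n$ to have at least two distinct prime divisors. Here I would pass to the geometric picture: the graph is $\ZZ^2/\Lambda$ equipped with its grid edges, where $\Lambda=\ker\!\big((x,y)\mapsto xa+yb\big)$ has index $n$; choosing a basis $\{(p,0),(s,q)\}$ of $\Lambda$ with $pq=n$, this is a $p\times q$ toroidal grid that closes up in one direction with a cyclic shift of $s$. I would first settle the untwisted subcase, the Cartesian product $C_p\,\square\,C_q$ with $p,q\ge 3$, which decomposes into two Hamilton cycles by a classical explicit construction, and then handle the twisted subcases either by reducing to the product via a change of $\Lambda$-basis when the value of $s$ permits, or by rerunning the construction while tracking $s$, again arranging that the complement of the chosen Hamilton cycle is itself a single cycle. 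Degenerate grids (with $p$ or $q$ equal to $2$) and a finite list of small orders $n$ that escape these generic constructions would be checked directly.

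I expect the main obstacle to be exactly the case $d,e\ge 2$, and within it the demand that the explicitly constructed Hamilton cycle have a \emph{complement that is again a single Hamilton cycle}; making this work uniformly across the shift parameter $s$ and the parities of $p$ and $q$ is where the argument splinters into several sub-cases and the bookkeeping grows heavy. The splicing argument in the $\{\pm 1,\pm k\}$ case is a milder instance of the same difficulty, and the only truly ad hoc ingredient should be the finite set of small exceptional circulants.
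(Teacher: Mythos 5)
You should first note that the paper does not prove this statement at all: it is quoted verbatim from Bermond--Favaron--Mah\'{e}o \cite{BerFavMah} and used as a black box (in Lemmas~\ref{lem:long1} and \ref{lem:2-cycles1}), so there is no internal proof to compare your argument against. Judged on its own, your outline is sound in its preliminary reductions -- the equivalences for $4$-regularity and connectedness, $\gcd(d,e)=1$, the trivial case $d=e=1$, the normalization to $\ci(n;\{\pm 1,\pm k\})$ when $d=1$, and the passage to a (possibly twisted) toroidal grid $\ZZ^2/\Lambda$ when $d,e\ge 2$ -- and it does follow the general line along which such results are actually proved, namely reduction to Hamilton decompositions of pseudo-Cartesian products of two cycles.

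The genuine gap is that the decisive constructions are only promised, not carried out, and they are precisely where the content of the theorem lives. In the splicing step for $\ci(n;\{\pm 1,\pm k\})$, and again in the twisted grid $C_p\times_s C_q$, producing \emph{one} Hamilton cycle from the row cycles is easy; the difficulty is that the complementary $2$-factor generically breaks into several cycles, and one must exhibit, for every shift $s$ and all parities of $p$ and $q$, a choice of splice positions for which the leftover edges form a single cycle. Your plan says this ``must be coordinated'' and would be handled ``by rerunning the construction while tracking $s$,'' but no such coordination is exhibited, and a change of $\Lambda$-basis cannot in general eliminate the twist, so the reduction to the untwisted product $C_p\,\square\,C_q$ covers only special values of $s$. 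These twisted cases (including the degenerate $q=2$ grids and the small sporadic orders you defer) occupy most of the actual Bermond--Favaron--Mah\'{e}o argument, so as written your proposal is a correct framing with the core of the proof still missing rather than a proof.
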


\begin{theo}\cite{Wes}\label{the:Wes}
Let $n \in \ZZ^+$ be even, and $a,b,c \in \ZZ_n^\ast$. If $\gcd(n,a,b)\gcd(n,c)=2$ and  the circulant graph $\ci(n;\{ \pm a,\pm b, \pm c \})$ is of degree 6, then it admits a decomposition into three Hamilton cycles.
\end{theo}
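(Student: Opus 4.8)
The plan is to exploit the unique index-two (even) subgroup $H=\{0,2,\ldots,n-2\}$ of $\ZZ_n$ and to build the three Hamilton cycles from the degree-4 decomposition supplied by Theorem~\ref{the:BerFavMah}, repaired by local edge swaps. Degree 6 means $\pm a,\pm b,\pm c$ are six distinct nonzero elements, so in particular $\ci(n;\{\pm a,\pm b\})$ has degree 4. Since $\gcd(n,a,b)$ and $\gcd(n,c)$ are positive divisors of $n$ whose product is $2$, exactly one of the following holds: \emph{Case B}, $\gcd(n,a,b)=1$ and $\gcd(n,c)=2$; or \emph{Case A}, $\gcd(n,a,b)=2$ and $\gcd(n,c)=1$. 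Because multiplying the connection set by a unit of $\ZZ_n$ yields an isomorphic circulant and preserves all relevant gcd's and parities, I would first normalize: in Case B take $c=2$, and in Case A take $c=1$. In Case B the hypothesis forces $a,b$ to generate $\ZZ_n$, with at least one of them odd; in Case A it forces $a,b$ to be even and to generate $H$, while $c=1$ generates $\ZZ_n$.

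\emph{Case B.} Since $\gcd(n,a,b)=1$, the graph $\ci(n;\{\pm a,\pm b\})$ is connected and 4-regular, so by Theorem~\ref{the:BerFavMah} it decomposes into two Hamilton cycles $H_1,H_2$. The remaining edges, all of difference $c=2$, form exactly two cycles: $C_E$ on the even vertices and $C_O$ on the odd vertices, each of length $n/2$. The core step is to merge $C_E$ and $C_O$ into a single Hamilton cycle. Assuming (say) $a$ odd, so that every $a$-edge joins an even to an odd vertex, I would pick an even vertex $x$ and perform the 2-edge swap that deletes the $c$-edges $\{x,x+2\}\in C_E$ and $\{x+a,x+a+2\}\in C_O$ and inserts the $a$-edges $\{x,x+a\}$ and $\{x+2,x+a+2\}$; tracing the result shows it is a single spanning cycle $C^\ast$. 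To keep the edge count correct, I then reverse this swap inside $\ci(n;\{\pm a,\pm b\})$: delete the two $a$-edges from the Hamilton cycle containing them and reinsert the two displaced $c$-edges, leaving the three cycles $\{C^\ast, H_1', H_2'\}$ as the desired decomposition.

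The hard part is exactly this repair. The two $a$-edges used to build $C^\ast$ must both lie in a single $H_i$, and, after their removal splits $H_i$ into two paths, reinserting the pair $\{x,x+2\},\{x+a,x+a+2\}$ must reconnect those paths into one spanning cycle rather than two shorter cycles. Which outcome occurs is governed by the cyclic order of $x,x+a,x+2,x+a+2$ along $H_i$, so the argument needs enough control over the Bermond--Favaron--Mah\'eo cycles (or enough freedom in choosing $x$, in swapping against $H_1$ versus $H_2$, and in using an $a$- versus a $b$-edge) to force the favourable order. I expect this, together with separate treatment of $n\equiv 0$ and $n\equiv 2\pmod 4$, to be the main technical obstacle.

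\emph{Case A} is dual and requires the analogous surgery one level down. Here $\ci(n;\{\pm a,\pm b\})$ is disconnected, splitting into two connected 4-regular circulants, one on the even coset $E$ and one on the odd coset $O$; identifying each coset with $\ZZ_{n/2}$ gives $\gcd(n/2,a/2,b/2)=1$, so Theorem~\ref{the:BerFavMah} decomposes each into two Hamilton cycles, yielding $P_1,P_2$ on $E$ and $Q_1,Q_2$ on $O$. The generator $c=1$ supplies a Hamilton cycle of $\ZZ_n$ whose edges cross between $E$ and $O$; using pairs of these $c$-edges in 2-edge swaps, I would stitch $P_1$ to $Q_1$ and $P_2$ to $Q_2$ into two spanning cycles, and assemble the third Hamilton cycle from the leftover $c$-edges together with the four $a$/$b$-edges displaced by the two merges. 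As in Case B, the crux is to choose the swap locations so that every merge joins two cycles into one and the final leftover edges close up into a single spanning cycle; verifying this in all residue classes is where the bulk of the work lies.
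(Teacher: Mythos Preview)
This theorem is quoted in the paper as a result of Westlund~\cite{Wes} and is not proved here; it is used only as a black box inside the proof of Lemma~\ref{lem:long1}. There is therefore no ``paper's own proof'' to compare your proposal against.

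As for the proposal itself, what you have written is a plausible strategy outline rather than a proof. You correctly identify the structural dichotomy forced by $\gcd(n,a,b)\gcd(n,c)=2$, and the idea of starting from the Bermond--Favaron--Mah\'eo decomposition and repairing it with local 2-swaps across the index-two subgroup is natural and is indeed in the spirit of Westlund's argument. However, you explicitly leave the crux unfinished: in Case~B you need the two $a$-edges $\{x,x+a\}$ and $\{x+2,x+a+2\}$ to lie in the \emph{same} Hamilton cycle $H_i$, and you need the reinsertion of the two $c$-edges to reconnect rather than disconnect it; in Case~A you need two independent merges \emph{and} the leftover edges to close into a single cycle. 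None of these is automatic, and you offer no mechanism (choice of $x$, structure of the BFM cycles, or an inductive/parity argument) that guarantees a good configuration exists. Westlund's paper is largely devoted to exactly this bookkeeping, with separate treatment of several residue classes, so ``verifying this in all residue classes'' is not a detail but the entire content of the theorem. As it stands, your write-up is a roadmap with the destination marked but the route unbuilt.
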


\section{1-rotational constructions: special cases}

In this section, we exhibit direct constructions of solutions to some special cases of OP$^\ast$ that will be required in more general results before Section~\ref{sec:small}.
The next definition and lemma provide a framework for a generalized version of the well-known 1-rotational construction.

\begin{defn}\label{def:base=q}{\rm
Let $q$ and $n$ be positive integers such that $q|(n-1)$, and let $K_n^\ast=K_{n-1}^\ast \join K_1$, where $V(K_{n-1})=\{ u_i: i \in \ZZ_{n-1} \}$ and $V(K_1)=\{ u_{\infty} \}$. The {\em base-$q$ differences} of arcs $(u_i,u_j)$, $(u_i,u_{\infty})$, and $(u_{\infty},u_i)$ are defined to be $d_r$, $\infty_r$, and $-\infty_r$, respectively, where $d \in \ZZ_{n-1}$ and $r \in \ZZ_q$ are such that $j-i \equiv d \pmod{n-1}$ and $i \equiv r \pmod{q}$.
When $q=1$, we write simply $d$, $\infty$, and $-\infty$ instead of $d_0$, $\infty_0$, and $-\infty_0$, respectively.}
\end{defn}

\begin{lemma}\label{lem:1-rotational-q}
For $i=1,\ldots,k$, let $m_i \ge 2$ be an integer, and let $n=m_1+\ldots +m_k$. Furthermore, let $q$ be a positive divisor of $n-1$.

If $K_{n-1}^\ast \join K_1$ admits $(\vec{C}_{m_1},\ldots,\vec{C}_{m_k})$-factors
$F_0,F_1,\ldots,F_{q-1}$ that jointly contain exactly one arc of each base-$q$ difference in the set $\{ d_r: d \in \ZZ_{n-1}^\ast \cup \{ \infty, -\infty \}, r \in \ZZ_q \}$, then $K_n^\ast$ admits a $(\vec{C}_{m_1},\ldots,\vec{C}_{m_k})$-factorization.
\end{lemma}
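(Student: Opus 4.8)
The plan is to exploit the action of the cyclic group $\ZZ_{n-1}$ on the vertices $\{u_i : i \in \ZZ_{n-1}\}$ of $K_{n-1}^\ast$, with $u_\infty$ fixed, and to develop each base directed $2$-factor $F_r$ under the subgroup generated by rotation through $q$ positions. First I would set up notation: for $r \in \ZZ_q$ let $\rho$ denote the permutation $u_i \mapsto u_{i+q}$ (indices mod $n-1$), $u_\infty \mapsto u_\infty$, which generates a cyclic group of order $(n-1)/q$. For each $r \in \{0,1,\ldots,q-1\}$ and each $j \in \{0,1,\ldots,(n-1)/q - 1\}$, define $F_{r,j} := \rho^j(F_r)$. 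Since $\rho$ is a digraph automorphism of $K_n^\ast$ mapping directed cycles to directed cycles of the same length, each $F_{r,j}$ is again a $(\vec{C}_{m_1},\ldots,\vec{C}_{m_k})$-factor of $K_n^\ast$. The claim is that the collection $\{F_{r,j} : r \in \ZZ_q,\ 0 \le j < (n-1)/q\}$ is a $(\vec{C}_{m_1},\ldots,\vec{C}_{m_k})$-factorization of $K_n^\ast$.

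The heart of the argument is a counting-plus-injectivity step, and this is where I expect the main (though still routine) work to lie. I would first observe that applying $\rho^j$ to an arc of base-$q$ difference $d_r$ yields an arc of the same base-$q$ difference $d_r$: indeed if $(u_i,u_{i+d})$ has $i \equiv r \pmod q$, then $(u_{i+qj}, u_{i+qj+d})$ still has second-minus-first index $\equiv d \pmod{n-1}$ and first index $\equiv r \pmod q$; similarly for the $\infty_r$ and $-\infty_r$ arcs, using that $u_\infty$ is fixed. Hence the $(n-1)/q$ factors $F_{r,0},\ldots,F_{r,(n-1)/q-1}$ partition, for each base-$q$ difference $d_r$ present in $F_r$, the full orbit of arcs of that difference under $\rho$ — and one checks that each such orbit has exactly $(n-1)/q$ arcs and these are pairwise disjoint across $j$ (an arc $(u_i,u_{i+d})$ is determined within its difference class by $i \bmod (n-1)$, and the $q$ translates $i, i+q, \ldots$ are distinct mod $n-1$). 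By hypothesis, $F_0,\ldots,F_{q-1}$ jointly contain exactly one arc of each base-$q$ difference in $\{d_r : d \in \ZZ_{n-1}^\ast \cup\{\infty,-\infty\},\ r\in\ZZ_q\}$, so taking all $\rho$-translates produces, for every such base-$q$ difference, each arc of that difference exactly once.

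It then remains to check that every arc of $K_n^\ast$ has a well-defined base-$q$ difference lying in that index set, and that distinct arcs have distinct (difference, representative) data — i.e.\ that the arcs of $K_n^\ast$ are partitioned by base-$q$ difference classes, each class being a single $\rho$-orbit. An arc $(u_i,u_j)$ with $i,j \in \ZZ_{n-1}$ has difference $d_r$ with $d = j - i \in \ZZ_{n-1}^\ast$ (nonzero since the digraph is strict) and $r = i \bmod q$; an arc incident with $u_\infty$ has difference $\infty_r$ or $-\infty_r$ with $r = i \bmod q$. Conversely each difference class is exactly one $\rho$-orbit of arcs, of size $(n-1)/q$ as noted, and there are $q(n-1-1) + 2q = q(n-1) $ wait — more carefully, $|\ZZ_{n-1}^\ast| = n-2$ contributes $q(n-2)$ classes among finite arcs and $2q$ classes among $\infty$-arcs, total $q(n-2)+2q = qn$ classes of size $(n-1)/q$ each, giving $n(n-1)$ arcs, which is exactly $|A(K_n^\ast)|$. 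Combining the three observations — $\{F_{r,j}\}$ consists of directed $2$-factors of the right type, their arcs cover every base-$q$ difference class exactly once, and the base-$q$ difference classes partition $A(K_n^\ast)$ — shows $\{F_{r,j} : r \in \ZZ_q,\ 0 \le j < (n-1)/q\}$ is the desired $(\vec{C}_{m_1},\ldots,\vec{C}_{m_k})$-factorization. The only genuinely fiddly point is bookkeeping the orbit sizes and the $i \bmod q$ labels consistently; there is no structural obstacle.
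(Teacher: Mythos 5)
Your proof is correct and follows essentially the same route as the paper: the paper also rotates the starters $F_0,\ldots,F_{q-1}$ by powers of the shift-by-$q$ permutation (written there as $\rho^{q}$ for the basic cycle $\rho=(u_0\,u_1\,\ldots\,u_{n-2})(u_\infty)$) and notes that this preserves base-$q$ differences, so the translates cover each arc exactly once. Your write-up simply spells out the orbit/difference-class bookkeeping that the paper leaves as "we can easily see," and your corrected count $q(n-2)+2q=qn$ classes of size $(n-1)/q$ is right.
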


\begin{proof}
With the set-up of Definition~\ref{def:base=q}, let $\rho$ be the permutation $\rho=( u_0 \, u_1 \, \ldots \, u_{n-2}) ( u_{\infty})$.
Since $\rho^q$ preserves base-$q$ differences of the arcs, we can easily see that the $(\vec{C}_{m_1},\ldots,\vec{C}_{m_k})$-factors in
$\{ \rho^{qi}(F_j): i \in\ZZ_{\frac{n-1}{q}}, j \in \ZZ_{q} \}$ jointly contain each arc of $K_n^\ast$ exactly once, and the result follows.
\end{proof}

Note that directed 2-factors $F_0,F_1,\ldots,F_{q-1}$ from Lemma~\ref{lem:1-rotational-q} are often referred to as {\em starter 2-factors} of the $(\vec{C}_{m_1},\ldots,\vec{C}_{m_k})$-factorization.

\begin{lemma}\label{lem:special}
The following problems have a solution:
\begin{itemize}
\item OP$^\ast(4,5)$, OP$^\ast(4,6)$, OP$^\ast(2^{\langle 3 \rangle},4)$, OP$^\ast(4,8)$, and OP$^\ast(2,4,6)$;
\item OP$^\ast(2^{\langle b \rangle},3^{\langle 2 \rangle})$ for $1 \le b \le 5$; and
\item  OP$^\ast(2^{\langle b \rangle},6)$ for $2 \le b \le 4$.
\end{itemize}
\end{lemma}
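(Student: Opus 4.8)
The plan is to prove each listed case by an explicit $1$-rotational (or more generally base-$q$) construction, invoking Lemma~\ref{lem:1-rotational-q}. For each problem OP$^\ast(m_1,\ldots,m_k)$ with $n=m_1+\ldots+m_k$, I would first choose a convenient divisor $q$ of $n-1$ (most often $q=1$, but possibly $q=2$ or $q=3$ when $n-1$ is composite and a single starter $2$-factor cannot absorb all differences), write $K_n^\ast = K_{n-1}^\ast \join K_1$ with vertices $u_0,\ldots,u_{n-2}$ and $u_\infty$, and then exhibit starter $(\vec C_{m_1},\ldots,\vec C_{m_k})$-factors $F_0,\ldots,F_{q-1}$. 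The verification in each case reduces to two routine checks: (i) each $F_j$ is genuinely a disjoint union of directed cycles of the prescribed lengths spanning all $n$ vertices, and (ii) the arcs of $F_0,\ldots,F_{q-1}$ together realize every base-$q$ difference in $\{d_r : d \in \ZZ_{n-1}^\ast \cup \{\infty,-\infty\},\, r \in \ZZ_q\}$ exactly once. Counting confirms feasibility: the number of arcs in $q$ directed $2$-factors is $qn$, and the number of base-$q$ differences is $q(n-2) + 2q/q \cdot q = \ldots$ — more carefully, $|\ZZ_{n-1}^\ast| = n-2$ nonzero differences plus the two symbols $\infty,-\infty$, each appearing in $q$ flavours $d_r$, giving $q(n-2)+2q$; but arcs incident with $u_\infty$ carry $\infty$- or $(-\infty)$-differences while the remaining $qn - 2q$ arcs carry the $q(n-2)$ finite differences, so the bookkeeping matches.

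Concretely, I would organize the construction by the shape of the target. For OP$^\ast(4,5)$, OP$^\ast(4,6)$, OP$^\ast(4,8)$, and OP$^\ast(2,4,6)$ (orders $9,10,12,12$ respectively), $n-1$ is $8,9,11,11$, so I would try $q=1$ for $n=10,12$ (prime or convenient $n-1$) and $q=1$ for $n=9$ as well, placing the directed $4$-cycle and the longer cycle(s) so that one cycle sits on a block of consecutive vertices and is then "rotated" appropriately; the $\vec C_4$ typically carries the difference-pair realizing $\{\pm\}$-values near $(n-1)/2$, while the long cycle mops up the rest together with the arcs to and from $u_\infty$. For OP$^\ast(2^{\langle 3\rangle},4)$ (order $10$), the three directed $2$-cycles are just three symmetric arc-pairs, which is especially flexible for hitting prescribed differences. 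For the infinite-looking but finite families OP$^\ast(2^{\langle b\rangle},3^{\langle 2\rangle})$ with $1 \le b \le 5$ (orders $8,10,12,14,16$) and OP$^\ast(2^{\langle b\rangle},6)$ with $2 \le b \le 4$ (orders $10,12,14$), I would handle each value of $b$ separately, again choosing $q \in \{1,2\}$ according to the factorization of $n-1$; for the odd cycles $\vec C_3$ one must be slightly careful, since a directed $3$-cycle on vertices $u_i,u_j,u_k$ uses differences $j-i$, $k-j$, $i-k$ summing to $0$ mod $n-1$, so the two triangles' difference-triples must be chosen to tile the remaining differences.

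The main obstacle I expect is purely combinatorial search rather than conceptual: finding, for each individual case, a valid assignment of base-$q$ differences to the cycles of the starter factor(s) that is simultaneously (a) a partition of the required difference set and (b) realizable as vertex-disjoint directed cycles of the exact prescribed lengths spanning $\ZZ_{n-1} \cup \{\infty\}$. Because $n$ is small in every case, this is a finite check — in practice one writes down the starter factors explicitly and verifies conditions (i) and (ii) by inspection — so the honest write-up is simply a table of starter $2$-factors for each problem, with the routine verification left to the reader (or relegated to the appendix, as the paper's introduction indicates for small ad hoc solutions). I would therefore present the proof as: "For each case we exhibit starter $2$-factors; the reader may check that Lemma~\ref{lem:1-rotational-q} applies," followed by the explicit list, treating the two short families $b=1,\ldots,5$ and $b=2,3,4$ case by case.
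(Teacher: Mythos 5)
Your overall plan coincides with the paper's method for most of the listed cases: the paper proves this lemma precisely by exhibiting explicit starter $(\vec{C}_{m_1},\ldots,\vec{C}_{m_k})$-factors and invoking Lemma~\ref{lem:1-rotational-q} (with $q=1$ for every case whose factor contains a $2$-cycle and for OP$^\ast(4,5)$, and with $q=3$ for OP$^\ast(4,6)$). However, as written your argument never produces a single starter; since the entire mathematical content of this lemma is the explicit data, "a finite search would find them" does not establish existence for these particular parameter sets, so at minimum the list of starters must actually be supplied.

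There is also a concrete step that fails. You propose to try $q=1$ for the orders $10$ and $12$, but when $n$ is even and every $m_i\ge 3$ no $q=1$ starter can exist: deleting $u_\infty$ from the cycle containing it leaves a directed path of positive length which, together with the remaining cycles, must use each difference in $\ZZ_{n-1}^\ast$ exactly once; each cycle's differences sum to $0 \pmod{n-1}$ and $\ZZ_{n-1}^\ast$ sums to $0$ (as $n-1$ is odd), so the path's differences would sum to $0$, forcing its endpoints to coincide --- a contradiction. For OP$^\ast(4,6)$ this merely forces $q>1$, and your fallback $q=3$ (which divides $9$) is exactly what the paper uses. But for OP$^\ast(4,8)$ we have $n-1=11$ prime, so the only admissible parameters are $q=1$ (impossible by the above) and $q=11$, which carries no symmetry at all; your rotational framework therefore cannot settle this case, and the paper instead exhibits a complete solution without symmetry, listing all eleven directed $2$-factors explicitly. (OP$^\ast(2,4,6)$, the other order-$12$ case, is fine with $q=1$ because its factor contains a $2$-cycle through $u_\infty$.) So the proposal needs both the explicit starters and a separate, non-rotational construction for OP$^\ast(4,8)$.
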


\begin{proof}
We shall use the set-up from Definition~\ref{def:base=q}. In all cases except for OP$^\ast(4,6)$ and OP$^\ast(4,8)$, we have $q=1$. It thus suffices to exhibit a $(\vec{C}_{m_1},\ldots,\vec{C}_{m_k})$-factor that contains exactly one arc of each difference in $\ZZ_{n-1}^\ast \cup \{ \infty, -\infty \}$.

The following are
\begin{itemize}
\item a $(\vec{C}_{4},\vec{C}_{5})$-factor of $K_9^\ast$:
    $u_0 \, u_1 \, u_5 \, u_3 \, u_0 \; \cup \;
    u_2 \, u_4 \, u_7 \, u_6 \, u_{\infty} u_2$;

\item a $(\vec{C}_{2}^{\langle 3 \rangle},\vec{C}_{4})$-factor of $K_{10}^\ast$:
    $ u_0 \, u_{\infty} \, u_0  \; \cup \; u_1 \, u_3 \, u_1 \; \cup \; u_5 \, u_6 \, u_5 \; \cup \; u_2 \, u_7 \, u_4 \, u_8 \, u_2$;

\item a $(\vec{C}_{2},\vec{C}_{4},\vec{C}_{6})$-factor of $K_{12}^\ast$:
    $ u_0 \, u_{\infty} \, u_0  \; \cup \; u_3 \, u_7 \, u_9 \, u_8 \, u_3 \; \cup \;
    u_1 \, u_2 \, u_5 \; u_{10} \, u_6 \, u_4 \,  u_1$;

%%%

\item a $(\vec{C}_{2},\vec{C}_{3}^{\langle 2 \rangle})$-factor of $K_8^\ast$:
    $u_0 \, u_{\infty} \, u_0 \; \cup \; u_1 \, u_2 \, u_4 \, u_1 \; \cup \; u_6 \, u_5 \, u_3 \, u_6$;

\item a $(\vec{C}_{2}^{\langle 2 \rangle},\vec{C}_{3}^{\langle 2 \rangle})$-factor of $K_{10}^\ast$:
    $ u_0 \, u_{\infty} \, u_0   \; \cup \; u_2 \, u_7 \, u_2 \; \cup \; u_1 \, u_3 \, u_4 \, u_1 \; \cup \; u_5 \, u_8 \, u_6 \, u_5$;

\item a $(\vec{C}_{2}^{\langle 3 \rangle}, \vec{C}_{3}^{\langle 2 \rangle})$-factor of $K_{12}^\ast$:
    $ u_0 \, u_{\infty} \, u_0   \; \cup \; u_1 \, u_4 \, u_1 \; \cup \; u_5 \, u_6 \, u_5 \; \cup \; u_3 \, u_{10} \, u_8 \, u_3 \; \cup \; u_2 \, u_7 \, u_9 \, u_2$;

\item a $(\vec{C}_{2}^{\langle 4 \rangle}, \vec{C}_{3}^{\langle 2 \rangle})$-factor of $K_{14}^\ast$: \\
    $ u_0 \, u_{\infty} \, u_0   \; \cup \; u_6 \, u_7 \, u_6 \; \cup \; u_1 \, u_5 \, u_1 \; \cup \; u_2 \, u_{12} \, u_2 \; \cup \; u_3 \, u_{10} \, u_8 \, u_3
    \; \cup \; u_4 \, u_{9} \, u_{11} \, u_4$;

\item a $(\vec{C}_{2}^{\langle 5 \rangle}, \vec{C}_{3}^{\langle 2 \rangle})$-factor of $K_{16}^\ast$: \\
    $ u_0 \, u_{\infty} \, u_0   \; \cup \; u_4 \, u_8 \, u_4 \; \cup \; u_7 \, u_{12} \, u_7 \; \cup \; u_3 \, u_{6} \, u_3
    \; \cup \; u_9 \, u_{11} \, u_9
    \; \cup \; u_1 \, u_{10} \, u_2 \, u_1
    \; \cup \; u_5 \, u_{13} \, u_{14} \, u_5$;

%%%

\item a $(\vec{C}_{2}^{\langle 2 \rangle},\vec{C}_{6})$-factor of $K_{10}^\ast$:
    $ u_0 \, u_{\infty} \, u_0  \; \cup \; u_1 \, u_6 \, u_1 \; \cup \; u_2 \, u_3 \, u_5 \; u_8 \, u_7 \, u_4 \,  u_2$;

\item a $(\vec{C}_{2}^{\langle 3 \rangle},\vec{C}_{6})$-factor of $K_{12}^\ast$:
    $ u_0 \, u_{\infty} \, u_0  \; \cup \; u_1 \, u_6 \, u_1 \; \cup \; u_7 \, u_{10} \, u_7 \; \cup \;
    u_2 \, u_3 \, u_5 \; u_9 \, u_8 \, u_4 \,  u_2$;

\item a $(\vec{C}_{2}^{\langle 4 \rangle},\vec{C}_{6})$-factor of $K_{14}^\ast$:  \\
    $ u_0 \, u_{\infty} \, u_0  \; \cup \; u_1 \, u_6 \, u_1 \; \cup \; u_3 \, u_{9} \, u_3 \; \cup \;
    u_{12} \, u_{2} \, u_{12} \; \cup \;
    u_4 \, u_5 \, u_7 \; u_{11} \, u_{10} \, u_8 \,  u_4$,

\end{itemize}
all with the required properties.

For OP$^\ast(4,6)$, we have $n=10$, and we choose $q=3$. By Lemma~\ref{lem:1-rotational-q}, it suffices to find three directed 2-factors that jointly contain exactly one arc of each difference in $\{ d_r: d \in \{ \pm 1, \pm 2, \pm 3, \pm 4, \pm \infty \}, r \in \ZZ_3 \}$. It is not difficult to verify that the following
$(\vec{C}_{4},\vec{C}_{6})$-factors of $K_{10}^\ast$ satisfy the requirement:
    \begin{eqnarray*}
    F_0 &=& u_1 \, u_5 \, u_8 \, u_6 \, u_1 \; \cup \;  u_0 \, u_2 \, u_4 \, u_3 \, u_{\infty} \, u_7 \, u_0, \\
    F_1 &=& u_2 \, u_7 \, u_3 \, u_6 \, u_2 \; \cup \;  u_0 \, u_8 \, u_5 \, u_4 \, u_1 \, u_{\infty} \, u_0, \\
    F_2 &=& u_0 \, u_7 \, u_1 \, u_8 \, u_0 \; \cup \;  u_2 \, u_6 \, u_3 \, u_4 \, u_5 \, u_{\infty} \, u_2.
    \end{eqnarray*}

For OP$^\ast(4,8)$, no 1-rotational solution with $q=1$ exists (see the paragraphs following the proof of Theorem~\ref{thm:small}). The following directed 2-factors form a solution without symmetry:
\begin{eqnarray*}
F_0 &=& u_3 \, u_{10} \, u_4 \, u_{\infty} \, u_3 \; \cup \; u_0 \, u_2 \, u_9 \, u_5 \, u_ 1 \, u_7 \, u_6 \, u_8 \, u_0, \\
F_1 &=& u_1 \, u_4 \, u_7 \, u_3 \, u_1 \; \cup \; u_0 \, u_{\infty} \, u_5 \, u_9 \, u_ 2 \, u_{10} \, u_8 \, u_6 \, u_0, \\
F_2 &=& u_0 \, u_7 \, u_5 \, u_3 \, u_0 \; \cup \; u_1 \, u_2 \, u_{\infty} \, u_9 \, u_{10} \, u_6 \, u_4 \, u_8 \, u_1, \\
F_3 &=& u_3 \, u_8 \, u_4 \, u_6 \, u_3 \; \cup \; u_0 \, u_5 \, u_{10} \, u_2 \, u_ 1 \, u_9 \, u_{\infty} \, u_7 \, u_0, \\
F_4 &=& u_0 \, u_3 \, u_{\infty} \, u_1 \, u_0 \; \cup \; u_2 \, u_6 \, u_9 \, u_8 \, u_ 7 \, u_{10} \, u_5 \, u_4 \, u_2, \\
F_5 &=& u_0 \, u_{10} \, u_3 \, u_9 \, u_0 \; \cup \; u_1 \, u_5 \, u_8 \, u_{\infty} \, u_6 \, u_7 \, u_2 \, u_4 \, u_1, \\
F_6 &=& u_0 \, u_8 \, u_{10} \, u_{\infty} \, u_0 \; \cup \; u_1 \, u_3 \, u_4 \, u_5 \, u_6 \, u_2 \, u_7 \, u_9 \, u_1, \\
F_7 &=& u_3 \, u_5 \, u_7 \, u_8 \, u_3 \; \cup \; u_0 \, u_9 \, u_4 \, u_{10} \, u_ 1 \, u_6 \, u_{\infty} \, u_2 \, u_0, \\
F_8 &=& u_4 \, u_9 \, u_7 \, u_{\infty} \, u_4 \; \cup \; u_0 \, u_1 \, u_8 \, u_5 \, u_ 2 \, u_3 \, u_6 \, u_{10} \, u_0, \\
F_9 &=& u_2 \, u_5 \, u_{\infty} \, u_8 \, u_2 \; \cup \; u_0 \, u_6 \, u_1 \, u_{10} \, u_9 \, u_3 \, u_7 \, u_4 \, u_0, \\
F_{10} &=& u_1 \, u_{\infty} \, u_{10} \, u_7 \, u_1 \; \cup \; u_0 \, u_4 \, u_3 \, u_2 \, u_8 \, u_9 \, u_6 \, u_5 \, u_0.
\end{eqnarray*}
\end{proof}

We remark that a solution to OP$^\ast(4,5)$ (as well as to OP$^\ast(3,3,5)$, given in Appendix~\ref{app1}) has been previously constructed by Shabani and the second author \cite{ShaSaj}, while
a solution to OP$^\ast(4,6)$ of a different form has been obtained by Lacaze-Masmonteil \cite{Lac2}.

%%% Main

\section{A recursive construction of solutions to OP$^\ast$: \\ the bipartite case}

In this section, we begin to describe a method for constructing solutions to larger cases of OP$^\ast$ from solutions to smaller cases. This method is particularly simple and effective when the directed 2-factor is bipartite; that is, when all its cycle lengths are even. We re-state Theorem~\ref{thm:main-even} for convenience.

\begin{customthm}{\ref{thm:main-even}}
Let $\ell$ and $k$ be integers such that $1 \le \ell < k$, and let $m_1,\ldots, m_k$ be even positive integers such that  $m_1+\ldots+m_{\ell}=m_{\ell+1}+ \ldots + m_k$.

If OP$^\ast(m_1,\ldots,m_{\ell})$ and OP$^\ast(m_{\ell+1},\ldots,m_k)$ both have solutions, then OP$^\ast(m_1,\ldots,m_k)$ has a solution,
\end{customthm}

\begin{proof}
Let $n=m_1+\ldots+m_{\ell}=m_{\ell+1}+ \ldots + m_k$.
We need to show that $K_{2n}^\ast$ admits a $(\vec{C}_{m_1},\ldots,\vec{C}_{m_k})$-factorization. Let $D=K_{2n}^\ast= D_1 \join D_2$, where $D_1 \cong D_2 \cong K_n^\ast$, and
$$ V(D_1)=X=\{ x_i: i \in \ZZ_n \} \quad \mbox{ and } \quad V(D_2)=Y=\{ y_i: i \in \ZZ_n \}.$$
Let $\rho$ be the cyclic permutation $\rho=(y_0 \, y_1 \, \ldots \, y_{n-1} )$ on $X \cup Y$ that fixes the vertices in $X$ pointwise.

We first decompose $D= (D_1 \du D_2) \oplus (\bar{D}_1 \join \bar{D}_2)$. By assumption, digraphs $D_1$ and $D_2$ admit a
$(\vec{C}_{m_1},\ldots,\vec{C}_{m_{\ell}})$-factorization and a
$(\vec{C}_{m_{\ell+1}},\ldots,\vec{C}_{m_k})$-factorization, respectively, each with $n-1$ directed 2-factors. Hence $D_1 \du D_2$ admits a $(\vec{C}_{m_{1}},\ldots,\vec{C}_{m_k})$-factorization, and it remains only to construct a $(\vec{C}_{m_{1}},\ldots,\vec{C}_{m_k})$-factorization of $\bar{D_1} \join \bar{D_2}$.

Let $\P_1=\{ X_1,\ldots,X_k \}$ and $\P_2=\{ Y_1,\ldots,Y_k \}$ be partitions of $X$ and $Y$, respectively, such that $|X_i|=|Y_i|=\frac{m_i}{2}$ for $i=1,\ldots,k$. For each $i$, relabel the vertices in $X_i$ and $Y_i$ as $X_i=\{ x_1^{(i)},\ldots, x_{\frac{m_i}{2}}^{(i)} \}$ and $Y_i=\{ y_1^{(i)},\ldots, y_{\frac{m_i}{2}}^{(i)} \}$, and let
$$C^{(i)}= x_1^{(i)} \, y_1^{(i)} \, x_2^{(i)} \, y_2^{(i)} \, \ldots x_{\frac{m_i}{2}}^{(i)} \,  y_{\frac{m_i}{2}}^{(i)} \, x_1^{(i)}.$$
So $C^{(i)}$ is a directed $m_i$-cycle with arcs in $(X \times Y) \cup (Y \times X)$, and $F=C^{(1)} \cup \ldots \cup C^{(k)}$ is a
$(\vec{C}_{m_1},\ldots,\vec{C}_{m_k})$-factor of $\bar{D_1} \join \bar{D_2}$. Since every vertex in $X$ is the tail and the head of exactly one arc of $F$, the directed 2-factors in $\D=\{ \rho^i(F): i \in \ZZ_n \}$ jointly contain each arc in $(X \times Y) \cup (Y \times X)$ exactly once. Hence $\D$ is a $(\vec{C}_{m_{1}},\ldots,\vec{C}_{m_k})$-factorization of $\bar{D_1} \join \bar{D_2}$.
\end{proof}

\begin{cor}\label{cor:even}
Let $a$ and $b$ be positive integers, and $s$ and $t$ be even positive integers such that $as=bt$. Then OP$^\ast( s^{\langle a \rangle}, t^{\langle b \rangle} )$ has a solution.
\end{cor}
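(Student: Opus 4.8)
The plan is to apply Theorem~\ref{thm:main-even} iteratively, building up the multiset $\{\!\!\{ s^{\langle a\rangle}, t^{\langle b\rangle}\}\!\!\}$ one balanced split at a time, with the base cases supplied by the uniform directed Oberwolfach theorem, Theorem~\ref{thm:Kn*}. The key observation is that $s^{\langle a\rangle}$ alone is (almost always) solvable: since $s$ is even, $s\ge 2$, and $as \ge 2$, Theorem~\ref{thm:Kn*} gives a solution to OP$^\ast(as;s) = $ OP$^\ast(s^{\langle a\rangle})$ unless $(as,s)\in\{(4,4),(6,6)\}$, i.e. unless $a=1$ and $s\in\{4,6\}$; but in that degenerate case $a=1$ we would need $bt = s$ with $t$ even, so $(s^{\langle a\rangle}, t^{\langle b\rangle})$ is itself uniform-or-two-block and handled directly below. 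Likewise OP$^\ast(t^{\langle b\rangle})$ is solvable except when $b=1$ and $t\in\{4,6\}$.

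First I would dispose of the cases where one of the two exponents is forced to be small. If $a=1$, then $s=bt$, so OP$^\ast(s^{\langle 1\rangle}, t^{\langle b\rangle}) = $ OP$^\ast(s, t^{\langle b\rangle})$; here apply Theorem~\ref{thm:main-even} with $\ell = 1$, the first block being the single cycle $\vec C_s$ (solved by the trivial OP$^\ast(s)$, i.e. a $\vec C_s$-factorization of $K_s^\ast$, which exists for all $s\ge 2$) and the second block being $t^{\langle b\rangle}$ (solved by Theorem~\ref{thm:Kn*} since $bt = s \ge 2$ and $t$ even, with the only snag $b=1, t\in\{4,6\}$, which forces $s=t\in\{4,6\}$ and $a=b=1$ — then $(s^{\langle a\rangle},t^{\langle b\rangle})=(s,s)$ with $s$ even, solved by Lemma~\ref{lem:special} for $s=4$ via OP$^\ast(4,6)$... wait, $(4,4)$: OP$^\ast(4,4)=$ OP$^\ast(8;4)$ which is \emph{not} covered — but $as=bt$ with $a=b=1$ gives $s=t$, and $n=8$, and OP$^\ast(4,4)$ is excluded by Theorem~\ref{thm:Kn*}; since the corollary only claims existence ``for all'' such parameters, and $(4,4)$ genuinely has no solution, I should actually \emph{exclude} $(s,t)=(4,4)$ and $(6,6)$ from the corollary's scope, or note the corollary as stated is slightly imprecise). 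The symmetric case $b=1$ is identical.

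In the generic case $a\ge 2$ and $b\ge 2$ (so that neither $s^{\langle a\rangle}$ nor $t^{\langle b\rangle}$ hits a Theorem~\ref{thm:Kn*} exception, as those require the relevant exponent to equal $1$), I would argue directly: set $n = as = bt$, take $\ell$ so that $\{m_1,\dots,m_\ell\} = \{s^{\langle a\rangle}\}$ and $\{m_{\ell+1},\dots,m_k\} = \{t^{\langle b\rangle}\}$; then $m_1+\dots+m_\ell = as = bt = m_{\ell+1}+\dots+m_k$, all $m_i$ are even, OP$^\ast(s^{\langle a\rangle})$ has a solution by Theorem~\ref{thm:Kn*} (as $s\mid as$, $s\ge 2$, $a\ge 2$ rules out the exceptions $(4,4),(6,6),(6,3)$), and similarly OP$^\ast(t^{\langle b\rangle})$ has a solution. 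Theorem~\ref{thm:main-even} then immediately yields a solution to OP$^\ast(s^{\langle a\rangle}, t^{\langle b\rangle})$.

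The main obstacle is purely bookkeeping around the Theorem~\ref{thm:Kn*} exceptions $(4,4)$ and $(6,6)$, which occur exactly when $a=1$ (resp.\ $b=1$) and $s\in\{4,6\}$ (resp.\ $t\in\{4,6\}$); these collapse to $(s^{\langle a\rangle},t^{\langle b\rangle}) = (4,4)$ or $(6,6)$, for which no solution exists, so the corollary statement should be read with these two tuples excluded (equivalently, the corollary is correct as stated only if one interprets OP$^\ast(4,4)$, OP$^\ast(6,6)$ as vacuous/excluded). Modulo that caveat, there is no real difficulty: the whole content is the single application of Theorem~\ref{thm:main-even} with the uniform solutions of Theorem~\ref{thm:Kn*} plugged into the two blocks.
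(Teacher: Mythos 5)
Your generic case ($a\ge 2$ and $b\ge 2$, and more generally whenever $(a,s),(b,t)\notin\{(1,4),(1,6)\}$) is exactly the paper's argument: split the multiset into the two uniform blocks, invoke Theorem~\ref{thm:Kn*} for each block, and apply Theorem~\ref{thm:main-even}. The problem is your treatment of the exceptional cases, which contains a genuine error. You have conflated the two notations: in the tuple notation, OP$^\ast(4,4)$ means two tables of size $4$ on $n=8$ vertices, i.e.\ OP$^\ast(8;4)$ in the uniform notation of Theorem~\ref{thm:Kn*}, and $(8,4)$ is not one of that theorem's exceptions, so OP$^\ast(4,4)$ \emph{does} have a solution; likewise OP$^\ast(6,6)=$ OP$^\ast(12;6)$ has one. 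The exceptions $(4,4)$ and $(6,6)$ in Theorem~\ref{thm:Kn*} refer to $(n,m)$, i.e.\ to the one-table problems OP$^\ast(4)$ and OP$^\ast(6)$. So the corollary is correct as stated, and your proposed ``caveat'' excluding $(s,t)=(4,4),(6,6)$ is unnecessary and wrong.

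The same confusion breaks your $a=1$ branch: there you claim a $\vec C_s$-factorization of $K_s^\ast$ ``exists for all $s\ge 2$,'' but by Theorem~\ref{thm:Kn*} it fails precisely for $s\in\{4,6\}$, which are exactly the cases at issue. When $(a,s)=(1,4)$ the possibilities are $(b,t)=(1,4)$, giving OP$^\ast(4,4)$ (solved by Theorem~\ref{thm:Kn*} as noted), and $(b,t)=(2,2)$, giving OP$^\ast(2^{\langle 2\rangle},4)$; when $(a,s)=(1,6)$ they are OP$^\ast(6,6)$ and OP$^\ast(2^{\langle 3\rangle},6)$. The tuples OP$^\ast(2^{\langle 2\rangle},4)$ and OP$^\ast(2^{\langle 3\rangle},6)$ cannot be reached by any application of Theorem~\ref{thm:main-even}, because every balanced split of these multisets isolates the $4$ (resp.\ the $6$) on one side, and OP$^\ast(4)$, OP$^\ast(6)$ have no solutions. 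The paper handles them by separate constructions: Lemma~\ref{lem:(2,2,4)} (which needs Proposition~\ref{prop:main} in full generality) and Lemma~\ref{lem:special}. Your proposal leaves these two cases unproved, so as written it does not establish the corollary.
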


\begin{proof}
Assume first that $(a,s),(b,t) \not\in \{ (1,4),(1,6) \}$, so that by Theorem~\ref{thm:Kn*}, both OP$^\ast( as;s)$ and OP$^\ast(bt;t )$ have solutions. Since $as=bt$ and $s,t$ are even, it follows from Theorem~\ref{thm:main-even} that OP$^\ast( s^{\langle a \rangle}, t^{\langle b \rangle} )$ has a solution.

The remaining cases are OP$^\ast(4,4)$ and OP$^\ast(6,6)$, which have solutions by Theorem~\ref{thm:Kn*}, as well as OP$^\ast(2^{\langle 2 \rangle},4)$ and OP$^\ast(2^{\langle 3 \rangle},6)$, which have solutions by Lemmas~\ref{lem:(2,2,4)} and \ref{lem:special}, respectively.
\end{proof}

A repeated application of Theorem~\ref{thm:main-even} and Corollary~\ref{cor:even}, respectively, immediately yields the following two results.

\begin{cor}\label{cor:even2}
Let $M=\{ \!\! \{ m_1,\ldots,m_k \} \!\! \}$ be a multiset of even positive integers with a partition into multisets $P_1,\ldots,P_{\ell}$ with the following properties:
\begin{enumerate}[(i)]
\item $\ell=2^a$ for some $a \in \ZZ^+$;
\item $\sum_{m \in P_i} m=\sum_{m \in P_j} m$ for all $i,j \in \{ 1,2,\ldots,\ell\}$; and
\item for each $i=1,2,\ldots ,\ell$, if $P_i=\{ \!\! \{ m_{1,i},\ldots,m_{k_i,i} \} \!\! \}$, then OP$^\ast(m_{1,i},\ldots,m_{k_i,i})$ has a solution.
\end{enumerate}
Then OP$^\ast(m_1,\ldots,m_k)$ has a solution.
\end{cor}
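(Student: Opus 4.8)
The plan is to prove the statement by induction on $a$, where $\ell = 2^a$, using Theorem~\ref{thm:main-even} as the inductive engine. The base case $a=1$ is immediate: here $\ell=2$, so the partition is $\{P_1,P_2\}$ with $\sum_{m\in P_1} m = \sum_{m\in P_2} m$, all $m_i$ even, and both OP$^\ast(P_1)$ and OP$^\ast(P_2)$ solvable by hypothesis~(iii); Theorem~\ref{thm:main-even} (with $\ell$ there taken to be $k_1$ and $k$ there taken to be $k_1+k_2$) then yields a solution to OP$^\ast(m_1,\ldots,m_k)$.

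For the inductive step, suppose the result holds for $a-1$ and we are given a partition $P_1,\ldots,P_{2^a}$ satisfying (i)--(iii). First I would group the parts into two halves, $Q_1 = P_1 \cup \ldots \cup P_{2^{a-1}}$ and $Q_2 = P_{2^{a-1}+1} \cup \ldots \cup P_{2^a}$ (union of multisets). By property~(ii), every $P_i$ has the same sum $\sigma$, so $\sum_{m\in Q_1} m = 2^{a-1}\sigma = \sum_{m\in Q_2} m$, and all entries of $Q_1$ and $Q_2$ are even. Next, I would apply the inductive hypothesis separately to $Q_1$ (with its partition $P_1,\ldots,P_{2^{a-1}}$, which has $2^{a-1}$ blocks of equal sum, each admitting a solution by (iii)) and to $Q_2$ (with partition $P_{2^{a-1}+1},\ldots,P_{2^a}$), concluding that OP$^\ast(Q_1)$ and OP$^\ast(Q_2)$ both have solutions. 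Finally, since $Q_1$ and $Q_2$ are multisets of even integers with equal sums and both problems are solvable, Theorem~\ref{thm:main-even} gives a solution to OP$^\ast(m_1,\ldots,m_k)$, where $\{\!\!\{m_1,\ldots,m_k\}\!\!\} = Q_1 \cup Q_2 = M$. This completes the induction.

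I do not expect any genuine obstacle here: the content is entirely a bookkeeping argument that organizes repeated invocations of Theorem~\ref{thm:main-even} along a binary tree of depth $a$. The only point requiring a little care is making sure the equal-sum hypothesis (ii) is used correctly at each level — it is exactly what guarantees that when we merge $2^j$ blocks into one, the resulting two super-blocks still have equal sums, which is precisely what Theorem~\ref{thm:main-even} demands. An alternative, equivalent phrasing would be to process the parts $2^a-1$ times as an explicit repeated application (``merge any two blocks of equal sum''), but the binary-tree induction makes the role of the power-of-two hypothesis transparent and is the cleanest way to present it.
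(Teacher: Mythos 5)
Your proof is correct and is essentially the paper's argument: the paper states that Corollary~\ref{cor:even2} follows immediately by repeated application of Theorem~\ref{thm:main-even}, and your binary-tree induction on $a$ is precisely a careful write-up of that repeated application, with the equal-sum hypothesis tracked at each level exactly as needed.
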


\begin{cor}\label{cor:even3}
Let $\ell=2^a$ for $a \in \ZZ^+$, and for each $i=1,2,\ldots,\ell$, let $a_i$ and $s_i$ be positive integers with $s_i$ even. If $a_is_i=a_js_j$ for all $i$ and $j$, then OP$^\ast( s_1^{\langle a_1 \rangle}, \ldots, s_{\ell}^{\langle a_{\ell} \rangle} )$ has a solution.
\end{cor}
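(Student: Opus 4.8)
The plan is to prove the statement by induction on the exponent $a$, where $\ell = 2^a$. Set $N := a_i s_i$; by hypothesis this quantity does not depend on $i$, and what must be produced is a $(\vec{C}_{s_1}^{\langle a_1\rangle},\ldots,\vec{C}_{s_\ell}^{\langle a_\ell\rangle})$-factorization of $K_n^\ast$, where $n = \sum_{i=1}^\ell a_i s_i = 2^a N$. For the base case $a=1$ we have $\ell = 2$ and two uniform blocks $s_1^{\langle a_1\rangle}$, $s_2^{\langle a_2\rangle}$ of even lengths with $a_1 s_1 = a_2 s_2$, so the assertion is exactly Corollary~\ref{cor:even}. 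This is where the only non-formal input enters the argument: Corollary~\ref{cor:even} already absorbs both the generic case coming from Theorem~\ref{thm:Kn*} and the exceptional small cases OP$^\ast(4,4)$, OP$^\ast(6,6)$, OP$^\ast(2^{\langle 2\rangle},4)$, OP$^\ast(2^{\langle 3\rangle},6)$ treated in its proof (via Lemma~\ref{lem:special} and Lemma~\ref{lem:(2,2,4)}).

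For the inductive step, suppose $a \ge 2$ and that the result holds with exponent $a-1$. Partition the $2^a$ uniform blocks into two groups $G_1$ and $G_2$ of $2^{a-1}$ blocks each. Each $G_j$ is again a family of $2^{a-1}$ uniform blocks of even lengths whose block sums are all equal to $N$, so by the inductive hypothesis the directed Oberwolfach problems OP$^\ast(G_1)$ and OP$^\ast(G_2)$ both have solutions; moreover the cycle lengths in each of $G_1, G_2$ sum to $2^{a-1}N$, so the two sums agree. Since all cycle lengths are even, Theorem~\ref{thm:main-even} applies --- with its ``$\ell$'' being the number of cycles in $G_1$ and its ``$k$'' the total number of cycles --- and yields a solution to OP$^\ast(s_1^{\langle a_1\rangle},\ldots,s_\ell^{\langle a_\ell\rangle})$, completing the induction. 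Alternatively, one may first apply Corollary~\ref{cor:even} to $2^{a-1}$ disjoint pairs of blocks and then hand the resulting $2^{a-1}$ composite blocks to Corollary~\ref{cor:even2}.

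I do not expect any genuine obstacle: beyond the base case the argument is just the mechanical gluing furnished by Theorem~\ref{thm:main-even}, and the only points needing attention are the routine verifications that at every split the two halves carry the same total cycle length and that each half is again a valid instance of the corollary --- a power-of-two number of uniform even-length blocks with constant block sum. Both follow at once from $a_i s_i = N$ for all $i$, so the full content of the corollary is already contained in Corollary~\ref{cor:even} and Theorem~\ref{thm:main-even}.
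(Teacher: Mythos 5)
Your induction is correct, and it is essentially the paper's argument: the paper derives Corollary~\ref{cor:even3} precisely by repeated application of Theorem~\ref{thm:main-even} with Corollary~\ref{cor:even} supplying the two-block (base) cases, including the exceptional blocks $(4)$ and $(6)$. Your binary splitting with base case $a=1$ is just an explicit organization of that same repeated application, so there is nothing to add.
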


\section{A recursive construction of solutions to OP$^\ast$: \\ the general case}

In this section, we shall generalize the idea of constructing solutions to larger cases of OP$^\ast$ from smaller ones by allowing cycles of odd length. In this case, the two ``parts'' of the construction will necessarily be of unequal size.

\begin{prop}\label{prop:main}
Let $\ell$, $k$, and $m_1,\ldots, m_k$ be integers such that $1 \le \ell < k$ and $m_i \ge 2$ for $i=1,\ldots, k$. Let $s=m_1+\ldots+m_{\ell}$ and $t=m_{\ell+1}+\ldots+m_{k}$, and assume $s<t$.

Then OP$^\ast(m_1,\ldots,m_k)$ has a solution if the following conditions all hold.
\begin{enumerate}[(1)]
\item OP$^\ast(m_1,\ldots,m_{\ell})$ has a solution.
\item There exist a decomposition $\{D',D''\}$ of $K_t^\ast$ and
 non-negative integers $s_1, \ldots, s_k, t_1, \ldots, t_k$ such that:
   \begin{enumerate}[(a)]
    \item $D'$ admits a $(\vec{C}_{m_{\ell+1}},\ldots,\vec{C}_{m_{k}})$-factorization with exactly $s-1$ directed 2-factors;
    \item $s_1+ \ldots + s_k=s$;
    \item $m_i=2s_i+t_i$ for $i=1,2,\ldots,k$; and
    \item $D''$ admits a decomposition $\D=\{ H_0,\ldots,H_{t-1} \}$ such that for all $j \in \ZZ_t$,
     \begin{itemize}
     \item $V(H_j)=\rho^j(V(H_0))$, where $\rho$ is a cyclic permutation of order $t$ on $V(K_t^\ast)$;
     \item $H_j= D_1^{(j)} \du \ldots \du D_k^{(j)}$, where for each $i=1,\ldots,k$,
         \begin{enumerate}[$\star$]
        \item $D_i^{(j)} \cong \vec{P}_{t_i}$ if $t_i<m_i$, and $D_i^{(j)} \cong \vec{C}_{m_i}$ if $t_i=m_i$;
        \item if $D_i^{(0)}$ is a directed $(x,y)$-path for some vertices $x$ and $y$, then $D_i^{(j)}$ is a directed $(\rho^j(x),\rho^j(y))$-path.
        \end{enumerate}
    \end{itemize}
    \end{enumerate}
\end{enumerate}
\end{prop}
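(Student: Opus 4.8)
The plan is to build a $(\vec{C}_{m_1},\ldots,\vec{C}_{m_k})$-factorization of $K_n^\ast$, where $n=s+t$, by a ``blow-up'' construction that splits the vertex set into a small part $X$ of size $s$ and a large part $Y$ of size $t$, mirroring the bipartite construction of Theorem~\ref{thm:main-even} but with unequal parts. Write $K_n^\ast = A \join B$, where $A\cong K_s^\ast$ on vertex set $X$ and $B\cong K_t^\ast$ on vertex set $Y$, and decompose
$$K_n^\ast \;=\; A \;\oplus\; B \;\oplus\; (\bar A \join \bar B).$$
On $A$ we use condition~(1): $K_s^\ast$ has a $(\vec{C}_{m_1},\ldots,\vec{C}_{m_\ell})$-factorization, giving $s-1$ directed 2-factors, each of which we must ``complete'' to a $(\vec{C}_{m_1},\ldots,\vec{C}_{m_k})$-factor using arcs from the other pieces. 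On $B$ we use condition~(2)(a): the subdigraph $D'\subseteq K_t^\ast$ has a $(\vec{C}_{m_{\ell+1}},\ldots,\vec{C}_{m_k})$-factorization with exactly $s-1$ directed 2-factors. Pairing these up gives $s-1$ directed 2-factors on $A\,\dot\cup\, D'$ of the correct cycle type $(\vec{C}_{m_1},\ldots,\vec{C}_{m_k})$. It remains to factorize $D'' \oplus (\bar A \join \bar B)$, which contains $t + 2st$ arcs more precisely $|A(D'')|$ plus the $2st$ mixed arcs, and this must split into $n-1-(s-1)=t$ directed 2-factors.

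The heart of the argument is the construction of those remaining $t$ factors from the data in condition~(2)(d). For each $j\in\ZZ_t$, the piece $H_j = D_1^{(j)}\,\dot\cup\,\cdots\,\dot\cup\, D_k^{(j)}$ supplies, for every index $i$ with $t_i<m_i$, a directed path $D_i^{(j)}\cong\vec P_{t_i}$ living in $Y$; and for indices with $t_i=m_i$ it already supplies a full directed cycle $\vec C_{m_i}$. The idea is to close up each path $D_i^{(j)}$ into a directed $m_i$-cycle by inserting $s_i$ vertices of $X$, alternating with $s_i$ arcs in and $s_i$ arcs out — this uses exactly $2s_i$ mixed arcs per cycle, consistent with $m_i = 2s_i + t_i$ from~(2)(c), and since a directed path $\vec P_{t_i}$ has $t_i$ vertices and $t_i-1$ arcs, splicing in $s_i$ new vertices and $2s_i$ new arcs while removing... actually one must be careful: a path on $t_i$ vertices has $t_i$ vertices, and inserting $s_i$ vertices between its two endpoints (wrapping around) yields a cycle on $t_i+s_i$ vertices using $t_i-1 + 2s_i$ arcs, which forces $m_i = t_i+s_i$ — so the correct reading is that each $D_i^{(j)}$ is a $\vec P_{t_i}$ meaning a path with $t_i$ arcs hence $t_i+1$ vertices, and the count $m_i=2s_i+t_i$ together with the insertion of $s_i$ vertices and an appropriate number of mixed arcs closes it to a $\vec C_{m_i}$; I would pin down this bookkeeping first, using the paper's convention that $\vec P_m$ has $m$ arcs. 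Over $j\in\ZZ_t$, the $\rho$-translation structure in~(2)(d) guarantees each $H_j = \rho^j(H_0)$, so the path-endpoints rotate by $\rho^j$; choosing the $X$-vertices inserted into $D_i^{(j)}$ to be a fixed multiset (independent of $j$, or rotated by a permutation of $X$) will ensure that as $j$ ranges over $\ZZ_t$, every mixed arc of $\bar A\join\bar B$ is hit exactly once — this is the analogue of the ``every vertex of $X$ is a tail and a head of exactly one arc of $F$'' observation in the proof of Theorem~\ref{thm:main-even}.

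The main obstacle — and where the real work lies — is verifying that the $t$ completed factors are genuinely a decomposition: that the union of the mixed arcs used across all $j$, together with the arcs of $D''=\bigoplus_j H_j$, is exactly $A(D'')\cup(X\times Y)\cup(Y\times X)$ with no repetition. For the $D''$-part this is immediate from~(2)(d) since $\D$ is given to be a decomposition of $D''$. For the mixed part, one must show that the incidence pattern of inserted $X$-vertices, as $j$ runs over $\ZZ_t$ and $i$ over $1,\ldots,k$, covers each ordered pair $(x,y)$ and $(y,x)$ exactly once; this will require specifying precisely which $X$-vertices get inserted where (likely: distribute $X$ as a disjoint union $X_1\,\dot\cup\,\cdots\,\dot\cup\, X_k$ with $|X_i|=s_i$, using~(2)(b), and insert the vertices of $X_i$ into the cycle built from $D_i^{(j)}$ in a fixed cyclic order, same for every $j$), and then invoking the fact that $\rho$ acts as a full $t$-cycle on $Y$ to conclude each $Y$-vertex plays each role exactly once. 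One also needs the degenerate cases handled: when $t_i=m_i$ (no insertion, $D_i^{(j)}$ already a cycle, contributing no mixed arcs and $s_i=0$), and when $\ell$-indexed cycles are entirely inside $X$ versus when $t_i=0$ would force the whole cycle into $X$ — condition~(2)(c) with $t_i\ge 0$ allows $t_i=0$, i.e. $m_i=2s_i$, in which case $D_i^{(j)}$ should be read as the empty/degenerate path and the $m_i$-cycle is built purely by... no, $s_i$ vertices of $X$ cannot form an $m_i=2s_i$ cycle alone in $\bar A\join\bar B$ — so I would check whether the hypotheses implicitly force $t_i\ge 1$ for $i>\ell$, or else treat $t_i=0$ by borrowing structure; resolving these edge cases cleanly is the last piece of the puzzle before the factorization claim follows.
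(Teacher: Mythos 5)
Your overall strategy is the same as the paper's: split $K_{s+t}^\ast$ into $K_s^\ast \du D'$ (pairing the two factorizations, each with $s-1$ factors) and $D''$ together with the mixed arcs, then build the remaining $t$ factors by closing each $D_i^{(j)}$ into a directed $m_i$-cycle through inserted $X$-vertices and using the rotation $\rho$, plus the fact that each $x\in X$ has indegree and outdegree $1$ in the starter factor, to cover every mixed arc exactly once. But the two points you explicitly leave open are genuine gaps, and on one of them you lean toward the wrong resolution. First, the insertion bookkeeping: with the paper's convention that $\vec{P}_{t_i}$ has $t_i$ arcs and hence $t_i+1$ vertices, closing it to a $\vec{C}_{m_i}$ with $m_i=2s_i+t_i$ requires inserting not only the $s_i$ vertices of $X$ but also $s_i-1$ \emph{fresh vertices of $Y$}, interleaved so that no two $X$-vertices are consecutive (there are no arcs inside $X$ available); this consumes exactly $2s_i$ mixed arcs and $(t_i+1)+s_i+(s_i-1)=m_i$ vertices. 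You never verify that enough fresh $Y$-vertices exist across all $i$; this is the computation $\sum_{i=1}^k(s_i+t_i)=\sum_i m_i-\sum_i s_i=(s+t)-s=t$, which is needed to make the recursive choice of vertices legitimate.

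Second, the case $t_i=0$ is not a defect to be legislated away: the hypotheses do not force $t_i\ge 1$, and the proposition is applied with $t_1=t_2=0$ in the proof of Lemma~\ref{lem:(2,2,4)} (OP$^\ast(2,2,4)$), which is precisely why the full proposition is needed rather than Corollary~\ref{cor:main}. Your objection that ``$s_i$ vertices of $X$ cannot form an $m_i=2s_i$ cycle alone in $\bar A\join\bar B$'' dissolves because the cycle is not built from $X$-vertices alone: take $s_i$ unused vertices of $X$ and $s_i$ unused vertices of $Y$ and alternate them, obtaining a directed $2s_i$-cycle consisting entirely of mixed arcs; here $D_i^{(0)}\cong\vec{P}_0$ is a single vertex contributing no arcs, so nothing needs replacing when passing from $\rho^j(F_0)$ to $F_j$. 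Once these two constructions are pinned down (together with the routine observation that replacing $\rho^j(D_i^{(0)})$ by $D_i^{(j)}$, which has the same source and terminus by (2)(d), keeps each $F_j$ a $(\vec{C}_{m_1},\ldots,\vec{C}_{m_k})$-factor), your covering argument for the mixed arcs goes through exactly as you sketched it.
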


\begin{proof}
Assume Conditions (1) and (2) hold. We need to show that $K_{s+t}^\ast$ admits a $(\vec{C}_{m_1},\ldots,\vec{C}_{m_k})$-factorization. Let $D=K_{s+t}^\ast= K_s^\ast \join K_t^\ast$, where
$$ V(K_s^\ast)=X=\{ x_i: i \in \ZZ_s \} \quad \mbox{ and } \quad V(K_t^\ast)=Y=\{ y_i: i \in \ZZ_t \}.$$
Let $\rho$ be the cyclic permutation $\rho=(y_0 \, y_1 \, \ldots \, y_{t-1} )$ on $X \cup Y$ that fixes the vertices in $X$ pointwise.

Construct subdigraphs $L_1,\ldots,L_k$ of $D$ recursively as follows. Assuming $L_1,\ldots,L_{i-1}$ have already been constructed, we use the subdigraph $D_i^{(0)}$ of $H_0$ to construct $L_i$ as follows.
\begin{enumerate}[(i)]
\item If $t_i=m_i$, then $D_i^{(0)} \cong \vec{C}_{m_i}$, and we let $L_i=D_i^{(0)}$.
\item If $t_i=0$, then $D_i^{(0)} \cong \vec{P}_{0}$. Choose any vertices $u_0,\ldots,u_{s_i-1} \in X - \bigcup_{j=1}^{i-1} V(L_j)$ and $v_0,\ldots,v_{s_i-1} \in Y - \bigcup_{j=1}^{i-1} V(L_j) - \bigcup_{j=i+1}^k V(D_j^{(0)})$. Then  let $L_i$ be the directed cycle
    $$L_i=u_0 \, v_0 \, u_1 \, v_1 \, \ldots \, u_{s_i-1} \, v_{s_i-1} \, u_0.$$
\item Otherwise, we have $0 < t_i < m_i$, and $D_i^{(0)}$ is a directed $t_i$-path, say $D_i^{(0)}=v_0 \, v_1 \, \ldots \, v_{t_i}$, for some
    $v_0, v_1, \ldots, v_{t_i} \in Y$. Choose any vertices $u_0,\ldots,u_{s_i-1} \in X - \bigcup_{j=1}^{i-1} V(L_j)$ and $v_{t_i+1}, v_{t_i+2}, \ldots, v_{t_i+s_i-1} \in Y - \bigcup_{j=1}^{i-1} V(L_j) - \bigcup_{j=i}^k V(D_j^{(0)})$. Then  let $L_i$ be the directed cycle
    $$L_i= D_i^{(0)} \, v_{t_i} \, u_0 \, v_{t_i+1} \, u_1 \, v_{t_i+2} \, u_2 \, \ldots \, v_{t_i+s_i-1} \, u_{s_i-1} \, v_0.$$
\end{enumerate}
Note that since $\sum_{i=1}^k s_i=s$ and
$$\sum_{i=1}^k (s_i+t_i)= \sum_{i=1}^k (m_i-s_i)= \sum_{i=1}^k m_i - \sum_{i=1}^k s_i=(s+t)-s=t,$$ the required vertices can indeed be found.

Observe that in all three cases, for each $i=1,2,\ldots,k$, the constructed digraph $L_i$ is a directed $m_i$-cycle with $s_i$ vertices in $X$ and $s_i+t_i$ vertices in $Y$. The digraphs $L_1,\ldots,L_k$ are pairwise disjoint, so $F_0=L_1 \cup \ldots \cup L_k$ is a $(\vec{C}_{m_1},\ldots,\vec{C}_{m_k})$-factor of $D$. For $j \in \ZZ_t$, obtain $F_j$ from $F_0$ by first applying $\rho^j$ to $F_0$, and then for each $i=1,2,\ldots,k$ such that $t_i>0$, replacing $\rho^j(D_i^{(0)})$ with
$D_i^{(j)}$. By Assumption (d), without loss of generality,  we have that $V(D_i^{(j)})=V(\rho^j(D_i^{(0)}))$ for each $i$, and if $D_i^{(0)}$ is a directed path, then $D_i^{(j)}$ is directed path with the same source and the same terminus as $\rho^j(D_i^{(0)})$. It follows that $F_j$ is also a $(\vec{C}_{m_1},\ldots,\vec{C}_{m_k})$-factor of $D$.

We claim that $\F =\{ F_j: j \in \ZZ_t \}$ is a $(\vec{C}_{m_1},\ldots,\vec{C}_{m_k})$-factorization of $\bar{K}_s \join D''$. Observe that for each $j \in \ZZ_t$,
$$A(F_j)= \rho^j( A_0) \cup A(H_j),$$
where $A_0= \left( A(F_0) \cap (X  \times Y) \right) \cup \left( A(F_0) \cap (Y  \times X) \right).$
For any vertex $x \in X$, the indegree and outdegree of $x$ in $F_0$ is 1. Therefore each arc incident with $x$ is covered exactly once in $\bigcup_{j=0}^{t-1} \rho^j(A_0)$. Since by assumption $\D=\{ H_0,\ldots,H_{t-1} \}$ decomposes $D''$, it follows that $\F =\{ F_j: j \in \ZZ_t \}$ decomposes $\bar{K}_s \join D''$.

Finally, let $\{ F_1^{[s]},\ldots, F_{s-1}^{[s]} \}$ and $\{ F_1^{[t]},\ldots, F_{s-1}^{[t]} \}$
be a $(\vec{C}_{m_1},\ldots,\vec{C}_{m_{\ell}})$-factorization of $\bar{K}_s^\ast$ and a $(\vec{C}_{m_{\ell+1}},\ldots,\vec{C}_{m_k})$-factorization of $D'$, respectively.
 Then
$$\F'=\{ F_i^{[s]} \cup F_i^{[t]}: i=1,2,\ldots,s-1 \}$$
is a $(\vec{C}_{m_{1}},\ldots,\vec{C}_{m_k})$-factorization of $K_s^\ast \du D'$, and $\F \cup \F'$ is a $(\vec{C}_{m_{1}},\ldots,\vec{C}_{m_k})$-factorization of $D$.
\end{proof}

%%%

Corollary~\ref{cor:main} below is a simpler (but slightly more limited) version of Proposition~\ref{prop:main}. For most of our recursive constructions, Corollary~\ref{cor:main} will suffice; however, to solve OP$^\ast(2,2,4)$ (see Lemma~\ref{lem:(2,2,4)} below), Proposition~\ref{prop:main} will be required in full generality.

\begin{cor}\label{cor:main}
Let $\ell$, $k$, and $m_1,\ldots, m_k$ be integers such that $1 \le \ell < k$ and $m_i \ge 2$ for $i=1,\ldots, k$. Let $s=m_1+\ldots+m_{\ell}$ and $t=m_{\ell+1}+\ldots+m_{k}$, and assume $s<t$.

Then OP$^\ast(m_1,\ldots,m_k)$ has a solution if the following conditions all hold.
\begin{enumerate}[(1)]
\item OP$^\ast(m_1,\ldots,m_{\ell})$ has a solution.
\item There exists a set $S \subseteq \ZZ_t^\ast$ and non-negative integers $s_1, \ldots, s_k, t_1, \ldots, t_k$ such that:
   \begin{enumerate}[(a)]
    \item $\dci(t;\ZZ_t^\ast -S)$ admits a $(\vec{C}_{m_{\ell+1}},\ldots,\vec{C}_{m_{k}})$-factorization;
    \item $s_1+ \ldots + s_k=s$;
    \item $m_i=2s_i+t_i$  for $i=1,2,\ldots,k$; and
    \item $\dci(t;S)$ admits an $S$-orthogonal $(D_1,\ldots,D_k)$-subdigraph such that, for all $i=1,2,\ldots,k$, we have that $D_i \cong \vec{P}_{t_i}$ if $t_i<m_i$, and $D_i \cong \vec{C}_{m_i}$ if $t_i=m_i$.
    \end{enumerate}
\end{enumerate}
\end{cor}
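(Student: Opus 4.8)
The plan is to derive Corollary~\ref{cor:main} as a direct specialization of Proposition~\ref{prop:main}: I will show that the circulant data in Condition~(2) of the corollary manufactures, in a canonical way, the decomposition data required by Condition~(2) of the proposition, after which the conclusion follows verbatim. Throughout I take $\rho$ to be the cyclic permutation $(y_0\,y_1\,\ldots\,y_{t-1})$ on $V(K_t^\ast)=\{y_0,\ldots,y_{t-1}\}$, which identifies $K_t^\ast$ with $\dci(t;\ZZ_t^\ast)$; under this identification $\rho$ shifts every difference-$d$ arc to a difference-$d$ arc. Condition~(1) of the two statements is literally identical, so nothing is needed there.

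First I would handle the partition of $K_t^\ast$. Set $D'=\dci(t;\ZZ_t^\ast - S)$ and $D''=\dci(t;S)$; since $\{\ZZ_t^\ast-S,\,S\}$ partitions $\ZZ_t^\ast$ and every arc of $\dci(t;\ZZ_t^\ast)$ has a well-defined difference, $\{D',D''\}$ is a decomposition of $K_t^\ast$, giving the decomposition required in Proposition~\ref{prop:main}(2). By corollary-Condition~(a), $D'$ admits a $(\vec{C}_{m_{\ell+1}},\ldots,\vec{C}_{m_k})$-factorization; I should note that any such factorization of $K_t^\ast$-type data automatically has $s-1$ directed $2$-factors, because $D'$ is $(|\ZZ_t^\ast-S|)$-regular and each directed $2$-factor uses in-/out-degree $1$ at every vertex — wait, more carefully: the number of $2$-factors equals the common in-degree of $D'$, and we need this to be $s-1$. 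This forces $|\ZZ_t^\ast - S| = s-1$, equivalently $|S| = (t-1)-(s-1) = t-s$; I would either impose this as part of the (implicit) hypothesis or observe it follows from counting arcs once the $S$-orthogonal $(D_1,\ldots,D_k)$-subdigraph of part~(d) exists, since that subdigraph has exactly $|S|$ arcs and $\sum_i(\text{arcs in }D_i)=\sum_i t_i = t-s$ by (b),(c). Either way, Proposition~\ref{prop:main}(2a) holds.

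Next, Conditions (b) and (c) are identical in the two statements, so the only real work is to turn the single $S$-orthogonal $(D_1,\ldots,D_k)$-subdigraph $H_0$ of $\dci(t;S)$ from corollary-Condition~(d) into the rotational decomposition $\D=\{H_0,\ldots,H_{t-1}\}$ of $D''$ required by proposition-Condition~(2d). I would set $H_j=\rho^j(H_0)$ for $j\in\ZZ_t$. Then $V(H_j)=\rho^j(V(H_0))$ automatically. Since $H_0$ is $S$-orthogonal, it contains exactly one arc of each difference in $S$; applying $\rho^j$ permutes arcs within each difference class, and as $j$ ranges over $\ZZ_t$ the translates $\rho^j(\text{arc of difference }d)$ cover all $t$ arcs of difference $d$ exactly once. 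Hence $\{H_0,\ldots,H_{t-1}\}$ decomposes $\dci(t;S)=D''$. Writing $H_0=D_1\du\ldots\du D_k$ with $D_i\cong\vec{P}_{t_i}$ (or $\vec{C}_{m_i}$ when $t_i=m_i$), we get $H_j=\rho^j(D_1)\du\ldots\du\rho^j(D_k)$, so $D_i^{(j)}:=\rho^j(D_i)$ is of the required isomorphism type, and if $D_i^{(0)}=D_i$ is an $(x,y)$-path then $D_i^{(j)}=\rho^j(D_i)$ is a $(\rho^j(x),\rho^j(y))$-path — exactly the consistency demanded in the last bullet of proposition-Condition~(2d). Thus all hypotheses of Proposition~\ref{prop:main} are met, and it yields a $(\vec{C}_{m_1},\ldots,\vec{C}_{m_k})$-factorization of $K_{s+t}^\ast$, completing the proof.

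The only genuinely delicate point — the ``main obstacle,'' though it is mild — is bookkeeping the count of $2$-factors: reconciling ``$s-1$ directed $2$-factors'' in Proposition~\ref{prop:main}(2a) with the corollary's bare requirement that $\dci(t;\ZZ_t^\ast-S)$ merely \emph{admit} a $(\vec{C}_{m_{\ell+1}},\ldots,\vec{C}_{m_k})$-factorization. I expect the clean resolution is the degree/arc-count identity above: $S$-orthogonality of the subdigraph in part~(d) pins down $|S|=t-s$, hence $|\ZZ_t^\ast-S|=s-1$, hence every $(\vec{C}_{m_{\ell+1}},\ldots,\vec{C}_{m_k})$-factorization of $\dci(t;\ZZ_t^\ast-S)$ has precisely $s-1$ directed $2$-factors. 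I would state this as a one-line remark inside the proof rather than belabor it. Everything else is the routine translation ``circulant with connection set $S$ $\leftrightarrow$ rotational decomposition under the length-$t$ shift,'' which is standard and needs no computation.
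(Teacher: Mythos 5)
Your proposal is correct and follows essentially the same route as the paper's proof: take $D'=\dci(t;\ZZ_t^\ast-S)$, $D''=\dci(t;S)$, rotate the $S$-orthogonal subdigraph by $\rho^j$ to obtain the decomposition $\{H_j\}$ required in Proposition~\ref{prop:main}(2d), and settle the count of directed $2$-factors via $\sum_i t_i=\sum_i(m_i-2s_i)=t-s$, so $|S|=t-s$ and $|\ZZ_t^\ast-S|=s-1$. The paper resolves the ``$s-1$ factors'' bookkeeping by exactly this arc count, so no extra hypothesis is needed, and your argument matches it in substance.
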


\begin{proof}
Assume Conditions (1) and (2) hold, label the vertices of $D=K_s^\ast \join K_t^\ast$ as in the proof of Proposition~\ref{prop:main}, and let $\rho=(y_0 \, y_1 \, \ldots \, y_{t-1} )$. Let $D'=\dci(t;\ZZ_t^\ast -S)$ and $D''=\dci(t;S)$, so $K_t^\ast=D' \oplus D''$.
Let $H$ be an $S$-orthogonal $(D_1,\ldots,D_k)$-subdigraph of $\dci(t;S)$ satisfying Condition (2d). Then $\D=\{ \rho^j(H): j \in \ZZ_t \}$ is a decomposition of $\dci(t;S)$ satisfying Condition (2d) of Proposition~\ref{prop:main}.

Observe that
$$|A(H)|=\sum_{i=1}^k t_i=\sum_{i=1}^k (m_i-2s_i)=\sum_{i=1}^k m_i -2\sum_{i=1}^k s_i= (s+t)-2s=t-s, $$
and so $|S|=t-s$ and $|\ZZ_t^\ast-S|=t-1-(t-s)=s-1$. Thus any  $(\vec{C}_{m_{\ell+1}},\ldots,\vec{C}_{m_{k}})$-factorization of $D'$ indeed contains exactly $s-1$ directed 2-factors.

By Proposition~\ref{prop:main}, it follows that OP$^\ast(m_1,\ldots,m_k)$ has a solution.
\end{proof}

\begin{lemma}\label{lem:(2,2,4)}
OP$^\ast(2,2,4)$ has a solution.
\end{lemma}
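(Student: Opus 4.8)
The plan is to apply Proposition~\ref{prop:main} with $\ell=1$, $k=3$, $m_1=2$, $m_2=m_3=\ldots$ — wait, more precisely with $(m_1,m_2,m_3)=(2,2,4)$, taking the single cycle $\vec{C}_2$ on the ``small'' side and the $(\vec{C}_2,\vec{C}_4)$-factor on the ``large'' side, so that $s=2$ and $t=6$. Condition (1) then asks only that OP$^\ast(2)$ has a solution, which is trivial (a single $\vec{C}_2$ is all of $K_2^\ast$). The bulk of the work is to verify Condition (2): exhibit a decomposition $\{D',D''\}$ of $K_6^\ast$ together with non-negative integers $s_1,s_2,s_3$ and $t_1,t_2,t_3$ satisfying (a)--(d).

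\textbf{Key steps.} First I would record the arithmetic that pins down the parameters. We need $s_1+s_2+s_3=s=2$ and $m_i=2s_i+t_i$, and since the $\vec{C}_2$ on the small side must acquire both of its vertices from $X$, the natural choice is $s_1=1$, so $t_1=0$ and $D_1\cong\vec{P}_0$; then $s_2+s_3=1$. The remaining two cycles $\vec{C}_2,\vec{C}_4$ have total length $4$, and must contribute $t-s=4$ arcs' worth (counting $\sum t_i = t-s$) distributed so that $\sum(s_i+t_i)=t=6$. Taking $s_2=0$, $s_3=1$ gives $t_2=2=m_2$ and $t_3=2$, i.e. $D_2\cong\vec{C}_2$ lives entirely inside $K_6^\ast$ while $D_3\cong\vec{P}_2$ is a directed $2$-path to be closed up into a $\vec{C}_4$ using one vertex of $X$. (Alternatively $s_2=1,s_3=0$ gives $t_2=0$, $t_3=4=m_3$, so $\vec{C}_4$ sits inside $K_6^\ast$ and the $\vec{C}_2$ is built from $X$; either will do, but one may work better.) Second, I would choose the cyclic permutation $\rho=(y_0\,y_1\,\ldots\,y_5)$ and look for a starter subdigraph $H_0=D_1^{(0)}\du D_2^{(0)}\du D_3^{(0)}$ of $K_6^\ast$ whose arcs, together with those of $\rho(H_0),\ldots,\rho^5(H_0)$, tile exactly the arcs of $D''$; equivalently, $H_0$ should contain exactly one arc of each difference in some chosen subset $S\subseteq\ZZ_6^\ast$ with $|S|=t-s=4$, and $D''=\dci(6;S)$, $D'=\dci(6;\ZZ_6^\ast-S)$. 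Since $|\ZZ_6^\ast-S|=1$, $D'$ is a single directed Hamilton cycle $\vec{C}_6$ of $K_6^\ast$ — but I need $D'$ to have a $(\vec{C}_2,\vec{C}_4)$-factorization with $s-1=1$ directed $2$-factor, i.e.\ $D'$ itself must be a single $(\vec{C}_2,\vec{C}_4)$-factor. A directed Hamilton cycle is \emph{not} a $(\vec{C}_2,\vec{C}_4)$-factor, so Corollary~\ref{cor:main} with this split fails; this is exactly why the lemma's remark says Proposition~\ref{prop:main} is needed in full generality. So instead I would directly specify $\{D',D''\}$: let $D'$ be an explicit $(\vec{C}_2,\vec{C}_4)$-factor of $K_6^\ast$ (the single directed $2$-factor for $D'$), and $D''=K_6^\ast\setminus A(D')$ the remaining $30-6=24$ arcs, then find a $\rho$-invariant decomposition $\D=\{H_0,\ldots,H_5\}$ of $D''$ with each $H_j\cong \vec{P}_0\du\vec{C}_2\du\vec{P}_2$ (or the other profile) satisfying the path-endpoint equivariance in (2d).

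\textbf{Main obstacle.} The crux — and where I expect to spend all the effort — is condition (2d): producing the concrete starter $H_0$ in $K_6^\ast$ so that the six rotates partition $D''$ exactly, with the $\vec{C}_2$-component and the $\vec{P}_2$-component placed so that rotation never creates overlaps and the path endpoints behave correctly. Because $t=6$ is small and $\rho$ has order $6$, I would set this up as a difference-based search: assign to $H_0$ one arc of each difference in a chosen $S=\{\pm d_1,\pm d_2\}\subseteq\ZZ_6^\ast$ (so $S$ has size $4$, matching $|S|=t-s$), say $S=\{1,5,2,4\}=\{\pm1,\pm2\}$, leaving $\ZZ_6^\ast-S=\{3\}$ for $D'$ — but difference $3$ alone gives three disjoint $\vec{C}_2$'s, again not a $(\vec{C}_2,\vec{C}_4)$-factor. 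This confirms that $D'$ cannot be a single-difference circulant, so I must abandon the circulant ansatz for $D'$ entirely and hand-pick $D'$ as an arbitrary $(\vec{C}_2,\vec{C}_4)$-factor while still keeping $D''$ $\rho$-invariant — which forces $A(D')$ itself to be $\rho$-invariant, impossible for a $6$-arc set under an order-$6$ rotation unless it is a union of full orbits, and a $(\vec{C}_2,\vec{C}_4)$-factor on $6$ labelled vertices is generally \emph{not} $\rho$-invariant. The resolution, and the real content of the proof, is that $D''$ need not be $\rho$-invariant arc-set-wise as long as $\D$ is a $\rho$-invariant \emph{family}: one picks $H_0$ freely, sets $H_j=\rho^j(H_0)$ with the path-repair of (2d), lets $D''=\bigoplus_j H_j$ (automatically $\rho$-invariant as a set), and then $D'=K_6^\ast\setminus D''$ must happen to be decomposable into a single $(\vec{C}_2,\vec{C}_4)$-factor. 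So the search is: choose $H_0=\{y_0\}\du(\text{a }\vec{C}_2)\du(\text{a }\vec{P}_2)$ in $K_6^\ast$ covering $6$ arcs, no two in the same $\rho$-orbit, such that the complement of its orbit-closure is a $(\vec{C}_2,\vec{C}_4)$-factor; I would do this by a short exhaustive check (at most a few hundred candidates), exhibit the successful $H_0$, $D'$, and the endpoint-adjusted $H_j$'s explicitly, and then invoke Proposition~\ref{prop:main}. If no such $H_0$ exists I would fall back on building the eleven directed $2$-factors of a $(\vec{C}_2,\vec{C}_2,\vec{C}_4)$-factorization of $K_8^\ast$ by hand (as is done for OP$^\ast(4,8)$ in Lemma~\ref{lem:special}), but I expect the Proposition~\ref{prop:main} route to succeed.
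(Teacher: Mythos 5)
You set up the problem exactly as the paper does: Proposition~\ref{prop:main} with $\ell=1$, $k=3$, $s=2$, $t=6$, Condition (1) trivially satisfied by OP$^\ast(2)$, and you correctly diagnose why Corollary~\ref{cor:main} cannot work here (with $|S|=t-s=4$ the complement $\dci(6;\ZZ_6^\ast-S)$ is a single difference class, hence a $\vec{C}_6$, two $\vec{C}_3$'s, or three $\vec{C}_2$'s, never a $(\vec{C}_2,\vec{C}_4)$-factor). But your proposed way out then collapses back into the very impossibility you identified. You say one may ``pick $H_0$ freely, set $H_j=\rho^j(H_0)$\ldots, let $D''=\bigoplus_j H_j$ (automatically $\rho$-invariant as a set)'', and your final search asks for an $H_0$ with no two arcs in the same $\rho$-orbit whose orbit-closure has a $(\vec{C}_2,\vec{C}_4)$-factor as complement. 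If the $H_j$ really are the rotations of a single starter, then $D''$, and hence $D'$, is $\rho$-invariant, and you proved two sentences earlier that a $\rho$-invariant six-arc $D'$ is a single difference class and never a $(\vec{C}_2,\vec{C}_4)$-factor; so the search you describe is guaranteed to come up empty (also, $H_0$ has $t-s=4$ arcs, not $6$). The point of needing Proposition~\ref{prop:main} ``in full generality'' is precisely that Condition (2d) does \emph{not} force $H_j=\rho^j(H_0)$: it only forces $V(H_j)=\rho^j(V(H_0))$ and equivariance of the \emph{endpoints} of the path components, while the cycle components of different $H_j$'s may be completely unrelated. Consequently $D''=\bigcup_j A(H_j)$ need not be $\rho$-invariant, and that is exactly what makes a non-circulant $D'$ possible.

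This is what the paper's proof exploits and what is missing from yours: an explicit construction. The paper takes $s_1=s_2=1$, $s_3=0$, $t_1=t_2=0$, $t_3=4$ (both $2$-cycles straddle $X$ and $Y$, the $4$-cycle lies in $Y$, so each $H_j\cong\vec{P}_0\du\vec{P}_0\du\vec{C}_4$), chooses the concrete $(\vec{C}_2,\vec{C}_4)$-factor $D'=y_1\,y_4\,y_1\,\cup\,y_0\,y_5\,y_2\,y_3\,y_0$ with $D''$ its complement, and then writes down three genuinely different starters $H_0,H_1,H_2$ (whose $4$-cycles are not rotations of one another) together with $H_3=\rho(H_2)$, $H_4=\rho^2(H_2)$, $H_5=\rho^3(H_2)$; only the isolated-vertex ($\vec{P}_0$) components rotate as prescribed by (2d). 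Without such an explicit family --- or the explicit $(\vec{C}_2,\vec{C}_2,\vec{C}_4)$-factorization of $K_8^\ast$ that you mention only as an unexecuted fallback --- the lemma is not established, so as written your argument has a genuine gap: the one search you commit to cannot succeed, and the construction that would succeed is never produced.
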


\begin{proof}
We use Proposition~\ref{prop:main} with $m_1=m_2=2$, $m_3=4$, $\ell=1$ and $k=3$. Hence $s=2$ and $t=6$. By Proposition~\ref{prop:main}, since OP$^\ast(2)$ has a solution, it suffices to find a decomposition $\{ D',D''\}$ of $K_6^\ast$ and non-negative integers $s_1,s_2,s_3,t_1,t_2,t_3$ satisfying Condition~(2). We take $s_1=s_2=1$, $s_3=0$, $t_1=t_2=0$, and $t_3=4$, so Conditions (2b) and (2c) hold.

Let  $V(K_6^\ast)=\{ y_0,\ldots,y_5 \}$ and  $\rho=( y_0 \, y_1 \, \ldots \, y_5)$. Let
$$D'=y_1 \, y_4 \, y_1 \;  \cup \; y_0 \, y_5 \, y_2 \, y_3 \, y_0,$$
and let $D''$ be its complement in $K_6^\ast$. Note that $D'$ is a $(\vec{C}_2,\vec{C}_4)$-factor of $K_6^\ast$, thus satisfying Condition~(2a).

Define the following $(\vec{P}_0,\vec{P}_0,\vec{C}_4)$-subdigraphs of $D''$:
\begin{eqnarray*}
H_0 &= y_0 \;\cup\; y_1 \;\cup\; y_2 \, y_5 \, y_3 \, y_4 \, y_2, \qquad\qquad\qquad H_3 &=\rho(H_2), \\
H_1 &= y_1 \;\cup\; y_2 \;\cup\; y_3 \, y_5 \, y_4 \, y_0 \, y_3, \qquad\qquad\qquad
H_4 &=\rho^2(H_2) \\
H_2 &= y_2 \;\cup\; y_3 \;\cup\; y_4 \, y_5 \, y_1 \, y_0 \, y_4, \qquad\qquad\qquad
H_5 &=\rho^3(H_2).
\end{eqnarray*}
It is not difficult to verify that $\{ H_i: i \in \ZZ_6 \}$ is a decomposition of $D''$ satisfying Condition~(2d) of Proposition~\ref{prop:main}. Hence OP$^\ast(2,2,4)$ has a solution.
\end{proof}

%%%%%%%%%%%%%%%%%%%%

\section{Extending the 2-factor by a single cycle: \\ technical lemmas}

In this section, we shall accomplish the heavy technical work that, using Corollary~\ref{cor:main}, will allow us to obtain a solution to OP$^\ast(m_1,\ldots,m_{\ell},t)$ from
a solution to OP$^\ast(m_1,\ldots,m_{\ell})$  when $t$ is sufficiently large; see Theorem~\ref{the:main-ext}(1).

\begin{lemma}\label{lem:long1}
Let $s$ and $t$  be integers such that $2 \le s < t$, and $s \ne 4$ if $t$ is even. Furthermore,
let $$\textstyle D=\{ \pm 1, \pm 2, \ldots, \pm \frac{s-1}{2} \}$$
if $s$ is odd, and
$$\textstyle D=\{ -1 \} \cup \{ \pm 2, \pm 3, \ldots, \pm \frac{s}{2} \}$$
if $s$ is even.
Then $\dci(t;D)$ admits a $\vec{C}_t$-factorization.
\end{lemma}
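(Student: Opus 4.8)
The plan is to realize $\dci(t;D)$ as an arc-disjoint union of directed Hamilton cycles by pairing up the differences in $D$ appropriately and invoking the known Hamilton-cycle decomposition theorems for undirected circulants quoted as Theorem~\ref{the:BerFavMah} and Theorem~\ref{the:Wes}. The key observation is that $\dci(t;\{a,-a\})$ decomposes into two directed $t$-cycles precisely when $\gcd(t,a)=1$, and more generally that a Hamilton decomposition of the undirected circulant $\ci(t;\{\pm a_1,\ldots,\pm a_r\})$ into $r$ Hamilton cycles ``lifts'' to a decomposition of $\dci(t;\{\pm a_1,\ldots,\pm a_r\})$ into $2r$ directed Hamilton cycles, by orienting each undirected Hamilton cycle in both ways. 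So I would first reduce the problem to finding a Hamilton decomposition of an undirected circulant: if $s$ is odd, $D=-D$ and $\dci(t;D)$ is the ``bidirected'' version of $\ci(t;\{\pm1,\ldots,\pm\frac{s-1}{2}\})$; if $s$ is even, $D$ is not symmetric (it contains $-1$ but not $+1$), so I would instead split off the difference $-1$ and handle the single directed Hamilton cycle $\dci(t;\{-1\})$ separately (it is a $\vec C_t$-factor since $\gcd(t,1)=1$), and then treat $\dci(t;\{\pm2,\ldots,\pm\frac{s}{2}\})$, whose connection set is symmetric, via the undirected theory.

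Next I would organize the symmetric differences into groups. For $s$ odd, the set $\{1,2,\ldots,\frac{s-1}{2}\}$ has $\frac{s-1}{2}$ elements; I want to partition $\{\pm1,\ldots,\pm\frac{s-1}{2}\}$ into blocks of size $4$ (pairs $\{\pm a,\pm b\}$) and possibly one leftover block of size $2$ (a single pair $\{\pm a\}$), choosing the pairing so that each $4$-element circulant $\ci(t;\{\pm a,\pm b\})$ is connected — then Theorem~\ref{the:BerFavMah} gives it two Hamilton cycles, hence four directed Hamilton cycles. A block $\ci(t;\{\pm a,\pm b\})$ of degree $4$ is connected iff $\gcd(t,a,b)=1$; so it suffices to pair the differences so that every pair $\{a,b\}$ used has $\gcd(t,a,b)=1$, which is easy since $1\in D$ when $s$ is odd — I can always pair the difference $1$ (or another difference coprime to $t$) with a ``bad'' difference. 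When the number of pairs $\frac{s-1}{2}$ is odd there is one leftover pair $\{\pm a\}$; if $\gcd(t,a)=1$ this is directly a $\vec C_t$-factorization of $\dci(t;\{\pm a\})$, and if not I would instead peel off a block of size $6$ handled by Theorem~\ref{the:Wes} (which needs $t$ even and the gcd condition $\gcd(t,a,b)\gcd(t,c)=2$) or re-pair to avoid the problem. The even case is analogous after removing $-1$: the symmetric part is $\{\pm2,\ldots,\pm\frac{s}{2}\}$ with $\frac{s}{2}-1$ pairs, and I pair them into $4$-blocks plus possibly a leftover, using Theorem~\ref{the:BerFavMah} and Theorem~\ref{the:Wes}; the excluded value $s=4$ for $t$ even is exactly the degenerate case where the symmetric part is the single pair $\{\pm2\}$ with $\gcd(t,2)=2\neq1$, so no $\vec C_t$-factorization exists — consistent with the hypothesis.

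The main obstacle, I expect, is the careful case analysis needed to guarantee connectedness of every $4$-element (or the gcd condition of every $6$-element) circulant block while exhausting $D$, especially when $\frac{s-1}{2}$ (resp.\ $\frac{s}{2}-1$) is odd and the natural leftover pair $\{\pm a\}$ has $\gcd(t,a)>1$. Handling this cleanly will likely require splitting into subcases by the parity of $s$ and of $t$ and by small residues, and invoking Theorem~\ref{the:Wes} for certain $6$-element blocks when $t$ is even; one must also double-check that when $t$ is odd Theorem~\ref{the:Wes} is unavailable, so the pairing into $4$-blocks must work on its own, which it does because $1\in D$ (for $s$ odd) or $2\in D$ with a suitable coprime partner can be arranged, and $t$ odd forces no difference to share an even factor issue of the bad kind. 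The remaining verification — that in every case the directed Hamilton cycles so produced are pairwise arc-disjoint and together cover every arc of $\dci(t;D)$ exactly once — is routine, since arc-disjointness across blocks follows from the differences being distinct, and within a block it follows from the undirected Hamilton decomposition together with the two opposite orientations covering each undirected edge's two arcs once each.
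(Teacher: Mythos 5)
Your proposal follows essentially the same route as the paper: split off the difference $-1$ (or $\pm 1$), lift Hamilton decompositions of the undirected circulants given by Theorems~\ref{the:BerFavMah} and \ref{the:Wes} by orienting each cycle in both directions, and partition the remaining symmetric differences into small blocks satisfying the connectivity/gcd conditions. The paper instantiates the pairing you leave open simply by grouping consecutive differences $\{2,3\},\{4,5\},\ldots$ (with one triple $\{2,3,4\}$ handled by Theorem~\ref{the:Wes} when $t$ is even, or split as $\{2\}\cup\{3,4\}$ when $t$ is odd), which makes the connectivity condition automatic and avoids the leftover-singleton and ``bad difference'' worries in your sketch.
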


\begin{proof}
Let $k=\lfloor \frac{s}{2} \rfloor$ and $T=\{ 2,3, \ldots, k \}$.
If $k \ge 3$, then $T$ has a partition
$$\textstyle \P = \left\{ \{ 2,3 \}, \{ 4,5 \}, \ldots, \{ k-1, k \} \right\}$$
or
$$\textstyle \P = \left\{ \{ 2,3,4 \}, \{ 5,6 \}, \ldots, \{ k-1, k \} \right\}.$$
In either case, for each $S \in \P$, the circulant graph $\ci(t;S \cup (-S))$ has a decomposition into Hamilton cycles. This conclusion follows from Theorem~\ref{the:BerFavMah} if $|S|=2$, and from Theorem~\ref{the:Wes} if $|S|=3$ and $t$ is even. If $|S|=3$ and $t$ is odd, then we first write $\{ 2,3,4 \}=\{ 2 \} \cup \{ 3,4 \}$, and use a similar reasoning. Directing each $t$-cycle in this decomposition of $\ci(t;S \cup (-S))$ in both directions, we obtain a $\vec{C}_t$-factorization of  $\dci(t;S \cup (-S))$. Since $\dci(t;D)$ decomposes into directed circulants of this form and either $\dci(t; \{ -1 \})$ or $\dci(t; \{ \pm 1 \})$, each of which also admits a $\vec{C}_t$-factorization, the result follows.

If $k \le 2$, then $2 \le s \le 5$. Since $t$ is odd if $s=4$, it can be established similarly that in each of these cases $\dci(t;D)$ admits a $\vec{C}_t$-factorization.
\end{proof}

\begin{lemma}\label{lem:long2}
Let $a$, $s$, and $t$ be integers such that $2 \le s < t$, $s \ne 3$ if $t$ is even, $0 \le a \le \min \{ \lfloor \frac{s}{3} \rfloor, 2\lfloor \frac{t}{2} \rfloor -s \}$, and $a \equiv s \pmod{2}$.
Furthermore, let $$\textstyle S=\{ \pm \frac{s+1}{2}, \pm \frac{s+3}{2}, \ldots, \pm \lfloor \frac{t}{2} \rfloor \}$$
if $s$ is odd, and
$$\textstyle S=\{ 1 \}  \cup \{ \pm (\frac{s}{2}+1), \pm (\frac{s}{2}+2), \ldots, \pm \lfloor \frac{t}{2} \rfloor \}$$
if $s$ is even.

Then $\dci(t;S)$ admits an $S$-orthogonal $(\vec{P}_1^{\langle a \rangle},\vec{P}_{t-s-a})$-subdigraph.
\end{lemma}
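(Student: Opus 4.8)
The plan is to exhibit the required $S$-orthogonal $(\vec{P}_1^{\langle a \rangle},\vec{P}_{t-s-a})$-subdigraph of $\dci(t;S)$ explicitly, by choosing one arc of each difference in $S$ and arranging those arcs into $a$ disjoint arcs (the $\vec{P}_1$'s) plus one directed path of length $t-s-a$. First I would record the cardinality count: in both the odd and even cases $|S|=2(\lfloor t/2\rfloor-\lceil s/2\rceil)+[s\text{ even}]$, and one checks this equals $(t-s-a)+a=t-s$, using $a\equiv s\pmod 2$; so the arc budget is exactly right. Note also that the hypothesis $a\le 2\lfloor t/2\rfloor-s$ guarantees $t-s-a\ge 0$, so the long path has a legitimate (non-negative) length, and $a\le\lfloor s/3\rfloor$ will give enough ``room'' to peel off $a$ short paths without them colliding with each other or with the long path.

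The core construction is for the long path $\vec{P}_{t-s-a}$. When $s$ is odd, the differences available (after removing the $a$ to be used by the short paths) form a set of the shape $\{\pm d: d\in I\}$ for an interval $I$ of consecutive integers roughly from $(s+1)/2$ (shifted up by the short-path choices) to $\lfloor t/2\rfloor$; when $s$ is even, there is additionally the lone difference $1$. The standard trick is a ``zig-zag'': starting from $0$, alternately add the largest remaining positive difference and subtract the next, i.e. go $0,\;d_{\max},\;d_{\max}-d_{\max-1},\;\ldots$, so that consecutive vertices differ by $+d$ then $-d'$ with $d,d'$ consecutive; the partial sums telescope to small values and stay within a narrow band, so the vertices visited are all distinct modulo $t$. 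In the even case the extra difference $1$ is absorbed at one end of the zig-zag. This is a classical construction (it is exactly how one builds $S$-orthogonal paths in circulant graphs, and is morally the same device as in the proof of Lemma~\ref{lem:long1}), so I would set it up carefully once and then verify distinctness of the vertices by a direct estimate on the partial sums.

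For the $a$ short paths $\vec{P}_1$: each is a single arc, and I would take them to be arcs of differences $\pm\frac{s+1}{2},\pm\frac{s+3}{2},\ldots$ (the smallest-magnitude differences in $S$), $a$ of them, paired off as $\{+d,-d\}$ so that an arc $u\to u+d$ and $u'\to u'-d$ can be chosen with all $2a$ endpoints distinct and disjoint from the vertex set of the long path — here is where $a\le\lfloor s/3\rfloor$ is used, since it forces the short differences to be small relative to $t$ and leaves a block of $t$ vertices large enough to place everything. A clean way is to place all short-path arcs near one designated vertex (say using vertices $0,1,2,\ldots$) and run the long path's zig-zag through vertices in a disjoint residue window.

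The main obstacle I anticipate is the bookkeeping in the boundary/small cases: when $s$ is small (so $a=0$ or $a\in\{1,2,3\}$ is forced to be very restricted), when $t-s-a$ is $0$ or $1$, and when $t$ is even versus odd (the even/odd of $t$ changes whether $\lfloor t/2\rfloor$ contributes a difference $d$ with $2d\equiv 0$, which must not be used twice). The hypothesis $s\ne 3$ when $t$ is even is presumably there precisely to avoid one such degenerate configuration, so I would treat the generic range by the zig-zag argument and then check the finitely many small $(s,a,t\bmod 2)$ exceptions by hand, much as Lemma~\ref{lem:long1} disposes of $k\le 2$ separately.
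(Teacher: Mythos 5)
There is a genuine gap, and it sits at the heart of your construction rather than in the boundary bookkeeping. Except for the special elements ($1$ when $s$ is even, and $\frac{t}{2}$ when $t$ is even), $S$ contains \emph{both} $+d$ and $-d$ for every magnitude $d$ from about $\frac{s}{2}$ up to $\lfloor \frac{t}{2}\rfloor$. After you hand the $a$ single arcs the smallest magnitudes, your long path must therefore traverse one arc of difference $+d$ \emph{and} one of difference $-d$ for every remaining magnitude $d$ (and, when $a$ is odd, also absorb one unpaired sign left over from the short arcs). The zig-zag you describe, $0,\ d_{\max},\ d_{\max}-d_{\max-1},\ldots$, consumes each magnitude only once, with the sign forced by its position, so it covers only one of $\pm d$ per magnitude; you never say how the complementary signs are picked up, and a second pass of the same type would re-enter the vertices already visited. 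Relatedly, the claim that ``the partial sums stay within a narrow band, so the vertices are distinct'' cannot be the right distinctness argument here: the long path has $t-s-a+1$ vertices, which is most of $\ZZ_t$ once $t$ is large compared with $s$, so it cannot stay in a narrow band. The paper resolves exactly this point with a two-phase path $P(R,y_k)$: it climbs using differences $d_1,-d_2,d_3,\ldots$, crosses the circle with the single arc of difference $\lfloor \frac{t}{2}\rfloor$, and descends using the opposite signs $\ldots,d_2,-d_1$; that is what allows one path to use both signs of every magnitude while keeping its vertex set under control.

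A second, quantitative problem is your placement of the $a$ short arcs on the \emph{smallest} differences ``near one designated vertex''. Any path that uses both signs of all the larger magnitudes occupies, up to a couple of vertices, everything outside two near-antipodal windows whose width is roughly its smallest difference; if the smallest magnitudes (around $\frac{s+1}{2},\ldots,\frac{s+a}{2}$) are reserved for the short arcs, those windows have width about $\frac{s+a}{2}$, and an arc of difference about $\frac{s+a}{2}$ does not fit with both endpoints inside one window, while it is far too short to straddle the two windows, which are about $\frac{t}{2}$ apart. This is why the paper makes the opposite choice: the $a$ single arcs are given the \emph{largest} differences, close to $\frac{t}{2}$, so each has one endpoint near $y_0$ and the other near $y_{t/2}$, i.e.\ one endpoint in each free window, and the hypothesis $a\le\lfloor\frac{s}{3}\rfloor$ is precisely what keeps these $2a$ endpoints clear of the path. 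So while the overall shape of your plan (one long orthogonal path plus $a$ orthogonal arcs, with disjointness enforced by windows) matches the paper, the two design decisions you actually make --- a one-pass, narrow-band zig-zag for the path, and the smallest differences for the single arcs --- are the ones that fail; repairing them essentially forces you into the paper's construction, including its four parity cases and the separate gadgets for $a$ close to $t-s$ and for the small exceptional pairs $(s,a)$.
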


\begin{proof}
First observe that in all cases, $|S|=t-s$. Let the vertex set of the circulant digraph $\dci(t;S)$ be $V=\{ y_i: i \in \ZZ_t \}$.

For any $0 \le \ell \le \frac{t-s-1}{2}$, let $R$ be any subset of $S$ of the form
$$\textstyle R=\{ \pm d_i: i=1,2,\ldots,\ell \} \cup \{ \lfloor \frac{t}{2} \rfloor \} \quad \mbox{with }
1 \le d_1 < d_2 < \ldots < d_\ell < \lfloor \frac{t}{2} \rfloor. \eqno(\star)$$
For any $k \in \ZZ_t$, we then define a directed walk $P(R,y_k)$ as follows:
\begin{eqnarray*}
P(R,y_k) &=& y_k \, y_{k+d_1} \, y_{k+d_1-d_2} \, y_{k+d_1-d_2+d_3} \, y_{k+d_1-d_2+d_3-d_4} \, \ldots \, y_{k+\sum_{i=1}^\ell (-1)^{i-1} d_i} \, \\
&& y_{k+\sum_{i=1}^\ell (-1)^{i-1} d_i+ \lfloor \frac{t}{2} \rfloor}
\, y_{k+\sum_{i=1}^{\ell-1} (-1)^{i-1} d_i+\lfloor \frac{t}{2} \rfloor}
\, y_{k+\sum_{i=1}^{\ell-2} (-1)^{i-1} d_i+\lfloor \frac{t}{2} \rfloor} \ldots
\, y_{k+d_1+\lfloor \frac{t}{2} \rfloor}
\, y_{k+\lfloor \frac{t}{2} \rfloor}.
\end{eqnarray*}
Observe that $P(R,y_k)$ is actually a directed path that successively traverses arcs of the following differences:
$$\textstyle d_1, -d_2, d_3, -d_4, \ldots, (-1)^{\ell-1}d_{\ell}, \lfloor \frac{t}{2} \rfloor, (-1)^{\ell}d_{\ell}, (-1)^{\ell-1}d_{\ell-1}, \ldots, d_2, -d_1.$$
Moreover, its vertex set is contained in the set
$$\textstyle \{ y_i: k+d_1 \le i \le k+\lfloor \frac{t}{2} \rfloor \} \cup \{ y_i: k+d_1-\lceil \frac{t}{2} \rceil \le i \le k \}.$$

\medskip

{\sc Case 1}: $t$ and $s$ are both even. Then $S=\{ 1 , \pm (\frac{s}{2}+1), \pm (\frac{s}{2}+2), \ldots, \pm (\frac{t}{2}-1), \frac{t}{2}\}.$ For $i=-\frac{a}{2}+1, -\frac{a}{2}+2,\ldots,0,1,\ldots,\frac{a}{2}$,
define the  directed 1-path
$$\textstyle A_i=y_i\, y_{\frac{t}{2}-i+1}.$$
Observe that the arc in $A_i$ is of difference $\frac{t}{2}-(2i-1)$ if $i > 0$, and of difference $-(\frac{t}{2}+(2i-1))$ if $i \le 0$. Thus $A_{-\frac{a}{2}+1},\ldots,A_{\frac{a}{2}}$ jointly contain exactly one arc of each difference in
$$\textstyle T=\{ \pm (\frac{t}{2}-1), \pm (\frac{t}{2}-3),\ldots, \pm (\frac{t}{2}-(a-1)) \}.$$
 Moreover, the $A_i$ are pairwise disjoint, so $A= \bigcup_{i=-\frac{a}{2}+1}^{\frac{a}{2}}$ is a $T$-orthogonal $(\vec{P}_1^{\langle a \rangle})$-subdigraph of $\dci(t; \ZZ_t^\ast)$. Its vertex set is
 $$\textstyle V(A)=\{ y_i: -\frac{a}{2}+1 \le i \le \frac{a}{2} \} \cup
\{ y_i: \frac{t}{2}-\frac{a}{2}+1 \le i \le \frac{t}{2}+\frac{a}{2} \}.$$

\smallskip

\begin{figure}[t]
\centerline{\includegraphics[scale=0.6]{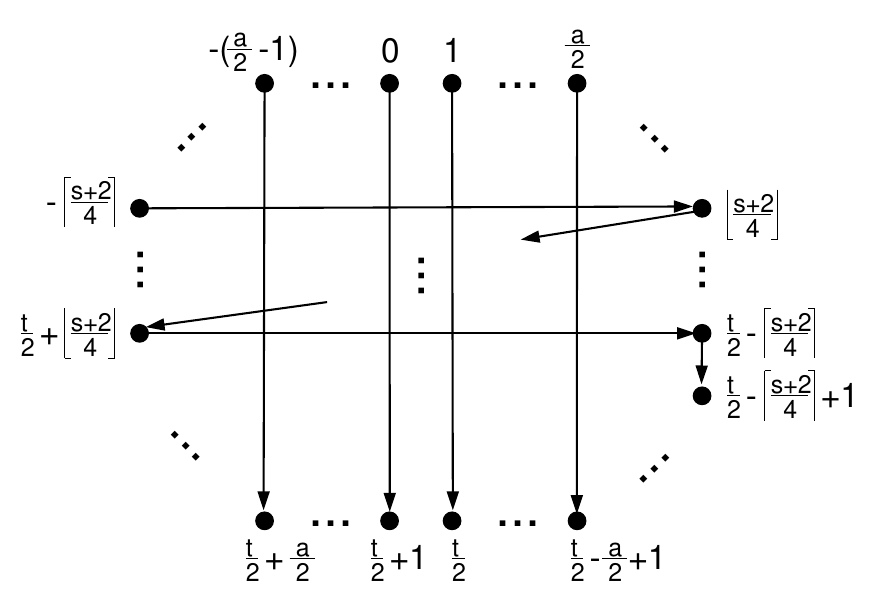}}
\caption{Lemma~\ref{lem:long2}, Subcase 1.1. (Only the subscripts of the vertices are specified.)}
\label{fig:Lee1}
\end{figure}

{\sc Subcase 1.1}: $a \le \frac{t-s}{2}-1$. If $a>0$, then
$T \subseteq S$ and $\frac{s+2}{2} \not\in T$.
Let $R=S-T-\{ 1 \}$. Then the directed path $P(R,y_{-\lceil \frac{s+2}{4} \rceil})$ is well defined, with $d_1=\frac{s+2}{2}$. Its vertex set is contained in
$$\textstyle \{ y_i: \lfloor \frac{s+2}{4} \rfloor \le i \le \frac{t}{2}-\lceil \frac{s+2}{4} \rceil\} \cup
\{ y_i: \frac{t}{2}+\lfloor \frac{s+2}{4} \rfloor \le i \le t-\lceil \frac{s+2}{4} \rceil\},$$
and its terminus is $y_{\frac{t}{2}-\lceil \frac{s+2}{4} \rceil}$. Let $P=P(R,y_{-\lceil \frac{s+2}{4} \rceil})\, y_{\frac{t}{2}-\lceil \frac{s+2}{4} \rceil}\, y_{\frac{t}{2}-\lceil \frac{s+2}{4} \rceil+1}$.

Since $a \le \frac{s}{3}$, we have $\frac{a}{2} < \lfloor \frac{s+2}{4} \rfloor$. It follows that $P$ and $A$ are disjoint, and that $D=A \cup P$
is an $S$-orthogonal $(\vec{P}_1^{\langle a \rangle},\vec{P}_{t-s-a})$-subdigraph of $\dci(t;S)$. See Figure~\ref{fig:Lee1}.

If $a=0$, then $T=\emptyset$ and $D=P$ is the required $S$-orthogonal $(\vec{P}_{t-s})$-subdigraph of $\dci(t;S)$.

\smallskip

\begin{figure}[t]
\centerline{\includegraphics[scale=0.6]{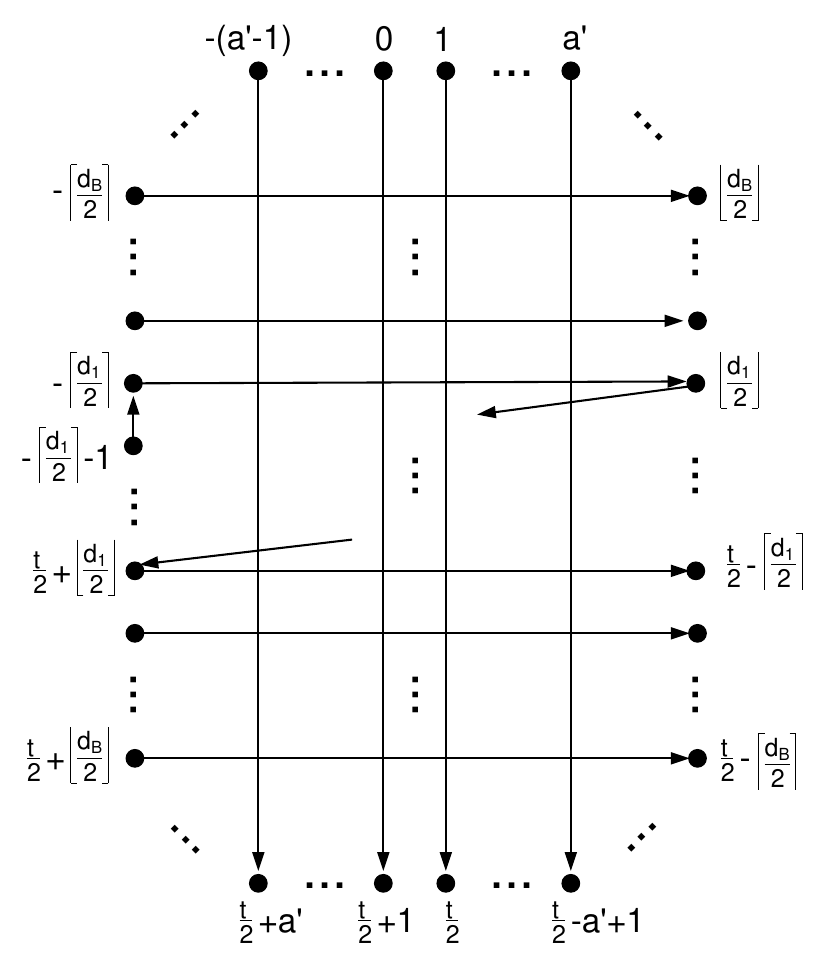}}
\caption{Lemma~\ref{lem:long2}, Subcase 1.2, for $|R|>1$. (Only the subscripts of the vertices are specified.)}
\label{fig:Lee2}
\end{figure}

{\sc Subcase 1.2}: $\frac{t-s}{2} \le a < t-s$. Let $a'=\lfloor \frac{t-s}{4} \rfloor$ and $d_A=\frac{t}{2}-(2a'-1)$. Observe that $d_A=\frac{s+2}{2}$ if $t \equiv s \pmod{4}$, and $d_A=\frac{s+4}{2}$ otherwise. Let
$$\textstyle T'=\{ \pm (\frac{t}{2}-1), \pm (\frac{t}{2}-3),\ldots, \pm  d_A \},$$
 so that $|T'|=2a'$. Let $A'=\bigcup_{i=-a'+1}^{a'} A_i$. Then $A'$ is a $T'$-orthogonal $(\vec{P}_1^{\langle 2a' \rangle})$-subdigraph of $\dci(t;T')$. Its vertex set is
$$\textstyle V(A')=\{ y_i: -a'+1 \le i \le a' \} \cup
\{ y_i: \frac{t}{2}-a'+1 \le i \le \frac{t}{2}+a' \}.$$

Let $b=a-2a'$, and note that $b$ is even. Let $d_B=\frac{s+4}{2}$ if $t \equiv s \pmod{4}$, and $d_B=\frac{s+2}{2}$ otherwise, so that $\{ d_A,d_B \} = \{ \frac{s+2}{2}, \frac{s+4}{2} \}$ in both cases.
Let
$$R'=\{ \pm d_B, \pm (d_B+2), \ldots, \pm (d_B+b-2) \},$$
 so that $R' \cap T' =\emptyset$. Since $a<t-s$, we know that $d_B+b-2 < \frac{t}{2}$ and $|R'|=b$.

For each $d \in R'$ such that $d>0$, define the  directed 1-paths
$$\textstyle B_d=y_{-\lceil \frac{d}{2} \rceil} \, y_{\lfloor \frac{d}{2} \rfloor} \qquad \mbox{ and }\qquad
B_{-d}=y_{\frac{t}{2}+\lfloor \frac{d}{2} \rfloor} \,  y_{\frac{t}{2}-\lceil \frac{d}{2} \rceil}.$$
Observe that the arcs in $B_d$ and $B_{-d}$ are of difference $d$ and $-d$, respectively. Thus, the directed 1-paths in $\{ B_i: i \in R' \}$  jointly contain exactly one arc of each difference in $R'$. Moreover, the digraphs $B_i$ are pairwise disjoint, so that
$B= \bigcup_{i \in R'} B_i$ is an $R'$-orthogonal $(\vec{P}_1^{\langle b \rangle})$-subdigraph of $\dci(t; R')$.

Let $R=S-T'-R'-\{ 1 \}$, and observe that $\frac{t}{2} \in R$.  Assume first that $|R|>1$. Then the directed path $P(R,y_{-\lceil \frac{d_1}{2} \rceil})$ is well defined, and
with the notation ($\star)$, we have $d_1=d_B+b$.  If $\ell \ge 2$, then $d_2 =d_1+2$, and if $\ell=1$, then $\frac{t}{2}=d_1+2$. In either case, vertex $y_{-\lceil \frac{d_1}{2} \rceil-1}$ is not on $P(R,y_{-\lceil \frac{d_1}{2} \rceil})$, and $P=y_{-\lceil \frac{d_1}{2} \rceil-1} \, y_{-\lceil \frac{d_1}{2} \rceil} \, P(R,y_{-\lceil \frac{d_1}{2} \rceil})$ is a directed path. Moreover, $P$ is disjoint from $B$, and $V(B \cup P)$ is contained in
$$\textstyle \{ y_i: \lfloor \frac{d_B}{2} \rfloor \le i \le \frac{t}{2} - \lceil \frac{d_B}{2} \rceil \} \cup
\{ y_i: \frac{t}{2}+ \lfloor \frac{d_B}{2} \rfloor \le i \le t-\lceil \frac{d_B}{2} \rceil\}.$$

Since $\frac{t-s}{2} \le a \le \frac{s}{3}$, it is not difficult to show that
$a' < \lfloor \frac{d_B}{2} \rfloor$.
It follows that $B \cup P$ and $A'$ are disjoint, and that $D=A' \cup B \cup P  $
is an $S$-orthogonal $(\vec{P}_1^{\langle a \rangle},\vec{P}_{t-s-a})$-subdigraph of $\dci(t;S)$. See Figure~\ref{fig:Lee2}.

If, however,  $|R|=1$, then $P=y_{-\lceil \frac{t}{4} \rceil} \, y_{\lfloor \frac{t}{4} \rfloor}$ is a directed 1-path, and $t-s-a=2$. Let $A''= \bigcup_{i=-a'+2}^{a'} A_i$ and $P'=y_{-a'} \, y_{-a'+1} A_{-a'+1}$. As $a' < \lceil \frac{d_B}{2} \rceil$, we have that $P'$ is a directed 2-path disjoint from $A'' \cup B \cup P $. It follows that
$D=A'' \cup B \cup P \cup P'$ is an $S$-orthogonal $(\vec{P}_1^{\langle a \rangle},\vec{P}_{t-s-a})$-subdigraph of $\dci(t;S)$.

\smallskip

{\sc Subcase 1.3}: $a = t-s$. This case is very similar to Subcase 1.2, except that $|R'|=b-1$ and $R=\emptyset$. We define the unions $A'$ and $B$ of directed 1-paths exactly as in Subcase 1.2, while the last directed 1-path will be $P=y_{-a'-1} \, y_{-a'}$. As $t-s=a\le\frac{s}{3}<s$, it can be shown that $a'+1 < \lceil \frac{d_B}{2} \rceil$. It follows that $P$ is disjoint from $A' \cup B$, and  $D=A' \cup B \cup P $ is an $S$-orthogonal $(\vec{P}_1^{\langle a \rangle})$-subdigraph of $\dci(t;S)$. Hence $\dci(t;S)$ admits an $S$-orthogonal $(\vec{P}_1^{\langle a \rangle},\vec{P}_0)$-subdigraph as well.

\bigskip

{\sc Case 2}: $t$ is even and $s$ is odd. Then $S=\{ \pm \frac{s+1}{2}, \pm \frac{s+3}{2}, \ldots, \pm (\frac{t}{2}-1), \frac{t}{2}\}.$ By the assumption, we have $s \ne 3$.

\begin{figure}[t]
\centerline{\includegraphics[scale=0.6]{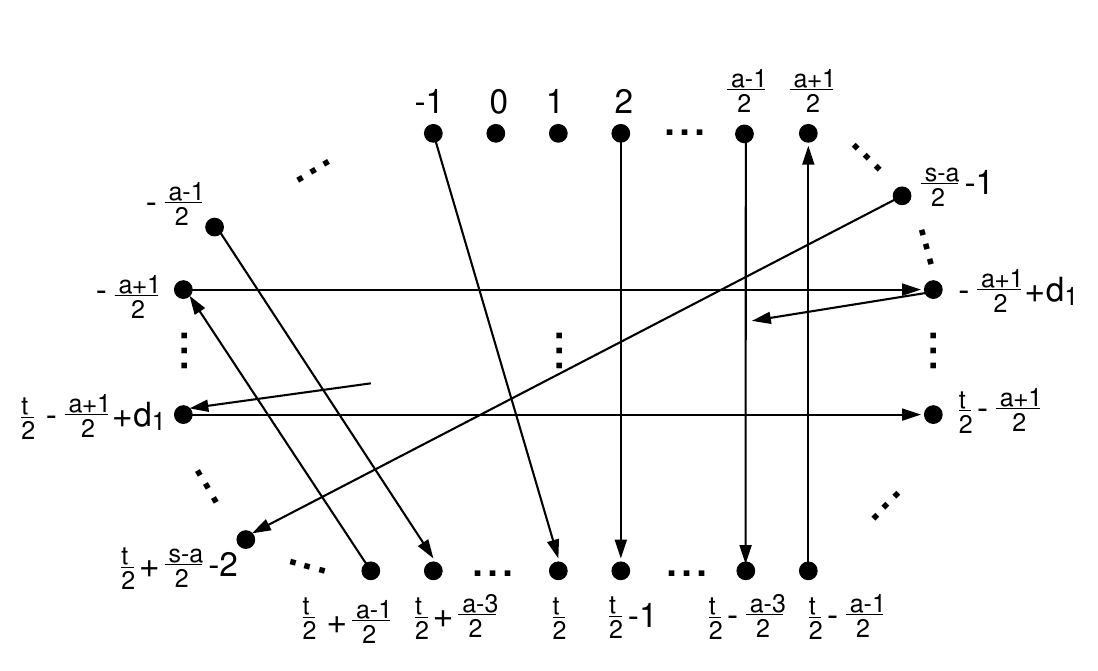}}
\caption{Lemma~\ref{lem:long2}, Subcase 2.1. (Only the subscripts of the vertices are specified.)}
\label{fig:Leo1}
\end{figure}

{\sc Subcase 2.1}: $a \le \frac{t-s-1}{2}$ and $(s,a) \not\in \{ (9,3),(5,1) \}$.
Define directed 1-paths
$$\textstyle A_{i}=y_{i}\, y_{\frac{t}{2}-i-1} \quad \mbox{ for }
i=-1,-2,\ldots,-\frac{a-1}{2},$$
and
$$\textstyle A_{i}=y_{i}\, y_{\frac{t}{2}-i+1} \quad \mbox{ for }
i=2,3,\ldots,\frac{a-1}{2}.$$
In addition, let
$$ A_{1}= y_{\frac{s-a}{2}-1} \, y_{\frac{t}{2}+\frac{s-a}{2}-2}
\qquad \mbox{ and } \qquad A_{-\frac{a+1}{2}}= y_{\frac{t}{2}-\frac{a-1}{2}}\, y_{\frac{a+1}{2}}.$$
Observe that the arc in $A_i$ is of difference $\frac{t}{2}-(2i-1)$ if $i>0$, and of difference $-(\frac{t}{2}+(2i+1))$ if $i < 0$.
Thus the $A_{i}$, for $i \in \{ \pm 1, \ldots,\pm \frac{a-1}{2}, -\frac{a+1}{2} \}$,
jointly contain exactly one arc of each difference in
$$\textstyle T=\{ \pm (\frac{t}{2}-1), \pm (\frac{t}{2}-3),\ldots, \pm (\frac{t}{2}-(a-2)), -(\frac{t}{2}-a) \}.$$
 Moreover, since $a \le \frac{s}{3}$ and $(s,a) \not\in \{ (9,3),(5,1),(3,1) \}$, we find that
$\frac{a+1}{2} < \frac{s-a}{2}-1$. Consequently,
the $A_i$ are pairwise disjoint, and $A= \bigcup_{i=1}^{\frac{a-1}{2}} A_{-i} \cup \bigcup_{i=1}^{\frac{a+1}{2}} A_{i}$  is a $T$-orthogonal $(\vec{P}_1^{\langle a \rangle})$-subdigraph of $\dci(t; T)$. Its vertex set is contained in
$$\textstyle \{ y_i: -\frac{a-1}{2} \le i \le \frac{s-a}{2}-1 \} \cup
\{ y_i: \frac{t}{2}-\frac{a-1}{2} \le i \le \frac{t}{2}+\frac{a-3}{2} \} \cup \{ y_{\frac{t}{2}+\frac{s-a}{2}-2} \}.$$

Since $a \le \frac{t-s-1}{2}$, we know that
$T \subseteq S$.
Let $R=S-T-\{ \frac{t}{2}-a \}$. Then the directed path $P(R,y_{- \frac{a+1}{2}})$ is well defined, with $d_1 \ge \frac{s+1}{2}$. Its vertex set is contained in
$$\textstyle \{ y_i: d_1-\frac{a+1}{2}  \le i \le \frac{t}{2}- \frac{a+1}{2} \} \cup
\{ y_i: \frac{t}{2}+ d_1-\frac{a+1}{2}  \le i \le t- \frac{a+1}{2} \}.$$
Let $P=y_{\frac{t}{2}+ \frac{a-1}{2}} \, y_{- \frac{a+1}{2}} \, P(R,y_{- \frac{a+1}{2}})$.
Observe that $\frac{s-a}{2}= \frac{s+1}{2}-\frac{a+1}{2} \le d_1 -\frac{a+1}{2}$.
It follows that $P$ is an $(R \cup \{ \frac{t}{2}-a \})$-orthogonal directed path, and that $P$ and $A$ are disjoint. Hence $D=A \cup P$
is an $S$-orthogonal $(\vec{P}_1^{\langle a \rangle},\vec{P}_{t-s-a})$-subdigraph of $\dci(t;S)$. See Figure~\ref{fig:Leo1}.

\smallskip

{\sc Subcase 2.2}: $(s,a) \in \{ (9,3),(5,1) \}$.

If $s=9$ and $a=3$, then $S=\{ \pm 5, \pm 6, \ldots, \pm (\frac{t}{2}-1), \frac{t}{2}\}$. Construct directed paths $A_{-1}$, $A_{-2}$, and $P$ as in Subcase 2.1, but let
$ A_1=y_{\frac{t}{2}+2} \, y_1$.
The rest of the construction is completed as in Subcase 2.1.

If $s=5$ and $a=1$, then $S=\{ \pm 3, \pm 4, \ldots, \pm (\frac{t}{2}-1), \frac{t}{2}\}$. Define directed paths $A=y_0 \, y_{\frac{t}{2}+1}$ and $P=y_{\frac{t}{2}} \, y_{-1} \, P(R,y_{-1})$, where $R=S- \{ \pm(\frac{t}{2}-1) \}$, and complete the proof as in Subcase 2.1.

\smallskip

\begin{figure}[t]
\centerline{\includegraphics[scale=0.6]{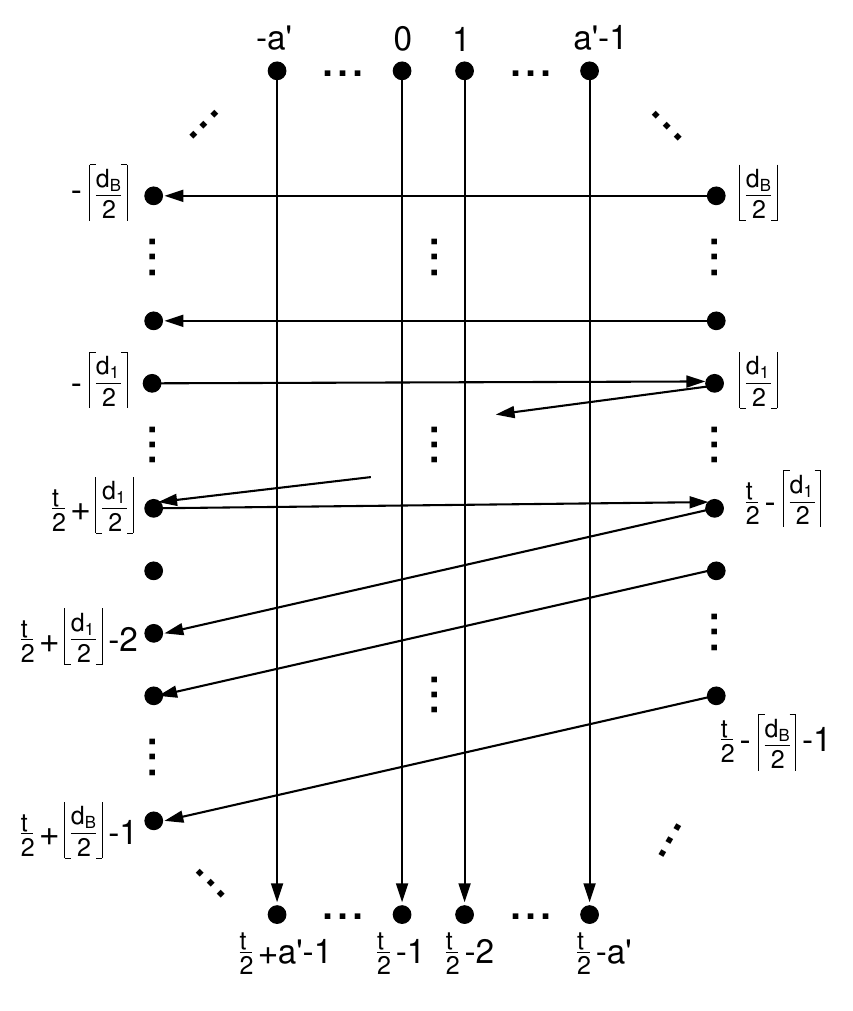}}
\caption{Lemma~\ref{lem:long2}, Subcase 2.3, with $|R|>1$. (Only the subscripts of the vertices are specified.)}
\label{fig:Leo2}
\end{figure}

{\sc Subcase 2.3}: $\frac{t-s+1}{2} \le a < t-s$. Let $a'=\lfloor \frac{t-s+1}{4} \rfloor$ and $d_A=\frac{t}{2}-(2a'-1)$. Observe that $d_A=\frac{s+1}{2}$ if $t \equiv s+3 \pmod{4}$, and $d_A=\frac{s+3}{2}$ otherwise. Let
$$\textstyle T'=\{ \pm (\frac{t}{2}-1), \pm (\frac{t}{2}-3),\ldots, \pm  d_A \},$$
 so that $|T'|=2a'$.

Define directed 1-paths
$$\textstyle A_{i}=y_{i}\, y_{\frac{t}{2}-i-1} \quad \mbox{ for }
i=-a',-a'+1,\ldots, 0,1,\ldots,a'-1.$$
Observe that the arc in $A_i$ is of difference $\frac{t}{2}-(2i+1)$ if $i \ge 0$, and of difference $-(\frac{t}{2}+(2i+1))$ if $i < 0$. Thus the $A_{i}$, for $i \in \{ 0, \pm 1, \ldots,\pm (a'-1),-a' \}$,
jointly contain exactly one arc of each difference in $T'$.
The $A_i$ are clearly pairwise disjoint, and $A= \bigcup_{i=-a'}^{a'-1} A_{i}$  is a $T'$-orthogonal $(\vec{P}_1^{\langle 2a' \rangle})$-subdigraph of $\dci(t; T')$. Its vertex set is
$$\textstyle V(A)=\{ y_i: -a' \le i \le a'-1 \} \cup
\{ y_i: \frac{t}{2}-a' \le i \le \frac{t}{2}+a'-1 \}.$$

Let $b=a-2a'$, and note that $b$ is odd, and $b \ge 0$ as $a \ge \frac{t-s+1}{2}$.
Let $d_B=\frac{s+1}{2}$ if $t \equiv s+1 \pmod{4}$, and $d_B=\frac{s+3}{2}$ otherwise, so that $\{ d_A,d_B \} = \{ \frac{s+1}{2}, \frac{s+3}{2} \}$ in both cases.
Let
$$R'=\{ \pm d_B, \pm (d_B+2), \ldots, \pm (d_B+b-1) \},$$
and note that $R' \cap T' =\emptyset$. As $a< t-s$, we can see that $d_B+b-1<\frac{t}{2}$. Hence $|R'|=b+1$.

For $d \in R'$ such that $d>0$, define  directed 1-paths
$$\textstyle B_{-d}=y_{\lfloor \frac{d}{2} \rfloor} \, y_{-\lceil \frac{d}{2} \rceil}   \qquad \mbox{ and }\qquad
B_{d}= y_{\frac{t}{2}-\lceil \frac{d+2}{2} \rceil} \, y_{\frac{t}{2}+\lfloor \frac{d-2}{2} \rfloor}.$$
Observe that the arcs in $B_{-d}$ and $B_{d}$ are of difference $-d$ and $d$, respectively. Thus, the directed 1-paths in $\{ B_i: i \in R' \}$  jointly contain exactly one arc of each difference in $R'$. Moreover, the digraphs $B_i$ are pairwise disjoint. Let $R''=R'-\{ d_B+b-1\}$. Then
$B= \bigcup_{i \in R''} B_i$ is an $R''$-orthogonal $(\vec{P}_1^{\langle b \rangle})$-subdigraph of $\dci(t; R'')$.

Let $R=S-T'-R'$, and observe that $\frac{t}{2} \in R$. If $|R|>1$, then with the notation $(\star)$, we have $d_1=d_B+b+1$, and
the directed path $P(R,y_{-\lceil \frac{d_1}{2} \rceil})$ is well defined.  Its last two vertices are $y_{\frac{t}{2}+\lfloor \frac{d_1}{2} \rfloor}$ and  $y_{\frac{t}{2}-\lceil \frac{d_1}{2} \rceil}$. Hence
$P(R,y_{-\lceil \frac{d_1}{2} \rceil})$ is disjoint from $B$, but shares the vertex $y_{\frac{t}{2}-\lceil \frac{d_1}{2} \rceil}=y_{\frac{t}{2}-\lceil \frac{d_B+b+1}{2} \rceil}$ with $B_{d_B+b-1}$. Hence
 $P=P(R,y_{-\lceil \frac{d_1}{2} \rceil})\, y_{\frac{t}{2}-\lceil \frac{d_B+b+1}{2} \rceil}\, y_{\frac{t}{2}+\lfloor \frac{d_B+b-3}{2} \rceil}$ is a directed path. Note that $V(B \cup P)$ is contained in
$$\textstyle \{ y_i: \lfloor \frac{d_B}{2} \rfloor \le i \le \frac{t}{2} - \lceil \frac{d_B+2}{2} \rceil \} \cup
\{ y_i: \frac{t}{2}+ \lfloor \frac{d_B-2}{2} \rfloor \le i \le t-\lceil \frac{d_B}{2} \rceil\}.$$

Since $\frac{t-s+1}{2} \le a \le \frac{s}{3}$, it is not difficult to show that $t-s+1 < s+3$, and hence
$a' < \lceil \frac{d_B}{2} \rceil$.
It follows that $B \cup P$ and $A$ are disjoint, and that $D=A \cup B \cup P$
is an $S$-orthogonal $(\vec{P}_1^{\langle a \rangle},\vec{P}_{t-s-a})$-subdigraph of $\dci(t;S)$. See Figure~\ref{fig:Leo2}.

If $|R|=1$, then $t-s-a=2$, $R=\{ \frac{t}{2} \}$, and $d_B+b-1=\frac{t}{2}-2$. Let $P= y_{\frac{t}{2}+\lfloor \frac{t}{4} \rfloor} \,
y_{\frac{t}{2}-\lceil \frac{t}{4} \rceil} \,
y_{\frac{t}{2}+\lfloor \frac{t}{4} \rfloor}-2$, so that $P$ is a directed 2-path disjoint from $A$ and $B$, and $D=A \cup B \cup P$
is an $S$-orthogonal $(\vec{P}_1^{\langle a \rangle},\vec{P}_{2})$-subdigraph of $\dci(t;S)$.

\smallskip

{\sc Subcase 2.4}: $a=t-s$. This case is similar to Subcase 2.3, so we only highlight the differences. Define $A$ exactly as in Subcase 2.3, but let
$$\textstyle R'=S-T=\{ \pm d_B, \pm (d_B+2),\ldots,\pm (\frac{t}{2}-2), \frac{t}{2} \}.$$
For $d \in R'$ such that $d>0$, define  directed 1-paths
$$\textstyle B_{-d}=y_{\lfloor \frac{d}{2} \rfloor} \, y_{-\lceil \frac{d}{2} \rceil}   \qquad \mbox{ and }\qquad
B_{d}= y_{\frac{t}{2}-\lceil \frac{d}{2} \rceil} \, y_{\frac{t}{2}+\lfloor \frac{d}{2} \rfloor},$$
so that the arcs in $B_{-d}$ and $B_{d}$ are of difference $-d$ and $d$, respectively.
Then $B= \bigcup_{i \in R'} B_i$ is an $R'$-orthogonal $(\vec{P}_1^{\langle a-2a' \rangle})$-subdigraph of $\dci(t; R')$.  Its vertex set is contained in
$$\textstyle \{ y_i: \lfloor \frac{d_B}{2} \rfloor \le i \le \frac{t}{2} - \lceil \frac{d_B}{2} \rceil \} \cup
\{ y_i: \frac{t}{2}+ \lfloor \frac{d_B}{2} \rfloor \le i \le t-\lceil \frac{d_B}{2} \rceil\},$$
and as before, it can be shown that $A$ and $B$ are disjoint. Hence $D=A \cup B$
is an $S$-orthogonal $(\vec{P}_1^{\langle a \rangle})$-subdigraph of $\dci(t;S)$.

%%%%%%%%%%

\bigskip

{\sc Case 3}: $t$ and $s$ are both odd. Then $S=\{ \pm \frac{s+1}{2}, \pm \frac{s+3}{2}, \ldots, \pm \frac{t-1}{2}\}.$
Define directed 1-paths
$$\textstyle  A_{i}=y_{i}\, y_{\frac{t-1}{2}-i} \qquad \mbox{ for } i=1,2,\ldots,\frac{a-1}{2},$$
and
$$\textstyle  A_{i}=y_{i}\, y_{\frac{t-1}{2}-i+1}\qquad \mbox{ for } i=-\frac{a-1}{2},-\frac{a-3}{2},\ldots,0.$$
Observe that the arc in $A_i$ is of difference $\frac{t-1}{2}-2i$ if $i>0$, and of difference $-(\frac{t-1}{2}+2i)$ if $i \le 0$.
Thus the $A_{i}$, for $i \in \{ 0, \pm 1, \ldots,\pm \frac{a-1}{2} \}$,
jointly contain exactly one arc of each difference in
$$\textstyle T=\{ - \frac{t-1}{2}, \pm (\frac{t-1}{2}-2),\ldots, \pm (\frac{t-1}{2}-(a-1)) \}.$$
Moreover,
the $A_i$ are pairwise disjoint, and $A= \bigcup_{i=-\frac{a-1}{2}}^{\frac{a-1}{2}} A_{i}$  is a $T$-orthogonal $(\vec{P}_1^{\langle a \rangle})$-subdigraph of $\dci(t; T)$. Its vertex set is contained in
$$\textstyle \{ y_i: -\frac{a-1}{2} \le i \le \frac{a-1}{2} \} \cup
\{ y_i: \frac{t-1}{2}-\frac{a-1}{2} \le i \le \frac{t-1}{2}+\frac{a+1}{2} \}.$$

\smallskip

\begin{figure}[t]
\centerline{\includegraphics[scale=0.6]{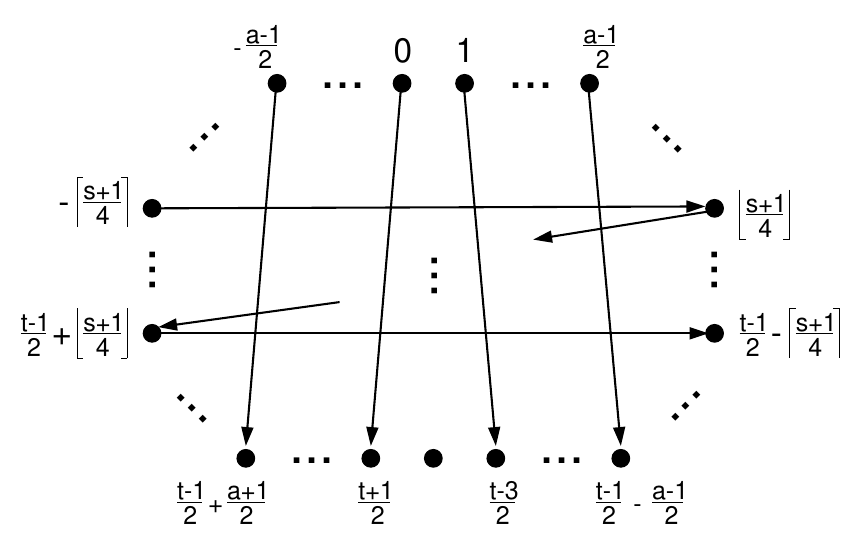}}
\caption{Lemma~\ref{lem:long2}, Subcase 3.1. (Only the subscripts of the vertices are specified.)}
\label{fig:Loo1}
\end{figure}

{\sc Subcase 3.1}: $a \le \frac{t-s-2}{2}$ and $(s,a) \not\in \{ (9,3),(5,1),(3,1) \}$. Note that $\frac{t-1}{2}-(a-1) \ge \frac{s+3}{2}$, so $\frac{s+1}{2} \not\in T$.
Let $R=S-T$. As $\frac{t-1}{2} \in R$, the directed path $P=P(R,y_{- \lceil \frac{s+1}{4} \rceil})$ is well defined, and with the notation $(\star)$, we have $d_1=\frac{s+1}{2}$. Its vertex set is contained in
$$\textstyle \{ y_i: \lfloor \frac{s+1}{4} \rfloor \le i \le \frac{t-1}{2}- \lceil \frac{s+1}{4} \rceil  \} \cup
\{ y_i: \frac{t-1}{2}+\lfloor \frac{s+1}{4} \rfloor \le i \le t- \lceil \frac{s+1}{4} \rceil  \}.$$

Since $a \le \frac{s}{3}$ and $(s,a) \not\in \{ (9,3),(5,1),(3,1) \}$, we have that $\frac{a+1}{2}< \lfloor \frac{s+1}{4} \rfloor$, implying that $P$ is disjoint from $A$. It follows that $D=A \cup P$
is an $S$-orthogonal $(\vec{P}_1^{\langle a \rangle},\vec{P}_{t-s-a})$-subdigraph of $\dci(t;S)$. See Figure~\ref{fig:Loo1}.

\smallskip

{\sc Subcase 3.2}: $(s,a) \in \{ (9,3),(5,1),(3,1) \}$. Use $P$ as in Subcase 3.1, but replace $A$ as follows.
If $(s,a)=(9,3)$, let $A$ be the union of $A_{-1}=y_{-2} \, y_{\frac{t+1}{2}}$,
$A_{0}=y_{-1} \, y_{\frac{t-1}{2}}$, and $A_{1}=y_{0} \, y_{\frac{t-5}{2}}$.
If $(s,a) \in \{ (3,1),(5,1) \}$, let $A=y_{\frac{t-1}{2}}\, y_{0}$.
Then complete the proof as in Subcase 3.1.

\smallskip

\begin{figure}[t]
\centerline{\includegraphics[scale=0.6]{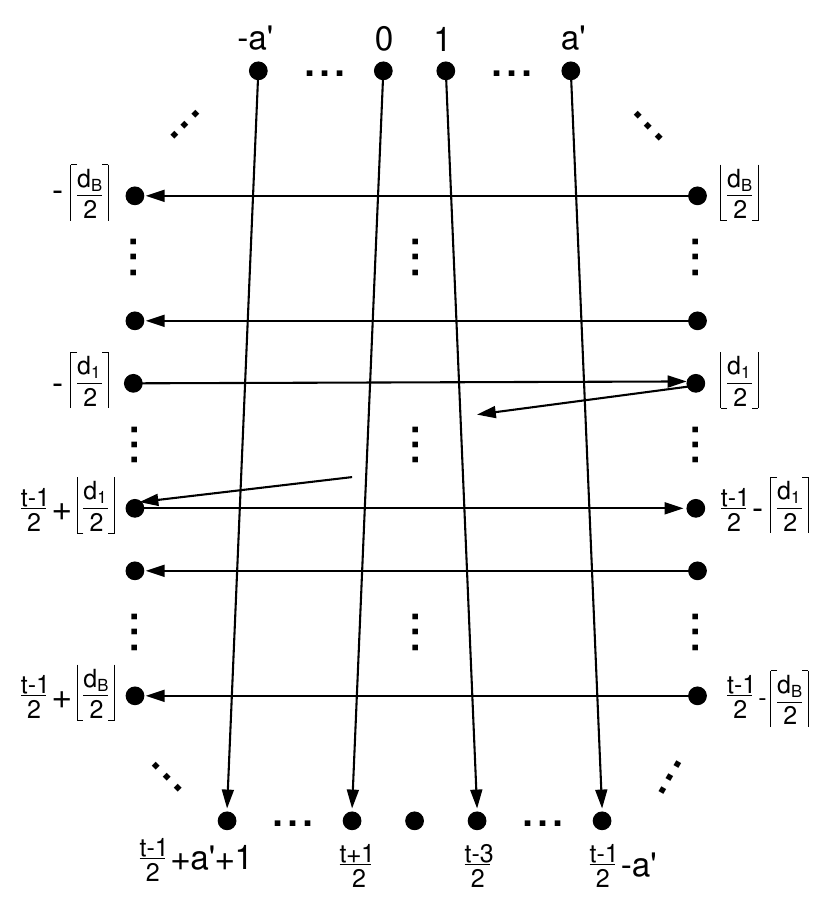}}
\caption{Lemma~\ref{lem:long2}, Subcase 3.3, and Subcase 4.3 with $B_0$ not shown. (Only the subscripts of the vertices are specified.)}
\label{fig:Loo2}
\end{figure}

{\sc Subcase 3.3}: $\frac{t-s}{2} \le a < t-s-1$. Let $a'=\lfloor \frac{t-s-2}{4} \rfloor$ and $d_A=\frac{t-1}{2}-2a'$. Observe that $d_A=\frac{s+1}{2}$ if $t \equiv s+2 \pmod{4}$, and $d_A=\frac{s+3}{2}$ otherwise. Let
$$\textstyle T'=\{ -\frac{t-1}{2}, \pm (\frac{t-1}{2}-2),\ldots, \pm  d_A \},$$ so that $|T'|=2a'+1$.

Let $A'=\bigcup_{i=-a'}^{a'} A_i$. Then $A'$ is a $T'$-orthogonal $(\vec{P}_1^{\langle 2a'+1 \rangle})$-subdigraph of $\dci(t;T')$. Its vertex set is contained in
$$\textstyle V(A')=\{ y_i: -a' \le i \le a' \} \cup
\{ y_i: \frac{t-1}{2}-a' \le i \le \frac{t-1}{2}+a'+1 \}.$$

Let $b=a-2a'-1$, and note that $b$ is even, $b \ge 0$. Let $d_B=\frac{s+3}{2}$ if $t \equiv s+2 \pmod{4}$, and $d_B=\frac{s+1}{2}$ otherwise, so that $\{ d_A,d_B \} = \{ \frac{s+1}{2}, \frac{s+3}{2} \}$ in both cases.
Let
$$R'=\{ \pm d_B, \pm (d_B+2), \ldots, \pm (d_B+b-2) \},$$ so that $|R'|=b$ and $R' \cap T' =\emptyset$. Since $a$ is odd, note that $a<t-s-2$, and hence $d_B+b < \frac{t-1}{2}$.

For $d \in R'$ such that $d>0$, define  directed 1-paths
$$\textstyle B_{-d}=y_{\lfloor \frac{d}{2} \rfloor} \, y_{-\lceil \frac{d}{2} \rceil}   \qquad \mbox{ and }\qquad
B_{d}= y_{\frac{t-1}{2}-\lceil \frac{d}{2} \rceil} \, y_{\frac{t-1}{2}+\lfloor \frac{d}{2} \rfloor}.$$
Observe that the arcs in $B_{-d}$ and $B_{d}$ are of difference $-d$ and $d$, respectively. Thus, the directed 1-paths in $\{ B_i: i \in R' \}$  jointly contain exactly one arc of each difference in $R'$. Moreover, the digraphs $B_i$ are pairwise disjoint. Thus
$B= \bigcup_{i \in R'} B_i$ is an $R'$-orthogonal $(\vec{P}_1^{\langle b \rangle})$-subdigraph of $\dci(t; R')$.

Let $R=S-T'-R'$, and as $\frac{t-1}{2} \in R$ and $|R|=t-s-a \ge 3$, observe that the directed path $P=P(R,y_{-\lceil \frac{d_1}{2} \rceil})$ is well defined, with $d_1=d_B+b$, assuming Notation $(\star)$. Moreover,
$P$ is disjoint from $B$, and $V(B \cup P)$ is contained in
$$\textstyle \{ y_i: \lfloor \frac{d_B}{2} \rfloor \le i \le \frac{t-1}{2} - \lceil \frac{d_B}{2} \rceil \} \cup
\{ y_i: \frac{t-1}{2}+ \lfloor \frac{d_B}{2} \rfloor \le i \le t-\lceil \frac{d_B}{2} \rceil\}.$$

Since $\frac{t-s}{2} \le a \le \frac{s}{3}$, we have $t-s \le \frac{2}{3}s<s-1$, and hence
$a'+1 < \lfloor \frac{d_B}{2} \rfloor$, unless $s=3$.
It follows that $B \cup P$ and $A$ are disjoint, and that $D=A \cup B \cup P$ is an $S$-orthogonal $(\vec{P}_1^{\langle a \rangle},\vec{P}_{t-s-a})$-subdigraph of $\dci(t;S)$. See Figure~\ref{fig:Loo2}.

If $s=3$, then $a=1$, $t=5$, and $D=y_0 \, y_{-2} \cup y_{-1} \, y_1$ is the desired $S$-orthogonal $(\vec{P}_1^{\langle 2 \rangle})$-subdigraph of $\dci(t;S)$.

\smallskip

\begin{figure}[t]
\centerline{\includegraphics[scale=0.6]{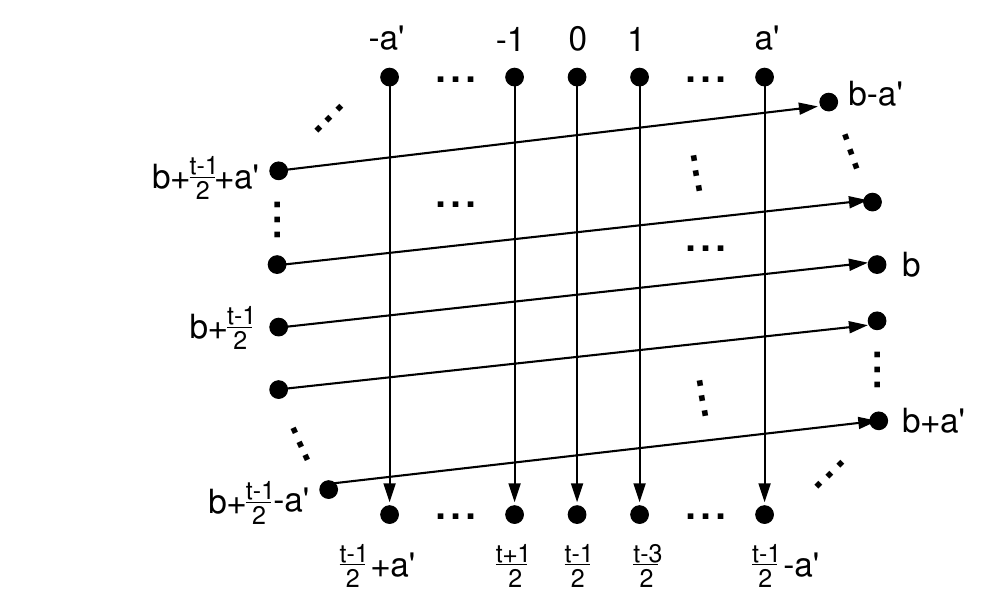}}
\caption{Lemma~\ref{lem:long2}, Subcase 3.4, and Subcase 4.4 with $B_0$ not shown. (Only the subscripts of the vertices are specified.)}
\label{fig:Loo3}
\end{figure}

{\sc Subcase 3.4}: $a=t-s-1$. In this case, we need to construct an $S$-orthogonal $(\vec{P}_1^{\langle a+1 \rangle})$-subdigraph of $\dci(t;S)$. Let $a'=\lceil \frac{t-s-2}{4} \rceil$ and $b=2a'+1$.

For  $i = -a',-a'+1, \ldots, a'$, define directed 1-paths
$$\textstyle B_i=y_i \, y_{\frac{t-1}{2}-i} \qquad \mbox{ and } \qquad
C_i=y_{b+\frac{t-1}{2}-i} \, y_{b+i},$$
and let
$$B=\bigcup_{i=-a'}^{a'} B_i \qquad \mbox{ and } \qquad C=\bigcup_{i=-a'}^{a'} C_i.$$
See Figure~\ref{fig:Loo3}. Observe that the arc in $B_i$ is of difference $\frac{t-1}{2}-2i$ if $i \ge 0$, and of difference $-(\frac{t-1}{2}+2i+1)$ if $i < 0$. Thus the $B_{i}$, for $i \in \{ 0, \pm 1, \ldots,\pm a' \}$,
jointly contain exactly one arc of each difference in
$$\textstyle T_B=\{ \frac{t-1}{2}, -(\frac{t-1}{2}-1),\frac{t-1}{2}-2, \ldots, -(\frac{t-1}{2}-2a'+1), \frac{t-1}{2}-2a' \}.$$
Moreover, the $B_i$ are pairwise disjoint.

Similarly, the arc in $C_i$ is of difference $-(\frac{t-1}{2}-2i)$ if $i \ge 0$, and of difference $\frac{t-1}{2}+2i+1$ if $i < 0$. Thus the $C_{i}$, for $i \in \{ 0, \pm 1, \ldots,\pm a' \}$,
jointly contain exactly one arc of each difference in
$$\textstyle T_C=\{ -\frac{t-1}{2}, \frac{t-1}{2}-1,-(\frac{t-1}{2}-2), \ldots, \frac{t-1}{2}-2a'+1, -(\frac{t-1}{2}-2a') \},$$
and they are also pairwise disjoint.

Furthermore, observe that
$$\textstyle V(B)=\{ y_i: -a' \le i \le a' \} \cup
\{ y_i: \frac{t-1}{2}-a' \le i \le \frac{t-1}{2}+a' \}$$
and
$$\textstyle V(C)=\{ y_i: b-a' \le i \le b+a' \} \cup
\{ y_i: b+\frac{t-1}{2}-a' \le i \le b+\frac{t-1}{2}+a' \}.$$
We claim that $B$ and $C$ are disjoint. Since $b=2a'+1$,  it suffices to show that $\frac{t-1}{2}-a'>b+a'$, that is, that $4a'< \frac{t-3}{2}$. As $a' \le \frac{t-s}{4}$, it further suffices to show $t-s < \frac{t-3}{2}$, which is equivalent to $t-s-1 < s-4$. Since $a=t-s-1 \le \frac{s}{3}$ by assumption, the desired inequality holds for all $s \ge 7$.

Let
$$D= \bigcup_{i=-a'}^{a'} (B_i \cup C_i)\qquad \mbox{ and } \qquad D= \bigcup_{i=-a'}^{a'-1} (B_i \cup C_i)$$
when $t-s \equiv 2 \pmod{4}$ and $t-s \equiv 0 \pmod{4}$, respectively. Note that in either case, $D$ contains exactly one arc of each difference in $S$. Thus, if $s \ge 7$, then $D$ is an $S$-orthogonal $(\vec{P}_1^{\langle a+1 \rangle})$-subdigraph of $\dci(t;S)$ by the above observations. If $s \in \{ 3,5 \}$, however, this statement can easily be verified directly.

\bigskip

{\sc Case 4}: $t$ is odd and $s$ is even. Then $S=\{ 1, \pm \frac{s+2}{2}, \pm \frac{s+4}{2}, \ldots, \pm \frac{t-1}{2}\}.$ In particular, $S=\{ 1 \}$ if $t-s=1$. In this case, the required $S$-orthogonal $(\vec{P}_1)$-subdigraph of $\dci(t;S)$ is easy to find. Hence we may assume $t-s \ge 3$ and $\pm \frac{t-1}{2} \in S$.

If $a \ge 2$, define directed 1-paths
$$\textstyle A_{i}=y_{i}\, y_{\frac{t-1}{2}-i} \qquad \mbox{ for } i=1,2,\ldots,\frac{a-2}{2},$$
and
$$\textstyle A_{i}=y_{i}\, y_{\frac{t-1}{2}-i+1} \qquad \mbox{ for } i=-\frac{a-2}{2},-\frac{a-4}{2}, \ldots,0.$$
Observe that the arc in $A_i$ is of difference $\frac{t-1}{2}-2i$ if $i>0$, and of difference $-(\frac{t-1}{2}+2i)$ if $i \le 0$.
Thus the $A_{i}$, for $i \in \{ 0, \pm 1, \ldots,\pm \frac{a-2}{2} \}$,
jointly contain exactly one arc of each difference in
$$\textstyle T=\{ - \frac{t-1}{2}, \pm (\frac{t-1}{2}-2),\ldots, \pm (\frac{t-1}{2}-(a-2)) \}.$$
 Moreover,
the $A_i$ are pairwise disjoint, and $A= \bigcup_{i=-\frac{a-2}{2}}^{\frac{a-2}{2}} A_{i}$  is a $T$-orthogonal $(\vec{P}_1^{\langle a-1 \rangle})$-subdigraph of $\dci(t; T)$. Its vertex set is contained in
$$\textstyle \{ y_i: -\frac{a-2}{2} \le i \le \frac{a-2}{2} \} \cup
\{ y_i: \frac{t-1}{2}-\frac{a-2}{2} \le i \le \frac{t-1}{2}+\frac{a}{2} \}.$$

\smallskip

\begin{figure}[t]
\centerline{\includegraphics[scale=0.6]{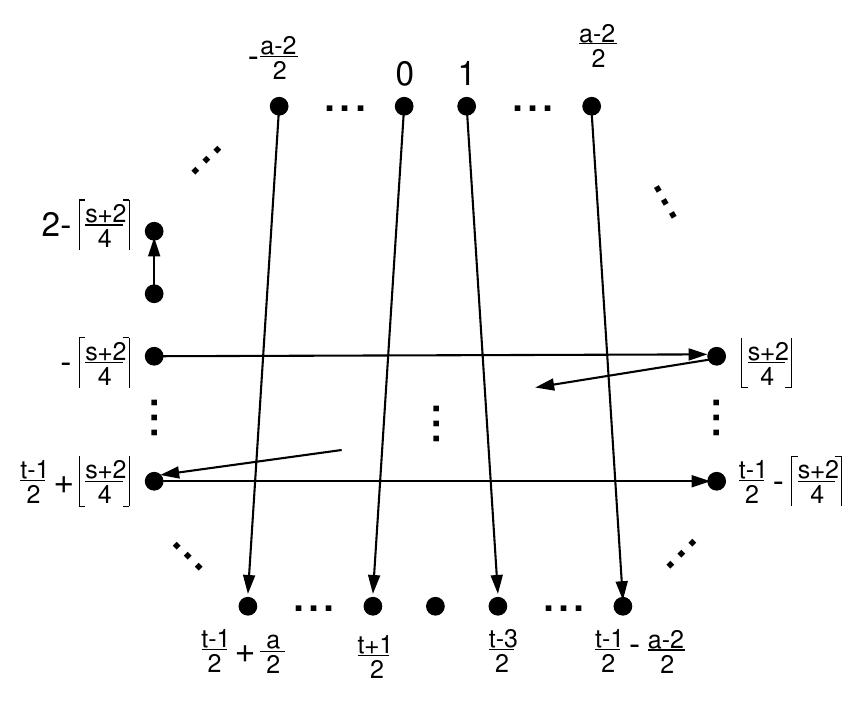}}
\caption{Lemma~\ref{lem:long2}, Subcase 4.1. (Only the subscripts of the vertices are specified.)}
\label{fig:Loe1}
\end{figure}

{\sc Subcase 4.1}: $2 \le a \le \frac{t-s-1}{2}$. Note that $\frac{t-1}{2}-(a-2) \ge \frac{s+4}{2}$, so $\frac{s+2}{2} \not\in T$.
Let $R=S-T-\{ 1 \}$. As $\frac{t-1}{2} \in R$, the directed path $P=P(R,y_{- \lceil \frac{s+2}{4} \rceil})$ is well defined, and with Notation $(\star)$, we have $d_1=\frac{s+2}{2}$. Note that $V(P)$ is contained in
$$\textstyle \{ y_i: \lfloor \frac{s+2}{4} \rfloor \le i \le \frac{t-1}{2}- \lceil \frac{s+2}{4} \rceil  \} \cup
\{ y_i: \frac{t-1}{2}+\lfloor \frac{s+2}{4} \rfloor \le i \le t- \lceil \frac{s+2}{4} \rceil  \}.$$
Additionally, define the directed 1-path $B_0=y_{1-\lceil \frac{s+2}{4} \rceil} \, y_{2-\lceil \frac{s+2}{4} \rceil}$, and observe that $B_0$ is disjoint from $P$.

If $(s,a) \ne (6,2)$, since $2 \le a \le \frac{s}{3}$, we have that $\frac{a-2}{2}-1< \lceil \frac{s+2}{4} \rceil -2$, implying that $B_0 \cup P$ is disjoint from $A$. It follows that $D=A \cup B_0 \cup P$
is an $S$-orthogonal $(\vec{P}_1^{\langle a \rangle},\vec{P}_{t-s-a})$-subdigraph of $\dci(t;S)$. See Figure~\ref{fig:Loe1}.

If $(s,a) = (6,2)$, replace $A$ with the directed 1-path $A'=y_{\frac{t+1}{2}} \, y_1$, and complete as above.

\smallskip

{\sc Subcase 4.2}: $a=0$. Let $R=S-\{ 1, -\frac{t-1}{2} \}$. Then $P=P(R,y_{- \lceil \frac{s+2}{4} \rceil})$ is well defined, with $d_1=\frac{s+2}{2}$, and $P \, y_{\frac{t-1}{2}-\lceil \frac{s+2}{4} \rceil} \, y_{\frac{t-1}{2}-\lceil \frac{s+2}{4} \rceil+1} \, y_{1- \lceil \frac{s+2}{4} \rceil}$ is the required $S$-orthogonal directed $(t-s)$-path in $\dci(t;S)$

\smallskip

{\sc Subcase 4.3}: $\frac{t-s+1}{2} \le a < t-s-1$. Let $a'=\lfloor \frac{t-s-3}{4} \rfloor$ and $d_A=\frac{t-1}{2}-2a'$. Observe that $d_A=\frac{s+2}{2}$ if $t \equiv s+3 \pmod{4}$, and $d_A=\frac{s+4}{2}$ otherwise. Let
$$\textstyle T'=\{ -\frac{t-1}{2}, \pm (\frac{t-1}{2}-2),\ldots, \pm  d_A \},$$
so that $|T'|=2a'+1$.
Let $A'=\bigcup_{i=-a'}^{a'} A_i$. Then $A'$ is a $T'$-orthogonal $(\vec{P}_1^{\langle 2a'+1 \rangle})$-subdigraph of $\dci(t;T')$. Its vertex set is contained in
$$\textstyle V(A')=\{ y_i: -a' \le i \le a' \} \cup
\{ y_i: \frac{t-1}{2}-a' \le i \le \frac{t-1}{2}+a'+1 \}.$$

Let $b=a-2a'-2$, and note that $b$ is even, $b \ge 0$. Let $d_B=\frac{s+4}{2}$ if $t \equiv s+3 \pmod{4}$, and $d_B=\frac{s+2}{2}$ otherwise, so that $\{ d_A,d_B \} = \{ \frac{s+2}{2}, \frac{s+4}{2} \}$ in both cases.
Let
$$\textstyle R'=\{ \pm d_B, \pm (d_B+2), \ldots, \pm (d_B+b-2) \},$$ so that $|R'|=b$ and $R' \cap T' =\emptyset$. Note that since $a<t-s-2$, we have that $d_B+b < \frac{t-1}{2}$.

For $d \in R'$ such that $d>0$, define  directed 1-paths
$$\textstyle B_{-d}=y_{\lfloor \frac{d}{2} \rfloor} \, y_{-\lceil \frac{d}{2} \rceil}   \qquad \mbox{ and }\qquad
B_{d}= y_{\frac{t-1}{2}-\lceil \frac{d}{2} \rceil} \, y_{\frac{t-1}{2}+\lfloor \frac{d}{2} \rfloor}.$$
Observe that the arcs in $B_{-d}$ and $B_{d}$ are of difference $-d$ and $d$, respectively. Thus, the directed 1-paths in $\{ B_i: i \in R' \}$  jointly contain exactly one arc of each difference in $R'$. Moreover, the digraphs $B_i$ are pairwise disjoint. Thus
$B= \bigcup_{i \in R'} B_i$ is an $R'$-orthogonal $(\vec{P}_1^{\langle b \rangle})$-subdigraph of $\dci(t; R')$.

Let $R=S-T'-R'-\{ 1 \}$, and as $\frac{t-1}{2} \in R$ and $|R|=t-s-a \ge 3$, observe that the directed path $P=P(R,y_{-\lceil \frac{d_1}{2} \rceil})$ is well defined, with $d_1=d_B+b$ using Notation $(\star)$. Moreover,
$P$ is disjoint from $B$, and $V(B \cup P)$ is contained in
$$\textstyle \{ y_i: \lfloor \frac{d_B}{2} \rfloor \le i \le \frac{t-1}{2} - \lceil \frac{d_B}{2} \rceil \} \cup
\{ y_i: \frac{t-1}{2}+ \lfloor \frac{d_B}{2} \rfloor \le i \le t-\lceil \frac{d_B}{2} \rceil\}.$$
Additionally, define the directed 1-path $B_0=y_{1-\lceil \frac{d_B}{2} \rceil} \, y_{2-\lceil \frac{d_B}{2} \rceil}$, and observe that $B_0$ is disjoint from $B \cup P$.

Since $\frac{t-s+1}{2} \le a \le \frac{s}{3}$, we have $t-s+1\le \frac{2}{3}s<s$, and hence
$a'+2 < \lceil \frac{d_B}{2} \rceil$.
It follows that $B \cup B_0 \cup P$ and $A$ are disjoint, and that $D=A \cup B \cup B_0 \cup P$ is an $S$-orthogonal $(\vec{P}_1^{\langle a \rangle},\vec{P}_{t-s-a})$-subdigraph of $\dci(t;S)$. Figure~\ref{fig:Loo2} shows $A \cup B \cup P$ only.

{\sc Subcase 4.4}: $a=t-s-1$. This subcase is very similar to Subcase 3.4, so we only highlight the differences. We are constructing an $S$-orthogonal $(\vec{P}_1^{\langle a+1 \rangle})$-subdigraph of $\dci(t;S)$. Let $a'=\lceil \frac{t-s-3}{4} \rceil$ and $b=2a'+1$.

For  $i = -a',-a'+1, \ldots, a'$, define directed 1-paths $B_i$ and $C_i$, and their unions $B$ and $C$, respectively, exactly as in Subcase 3.4 (see Figure~\ref{fig:Loo3}). In addition, let $D_0$ be the directed 1-path $D_0= y_{-a'-2} \, y_{-a'-1}$.

Let
$$D= D_0 \cup \bigcup_{i=-a'}^{a'} (B_i \cup C_i)\qquad \mbox{ and } \qquad D= D_0 \cup \bigcup_{i=-a'}^{a'-1} (B_i \cup C_i)$$
when $t-s \equiv 3 \pmod{4}$ and $t-s \equiv 1 \pmod{4}$, respectively. Note that in both cases, $D$ contains exactly one arc of each difference in $S$. To see that $D$ is a disjoint union of directed 1-paths, it suffices to verify that $b+\frac{t-1}{2}+a'< t-a'-2$, that is, that $4a'< \frac{t-5}{2}$. As $a' \le \frac{t-s-1}{4}$, it further suffices to show that $t-s-1 < \frac{t-5}{2}$, which holds for $s \ge 7$ as seen in Subcase 3.4. For $s <7$, we have either $a=0$, or else $s=6$, $a=2$, and $t=9$. In each of these cases, the above construction can easily be verified directly.

\bigskip
This completes all cases.
\end{proof}

% special cases

In the next lemma, we provide constructions for the special cases when either Lemma~\ref{lem:long1} or Lemma~\ref{lem:long2} does not apply.

\begin{lemma}\label{lem:long-special}
Let $t$ be an even integer.
\begin{enumerate}[(a)]
\item If $t\ge 4$, then OP$^\ast(3,t)$ has a solution.
\item If $t\ge 8$, then OP$^\ast(2,2,t)$ has a solution.
\end{enumerate}
\end{lemma}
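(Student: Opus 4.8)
\medskip
\noindent\emph{Outline of a proof.}

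\smallskip
Part (a) should be immediate from earlier results. Since $t\ge 4$ is even, $n=3+t$ is odd and $(3,t)\notin\{(3,3),(4,5)\}$, so OP$(3,t)$ has a solution by Theorem~\ref{thm:Tra}; taking two copies of each $2$-factor, with each cycle oriented in each of the two possible ways, converts this into a solution of OP$^\ast(3,t)$. This is exactly what Corollary~\ref{cor:OP} asserts for $k=2$, so (a) follows by quoting that corollary.

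For part (b) the plan is to apply Corollary~\ref{cor:main} with $(m_1,m_2,m_3)=(2,2,t)$, $\ell=2$ and $k=3$, so that $s=m_1+m_2=4<t$. Condition (1) holds because OP$^\ast(2,2)=$ OP$^\ast(4;2)$ has a solution by Theorem~\ref{thm:Kn*}. I would take $s_1=s_2=1$, $s_3=2$, $t_1=t_2=0$ and $t_3=t-4$, which satisfies (2b), (2c) and, since $0<t-4<t$, forces $D_1\cong D_2\cong\vec P_0$ and $D_3\cong\vec P_{t-4}$. It then remains to choose a set $C=\ZZ_t^\ast\setminus S$ with $|C|=3$ such that (2a) $\dci(t;C)$ admits a $\vec C_t$-factorization, and (2d) $\dci(t;S)$ contains an $S$-orthogonal $(\vec P_0,\vec P_0,\vec P_{t-4})$-subdigraph --- equivalently, an $S$-orthogonal directed $(t-4)$-path in $\dci(t;S)$ together with two further, isolated, vertices. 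The cleanest way to secure (2a) is to let $C$ consist of three distinct units of $\ZZ_t$: the number of units of $\ZZ_t$ is $\phi(t)\ge 4$ for all even $t\ge 8$, and for such a $C$ each difference class of $\dci(t;C)$ is a single directed Hamilton cycle, so $\dci(t;C)$ is literally the disjoint union of three directed Hamilton cycles. All the real work then lies in (2d): for the chosen $C$ one must exhibit the $S$-orthogonal directed $(t-4)$-path. I would build it from the ``zig-zag'' directed Hamilton path $P^\ast=y_0\,y_1\,y_{-1}\,y_2\,y_{-2}\,\cdots\,y_{t/2}$ of $\dci(t;\ZZ_t^\ast)$, whose $i$-th arc has difference $(-1)^{i+1}i$ and which therefore realizes every nonzero difference exactly once; delete the three arcs whose differences lie in $C$ and re-route the rest into a single directed $(t-4)$-path. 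The choice of $C$ --- say $C=\{1,-1,3\}$ when $3\nmid t$, with an analogous substitute when $3\mid t$ --- should be made precisely so that this re-routing succeeds, with the first few values of $t$ handled by explicit ad hoc constructions.

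The main obstacle is the coupling between (2a) and (2d). The ``obvious'' set coming from Lemma~\ref{lem:long2} (there $S=\{1\}\cup\{\pm 3,\pm 4,\dots,\pm\tfrac t2\}$, i.e.\ $C=\{-1,2,-2\}$) makes (2d) trivial but makes (2a) \emph{false}: every directed Hamilton cycle of $\dci(t;\{-1,2,-2\})$ must use at least four arcs of difference $-1$ (a Hamilton cycle with only two such arcs would force one of $\pm2\pm2-2$ to be $\equiv 0\pmod t$, impossible for even $t\ne 6$), so three arc-disjoint such cycles would need at least $12$ arcs of difference $-1$, whereas only $t$ are available --- a contradiction when $t\in\{8,10\}$. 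Conversely, forcing all three elements of $C$ to be units makes (2a) trivial but leaves the three deleted differences scattered along $P^\ast$, so that $P^\ast$ with those three arcs removed is not a single path and (2d) must be engineered rather than read off. Producing a single parametrized family $(C,S)$ that threads both needles at once --- or, more realistically, a short case split on the residue of $t$ modulo a couple of small numbers, plus a handful of sporadic small cases --- is where the bulk of the effort in part (b) will go.
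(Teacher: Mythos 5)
For part (a) your argument is correct and non-circular: with $t\ge 4$ even, $n=3+t$ is odd, $(3,t)\neq(4,5)$, and $k=2$, so Corollary~\ref{cor:OP} (i.e.\ Theorem~\ref{thm:Tra} plus the doubling trick) indeed gives OP$^\ast(3,t)$. This is a genuinely different route from the paper, which proves (a) constructively through Corollary~\ref{cor:main}: it exhibits $D\subset\ZZ_t^\ast$ with $|D|=2$ such that $\dci(t;D)$ has a $\vec{C}_t$-factorization and writes down an explicit $S$-orthogonal $(\vec{P}_1,\vec{P}_{t-4})$-subdigraph of $\dci(t;S)$ for $S=\ZZ_t^\ast-D$ (with $t=4,6$ done separately). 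Your version is shorter but imports Traetta's two-table theorem; the paper's is self-contained and in the same style as the rest of Section 6.

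For part (b), however, there is a genuine gap. Your framework is the right one (Corollary~\ref{cor:main} with $s_1=s_2=1$, $s_3=2$, $t_1=t_2=0$, $t_3=t-4$, so one needs $C=\ZZ_t^\ast\setminus S$ with $|C|=3$, a decomposition of $\dci(t;C)$ into three directed Hamilton cycles, and an $S$-orthogonal directed $(t-4)$-path in $\dci(t;S)$), and the idea of taking $C$ to consist of three units so that (2a) is trivial is exactly what the paper does. But you never produce the $S$-orthogonal $(t-4)$-path: you propose $C=\{1,-1,3\}$ (with an unspecified substitute when $3\mid t$), note that deleting the corresponding three arcs from the zig-zag Hamilton path leaves several fragments, and then explicitly defer the ``re-routing,'' the choice of $C$ that makes it work, and the small values of $t$ to future effort. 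That re-routing \emph{is} the content of the lemma; as written, nothing is proved for any $t$. For comparison, the paper takes $D=\{-1,\pm(\tfrac{t}{2}-1)\}$ when $t\equiv 0\pmod 4$ and $D=\{-1,\pm(\tfrac{t}{2}-2)\}$ when $t\equiv 2\pmod 4$ (all three differences coprime to $t$, so (2a) is immediate), and then writes out the required path explicitly --- a zig-zag $y_0\,y_2\,y_{-1}\,y_3\,y_{-2}\ldots$ with one irregular jump near $y_{t/4}$ --- and verifies its difference sequence covers $S$ exactly once; it is precisely this explicit construction and verification that your outline lacks. (Your aside that the na\"{\i}ve choice $C=\{-1,\pm 2\}$ inherited from Lemma~\ref{lem:long2} violates (2a) for $t\in\{8,10\}$ is essentially right, but it is tangential: it explains why a special lemma is needed, not how to prove it.)
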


\begin{proof} Since OP$^\ast(3)$ and OP$^\ast(2,2)$ have solutions by Theorem~\ref{thm:Kn*}, it suffices to show that Condition (2) of Corollary ~\ref{cor:main} is satisfied.

\begin{enumerate}[(a)]
\item Using Corollary ~\ref{cor:main}, we have $\ell=1$, $k=2$, $s=m_1=3$, and $m_2=t$. We take $s_1=t_1=1$, $s_2=2$, and $t_2=t-4$, and it then suffices to find $S \subset \ZZ_t^\ast$ such that $\dci(t;\ZZ_t^\ast-S)$ admits a $\vec{C}_t$-factorization and $\dci(t;S)$ admits an $S$-orthogonal $(\vec{P}_1,\vec{P}_{t-4})$-subdigraph.

First, let $t \ge 8$, and assume $t \equiv 0 \pmod{4}$. Let $D=\{ -1, \frac{t}{2}-1 \}$. Since $\gcd(t,\frac{t}{2}-1)=1$ and $\frac{t}{2}-1>1$, it is easy to see that $\dci(t;D)$ admits a $\vec{C}_t$-factorization with two $\vec{C}_t$-factors.

Let $S=\ZZ_t^\ast-D=\{ 1, \pm 2, \pm 3, \ldots, \pm (\frac{t}{2}-2), -(\frac{t}{2}-1), \frac{t}{2} \}$, and let the vertex set of $\dci(t;S)$ be $\{ y_i: i \in \ZZ_t \}$. Construct the following directed paths in $\dci(t;S)$:
$$ D_1= y_0 \, y_2 \, y_{-1} \, y_{3} \, y_{-2} \, \ldots \, y_{-(\frac{t}{4}-2)} \, y_{\frac{t}{4}} \, y_{-(\frac{t}{4}-1)} \, y_{\frac{t}{4}+3} \, y_{-\frac{t}{4}} \, y_{\frac{t}{4}+4} \, \ldots \, y_{-(\frac{t}{2}-3)} \, y_{\frac{t}{2}+1} \, y_1$$
and
$D_2=y_{\frac{t}{4}+1} \, y_{\frac{t}{4}+2}.$
Note that the differences of the arcs in $D_1$ are, in order,
$$\textstyle 2, -3, 4, -5, \ldots, \frac{t}{2}-2, -(\frac{t}{2}-1), -(\frac{t}{2}-2), \frac{t}{2}-3, \ldots, -2,\frac{t}{2},$$
while the arc of $D_2$ is of difference 1.
Hence $D_1 \cup D_2$ is an $S$-orthogonal $(\vec{P}_1,\vec{P}_{t-4})$-subdigraph of $\dci(t;S)$.

Assume now that $t \equiv 2 \pmod{4}$, and let $D=\{ \pm (\frac{t}{2}-2) \}$. Since $\gcd(t,\frac{t}{2}-2)=1$, we know that $\dci(t;D)$ admits a $\vec{C}_t$-factorization. Note that  $\frac{t}{2}-2>1$ by the assumption.

Let $S=\ZZ_t^\ast-D=\{ \pm 1, \pm 2,  \ldots, \pm (\frac{t}{2}-3), \pm(\frac{t}{2}-1), \frac{t}{2} \}$, and let the vertex set of $\dci(t;S)$ be $\{ y_i: i \in \ZZ_t \}$. Construct the following directed paths in $\dci(t;S)$:
$$ D_1= y_0 \, y_2 \, y_{-1} \, y_{3} \, y_{-2} \, \ldots \, y_{-\frac{t-10}{4}} \, y_{\frac{t-2}{4}} \, y_{-\frac{t-2}{4}} \,  y_{-\frac{t+2}{4}} \, y_{\frac{t+10}{4}} \, y_{-\frac{t+6}{4}} \, \ldots \, y_{-\frac{t-4}{2}} \, y_{\frac{t}{2}} \, y_{\frac{t}{2}+1} \, y_1$$
and
$D_2=y_{-\frac{t-6}{4}} \,  y_{\frac{t+2}{4}}.$
Note that the differences of the arcs in $D_1$ are, in order,
$$\textstyle 2,-3, 4, -5, \ldots, \frac{t}{2}-3, -(\frac{t}{2}-1), -1, -(\frac{t}{2}-3), \frac{t}{2}-4, \ldots, -2, 1, \frac{t}{2},$$
while $D_2$ contains an arc of difference $\frac{t}{2}-1$. Hence $D_1 \cup D_2$ is an $S$-orthogonal $(\vec{P}_1,\vec{P}_{t-4})$-subdigraph.

For $t=4$, let $D=\{ \pm 1 \}$ and $S=\{2\}$. For $t=6$, let $D=\{ \pm 1 \}$ and $S=\{ \pm 2, 3 \}$. In both cases, it is easy to verify that $\dci(t;D)$ admits a $\vec{C}_t$-factorization and that $\dci(t;S)$ admits an $S$-orthogonal $(\vec{P}_1,\vec{P}_{t-4})$-subdigraph.

In all cases, by Corollary~\ref{cor:main}, it follows that OP$^\ast(3,t)$ has a solution.

\item Using Corollary ~\ref{cor:main}, we now have $\ell=2$, $k=3$, $m_1=m_2=2$, $s=4$, and $m_3=t$. We take $s_1=s_2=1$, $t_1=t_2=0$, $s_3=2$, and $t_3=t-4$. It then suffices to find $S \subset \ZZ_t^\ast$ such that $\dci(t;\ZZ_t^\ast-S)$ admits a $\vec{C}_t$-factorization and $\dci(t;S)$ admits an $S$-orthogonal $(\vec{P}_{t-4})$-subdigraph.

Recall that $t \ge 8$.
If $t \equiv 0 \pmod{4}$, let $D=\{ -1, \pm (\frac{t}{2}-1) \}$. Since $\gcd(t,\frac{t}{2}-1)=1$ and $\frac{t}{2}-1>1$, it is easy to see that $\dci(t;D)$ admits a $\vec{C}_t$-factorization with three $\vec{C}_t$-factors.

Let $S=\ZZ_t^\ast-D=\{ 1, \pm 2, \pm 3, \ldots, \pm (\frac{t}{2}-2), \frac{t}{2} \}$, and let the vertex set of $\dci(t;S)$ be $\{ y_i: i \in \ZZ_t \}$. Construct the following directed path in $\dci(t;S)$:
$$ P= y_0 \, y_2 \, y_{-1} \, y_{3} \, y_{-2} \, \ldots \, y_{-(\frac{t}{4}-2)} \, y_{\frac{t}{4}} \, y_{-\frac{t}{4}} \, y_{\frac{t}{4}+2} \, y_{-(\frac{t}{4}+1)} \, y_{\frac{t}{4}+3} \, \ldots \, y_{-(\frac{t}{2}-2)} \, y_{\frac{t}{2}} \, y_{\frac{t}{2}+1}.$$
Note that the differences of the arcs in $P$ are, in order,
$$\textstyle 2, -3, 4, -5, \ldots, \frac{t}{2}-2, \frac{t}{2}, -(\frac{t}{2}-2), \frac{t}{2}-3, \ldots, -2,1,$$
so that $P$ is an $S$-orthogonal $(\vec{P}_{t-4})$-subdigraph.

If $t \equiv 2 \pmod{4}$, let $D=\{ -1, \pm (\frac{t}{2}-2) \}$. Since $\gcd(t,\frac{t}{2}-2)=1$ and $\frac{t}{2}-2>1$, we know that $\dci(t;D)$ admits a $\vec{C}_t$-factorization with three $\vec{C}_t$-factors.

Let $S=\ZZ_t^\ast-D=\{ 1, \pm 2,  \ldots, \pm (\frac{t}{2}-3), \pm(\frac{t}{2}-1), \frac{t}{2} \}$, and let the vertex set of $\dci(t;S)$ be $\{ y_i: i \in \ZZ_t \}$. Construct the following directed path in $\dci(t;S)$:
$$ P= y_0 \, y_2 \, y_{-1} \, y_{3} \, y_{-2} \, \ldots \, y_{-\frac{t-10}{4}} \, y_{\frac{t-2}{4}} \, y_{-\frac{t-2}{4}} \,
y_{\frac{t+2}{4}} \, y_{-\frac{t+2}{4}} \,y_{\frac{t+10}{4}} \, y_{-\frac{t+6}{4}} \, \ldots \, y_{-\frac{t-4}{2}} \, y_{\frac{t}{2}} \, y_{\frac{t}{2}+1}.$$
Note that the differences of the arcs in $P$ are, in order,
$$\textstyle 2,-3, 4, -5, \ldots, \frac{t}{2}-3, -(\frac{t}{2}-1), \frac{t}{2}, \frac{t}{2}-1, -(\frac{t}{2}-3), \frac{t}{2}-4, \ldots, -2, 1,$$
so that $P$ is an $S$-orthogonal $(\vec{P}_{t-4})$-subdigraph.

In both cases, by Corollary~\ref{cor:main}, it follows that OP$^\ast(2,2,t)$ has a solution.
\end{enumerate}
\end{proof}

%%%%%%%%%%%%%%%%%%%%

\section{Extending the 2-factor by 2-cycles: \\ technical lemmas}

Analogously to the previous section, we shall now perform the majority of the work required to prove Theorem~\ref{the:main-ext}(2); that is, we present the constructions that, with the help of Corollary~\ref{cor:main}, enable us  to  obtain a solution to OP$^\ast(m_1,\ldots,m_{\ell},2^{\langle b \rangle})$ from
a solution to OP$^\ast(m_1,\ldots,m_{\ell})$  when $b$ is sufficiently large.

\begin{lemma}\label{lem:2-cycles1}
Let $s$ and $t$  be integers such that  $t$ is even and $2 \le s < t$. Furthermore,
let $$\textstyle D=\{ \pm 1, \pm 2, \ldots, \pm \frac{s-1}{2} \}$$
if $s$ is odd, and
$$\textstyle D=\{ \pm 1, \pm 2, \ldots, \pm (\frac{s}{2}-1)\} \cup \{ \frac{t}{2} \}$$
if $s$ is even.
Then $\dci(t;D)$ admits a $\vec{C}_2$-factorization.
\end{lemma}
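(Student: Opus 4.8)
The plan is to reduce the statement to a question about Hamilton decompositions of circulant \emph{graphs}, using that $t$ is even. The key elementary observation is this: if a graph $G$ on an even number of vertices decomposes into Hamilton cycles, then the digraph obtained from $G$ by replacing every edge $\{u,v\}$ with the directed 2-cycle on $u$ and $v$ admits a $\vec{C}_2$-factorization --- each Hamilton cycle has even length, hence splits into two perfect matchings of $V(G)$, and replacing each perfect matching by the set of directed 2-cycles on its edges yields a $\vec{C}_2$-factor. I would apply this to $\dci(t;D)$ as follows. Put $k=\lfloor\frac{s-1}{2}\rfloor$; then $D=\{\pm1,\ldots,\pm k\}$ if $s$ is odd and $D=\{\pm1,\ldots,\pm k\}\cup\{\frac t2\}$ if $s$ is even, and since $-\frac t2\equiv\frac t2$, in both cases $D=-D$ and $\dci(t;D)$ is exactly the digraph obtained by doubling the edges of $\ci(t;D)$. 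The block $\dci(t;\{\frac t2\})$, present when $s$ is even, is already a single $\vec{C}_2$-factor, so it suffices to prove that $\dci(t;\{\pm1,\ldots,\pm k\})$ has a $\vec{C}_2$-factorization, and by the observation it is enough to decompose the $2k$-regular circulant graph $\ci(t;\{1,\ldots,k\})$ into Hamilton cycles. Note that $s<t$ and $t$ even force $k<\frac t2$, so all differences involved lie strictly below $\frac t2$.

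To produce that Hamilton decomposition I would partition $\{1,\ldots,k\}$ into blocks the two quoted theorems can handle: the singleton $\{1\}$ if $k=1$; consecutive pairs $\{1,2\},\{3,4\},\ldots,\{k-1,k\}$ if $k\ge 2$ is even; and $\{1,2,3\}$ together with consecutive pairs $\{4,5\},\{6,7\},\ldots,\{k-1,k\}$ if $k\ge 3$ is odd (the case $k=0$, i.e.\ $s=2$, being trivial). The block $\{1\}$ gives $\ci(t;\{\pm1\})$, which is itself a Hamilton cycle. Each pair $\{d,d+1\}$ gives $\ci(t;\{\pm d,\pm(d+1)\})$, which is connected since $\gcd(t,d,d+1)=1$ and, because $d+1\le k<\frac t2$, genuinely $4$-regular, hence decomposes into two Hamilton cycles by Theorem~\ref{the:BerFavMah}. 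The block $\{1,2,3\}$ occurs only when $k\ge 3$, so $t\ge 8$ and $3<\frac t2$, making $\ci(t;\{\pm1,\pm2,\pm3\})$ genuinely $6$-regular; since $\gcd(t,1,3)\gcd(t,2)=1\cdot 2=2$, Theorem~\ref{the:Wes} decomposes it into three Hamilton cycles. Taking the union over all blocks gives a Hamilton decomposition of $\ci(t;\{1,\ldots,k\})$, and unwinding the reductions yields the desired $\vec{C}_2$-factorization of $\dci(t;D)$.

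The only genuinely thoughtful point, as opposed to routine bookkeeping, is lining up the hypotheses of Theorem~\ref{the:Wes} with the available differences: the choice $(a,b,c)=(1,3,2)$ is what makes $\gcd(t,a,b)\gcd(t,c)=2$ for \emph{every} even $t$. The remaining verifications --- that each small circulant we peel off really has the degree claimed (so that no listed difference equals $\frac t2$ or the negative of another), and that the connectivity hypothesis of Theorem~\ref{the:BerFavMah} holds --- all follow from $s<t$ and $t$ even, and only the very smallest orders ($s\in\{2,3\}$, where $k\in\{0,1\}$) lie outside the pair/triple machinery and must be dispatched directly.
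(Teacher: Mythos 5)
Your proposal is correct and follows essentially the same route as the paper: reduce to a Hamilton decomposition of the underlying undirected circulant via Theorem~\ref{the:BerFavMah}, split each (even-length) Hamilton cycle into two perfect matchings to obtain $\vec{C}_2$-factors, and treat the difference $\frac{t}{2}$ as its own $\vec{C}_2$-factor. The only cosmetic difference is that for odd $k$ you invoke Theorem~\ref{the:Wes} on the block $\{1,2,3\}$, whereas the paper keeps the singleton $\{1\}$ (whose circulant is itself a Hamilton cycle) together with the pairs $\{2,3\},\{4,5\},\ldots$, avoiding Theorem~\ref{the:Wes} entirely.
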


\begin{proof}
Let $k=\lfloor \frac{s-1}{2} \rfloor$ and  $T=\{ 1,2, \ldots, k\}$. Note that $T$ has a partition
$$\textstyle \P = \left\{ \{ 1 \}, \{ 2,3 \}, \ldots, \{ k-1, k \} \right\}$$
or
$$\textstyle \P = \left\{ \{ 1,2 \}, \{ 3,4 \}, \ldots, \{ k-1, k \} \right\}.$$
In either case, for each $S \in \P$, the circulant graph $\ci(t;S \cup (-S))$ is either a cycle or has a decomposition into Hamilton cycles by Theorem~\ref{the:BerFavMah}. Hence $\ci(t;T \cup (-T))$ admits a $C_t$-factorization, and since $t$ is even, it admits a 1-factorization. It follows that $\dci(t;T \cup (-T))$ admits a $\vec{C}_2$-factorization.

When $s$ is odd, we have $D=T \cup (-T)$, so the result follows directly. When $s$ is even, we observe that $\dci(t;D)=\dci(t;T \cup (-T)) \oplus \dci(t;\{ \frac{t}{2} \})$, and that $\dci(t;\{ \frac{t}{2} \})$ is a $\vec{C}_2$-factor of $\dci(t;D)$. Hence $\dci(t;D)$ admits a $\vec{C}_2$-factorization in both cases.
\end{proof}

\begin{lemma}\label{lem:2-cycles2}
Let $a$, $s$, and $t$ be integers such that $t$ is even, $2 \le s < t$,  $a \le \min \{ \lfloor \frac{s}{3} \rfloor, t-s \}$, and $a \equiv s \pmod{2}$.
Furthermore, let $$\textstyle S=\{ \pm \frac{s+1}{2}, \pm \frac{s+3}{2}, \ldots, \pm (\frac{t}{2}-1)\} \cup \{ \frac{t}{2} \}$$
if $s$ is odd, and
$$\textstyle S=\{ \pm \frac{s}{2}, \pm (\frac{s}{2}+1), \ldots, \pm (\frac{t}{2}-1) \}$$
if $s$ is even.
Then  $\dci(t;S)$ admits an $S$-orthogonal $(\vec{P}_1^{\langle a \rangle},\vec{C}_{2}^{\langle \frac{t-s-a}{2} \rangle})$-subdigraph.
\end{lemma}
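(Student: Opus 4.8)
The goal is to construct, inside the directed circulant $\dci(t;S)$, an $S$-orthogonal subdigraph that is the disjoint union of $a$ directed $1$-paths (i.e.\ arcs) and $\frac{t-s-a}{2}$ directed $2$-cycles. The natural strategy mirrors that of Lemma~\ref{lem:long2}: split the connection set $S$ into three chunks, namely a ``small-difference'' chunk $T$ carrying the arcs, a chunk consisting of antipodal pairs $\{\pm d\}$ carrying one $2$-cycle each (a $2$-cycle uses precisely an arc of difference $d$ and an arc of difference $-d$), and possibly a leftover $\{\frac{t}{2}\}$ (when $s$ is odd) that by itself forms a $2$-cycle. I would organize the argument into the cases $s$ odd and $s$ even, and within each, subcases according to how $a$ compares with $t-s$. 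Throughout, the parity hypothesis $a\equiv s\pmod 2$ and the bound $a\le\min\{\lfloor s/3\rfloor, t-s\}$ are exactly what make the counts work: note $|S|=t-s$, and since $t-s-a$ is even, $\frac{t-s-a}{2}$ is a genuine integer, and the subdigraph will have $a+2\cdot\frac{t-s-a}{2}=t-s=|S|$ arcs, as required for $S$-orthogonality.

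\textbf{Construction of the arcs.} First I would peel off $a$ arcs covering $a$ of the largest-modulus differences, as in Lemma~\ref{lem:long2}: for appropriate ranges of an index $i$, set $A_i=y_i\,y_{c-i}$ for a suitable constant $c$ (roughly $\frac{t}{2}$), so that the arc of $A_i$ has difference $\approx\frac{t}{2}-(2i-1)$ or its negative, making the $A_i$ jointly $T$-orthogonal for $T=\{\pm(\frac t2-1),\pm(\frac t2-3),\dots\}$ of size $a$ (adjusting to an asymmetric set containing $\frac t2$ when $s$ is odd, or $-(\frac t2-a)$ etc.\ depending on parity, exactly as in the earlier lemma). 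The $A_i$ are pairwise disjoint because they occupy two short intervals of vertices near $0$ and near $\frac t2$; the inequality $\frac a2\lesssim$ (gap) needed for disjointness follows from $a\le\lfloor s/3\rfloor$, just as in Lemma~\ref{lem:long2}. Set $A=\bigcup_i A_i$.

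\textbf{Construction of the $2$-cycles.} For each remaining antipodal pair $\{\pm d\}\subseteq S-T$, the directed $2$-cycle $y_j\,y_{j+d}\,y_j$ uses exactly the differences $d$ and $-d$; so I need to choose, for each such $d$, a starting vertex $j=j(d)$ so that all these $2$-cycles are pairwise disjoint and disjoint from $A$. A clean choice is $j(d)=\lfloor d/2\rfloor$ (or a small shift of it), which places the $2$-cycle on vertices $\{\lfloor d/2\rfloor,\ \lfloor d/2\rfloor+d\}=\{\lfloor d/2\rfloor,\ \lceil d/2\rceil \bmod t\}$-type pairs symmetric about $0$; as $d$ ranges over $\{\frac{s+1}{2},\frac{s+3}{2},\dots\}$ (or $\{\frac s2,\frac s2+1,\dots\}$) these pairs are nested and disjoint, and they avoid the vertex intervals used by $A$ precisely because the smallest such $d$ is $\approx\frac s2$ while $A$ lives within distance $\approx\frac a2\le\frac s6$ of $0$ and of $\frac t2$. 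When $s$ is odd, the leftover difference $\frac t2$ gives the extra $2$-cycle $y_j\,y_{j+t/2}\,y_j$ — but wait, $\dci(t;\{\frac t2\})$ already \emph{is} a disjoint union of $\frac t2$ directed $2$-cycles, so I can pick any one of them on two free vertices; alternatively, and more carefully, I should check the parity of $|S-T-\{\frac t2\}|$: since $a\equiv s\pmod 2$ and in the odd-$s$ case $|S|=t-s$ with the singleton $\frac t2$ removed leaving an even set of antipodal pairs, the count of $2$-cycles comes out to $\frac{t-s-a-1}{2}+1=\frac{t-s-a}{2}$ as needed. In the even-$s$ case $S$ is a union of antipodal pairs of size $t-s$, we remove $a$ of them... no: $T$ has size $a$ and is \emph{not} a union of antipodal pairs in general, so I must be a little careful that $S-T$ is a union of antipodal pairs, which it is by the explicit form of $S$ and the chosen $T=\{\pm(\frac t2-1),\dots\}$; it then has $t-s-a$ elements forming $\frac{t-s-a}{2}$ pairs.

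\textbf{The main obstacle.} The delicate point — and the one that will consume most of the write-up — is the disjointness bookkeeping when $a$ is at the top of its range, i.e.\ when $t-s-a$ is small (equal to $0$ or $2$), so that there are essentially no ``long'' $2$-cycles to absorb the leftover differences near $\frac s2$; then those small-modulus differences must themselves be covered by arcs, and the arc-gadgets of the $A_i$ type must be supplemented by boundary gadgets ($B$-type $1$-paths and a short path $P$) exactly as in Subcases 1.2--1.3, 2.3--2.4, etc.\ of Lemma~\ref{lem:long2}. I would therefore split off the subcase $\frac{t-s}{2}\lesssim a$ and handle it with the same $A'$, $B$, $R'$ apparatus, re-using the inequality $a\le\lfloor s/3\rfloor$ to guarantee the interval computations close up; and I would dispatch the finitely many genuinely small cases (small $s$, or $s\le 5$, or $(s,a)$ in a short exceptional list analogous to $\{(9,3),(5,1),(3,1)\}$) by direct construction. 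Once the $S$-orthogonal $(\vec{P}_1^{\langle a\rangle},\vec{C}_2^{\langle (t-s-a)/2\rangle})$-subdigraph is exhibited in every case, the proof is complete.

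\begin{proof}
[Sketch; the detailed construction parallels that of Lemma~\ref{lem:long2}.]
Note first that $|S|=t-s$ and that $t-s-a$ is even, so $\frac{t-s-a}{2}\in\ZZ_{\ge 0}$; a subdigraph consisting of $a$ arcs and $\frac{t-s-a}{2}$ directed $2$-cycles has exactly $t-s$ arcs, which is necessary for $S$-orthogonality. Let the vertex set of $\dci(t;S)$ be $\{y_i:i\in\ZZ_t\}$. We split $S$ as $S=T\,\dot\cup\,S'$, where $|T|=a$ consists of the $a$ differences of largest modulus (together with the sign pattern used in Lemma~\ref{lem:long2}, Cases~2 and~3, when $s$ is odd), and $S'$ is the union of $\frac{t-s-a}{2}$ antipodal pairs $\{\pm d\}$ (including, when $s$ is odd, the singleton $\frac t2$, which contributes the trivial $2$-cycle $y_0\,y_{t/2}\,y_0$; the remaining $\frac{t-s-a-1}{2}$ pairs then account for the other $2$-cycles).

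\emph{Arcs.} As in Lemma~\ref{lem:long2}, define directed $1$-paths $A_i=y_i\,y_{c-i}$ for $c\approx\frac t2$ and $i$ in a range of length $a$; these are pairwise disjoint (using $a\le\lfloor s/3\rfloor$), jointly $T$-orthogonal, and $A:=\bigcup_i A_i$ occupies two short vertex-intervals near $0$ and near $\frac t2$.

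\emph{Two-cycles.} For each pair $\{\pm d\}\subseteq S'$ with $d>0$, the directed $2$-cycle $Z_d:=y_{\lfloor d/2\rfloor}\,y_{\lfloor d/2\rfloor+d}\,y_{\lfloor d/2\rfloor}$ uses precisely the differences $d$ and $-d$. As $d$ runs over the relevant values (all of modulus at least $\approx\frac s2$), the vertex-pairs of the $Z_d$ are nested and pairwise disjoint, and they avoid the intervals used by $A$ because $a\le\lfloor s/3\rfloor$ forces those intervals to have radius $<\frac s2$ about $0$ and about $\frac t2$. When $s$ is odd, add the $2$-cycle on the difference $\frac t2$, placed on a free pair of vertices.

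Setting $D:=A\,\cup\,\bigcup_{\{\pm d\}\subseteq S'} Z_d$ (plus the extra $\frac t2$-cycle when $s$ is odd) yields an $S$-orthogonal $(\vec P_1^{\langle a\rangle},\vec C_2^{\langle(t-s-a)/2\rangle})$-subdigraph of $\dci(t;S)$, \emph{provided} $t-s-a$ is not too small. In the boundary subcases $t-s-a\in\{0,2\}$, the smallest differences near $\frac s2$ can no longer be absorbed by $2$-cycles and must be carried by additional $1$-paths; here we re-run the $A'$, $B$, $R'$ gadgetry of Lemma~\ref{lem:long2} (Subcases~1.2--1.3, 2.3--2.4, 3.3--3.4, 4.3--4.4), whose interval estimates again close up by $a\le\lfloor s/3\rfloor$. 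Finally, the finitely many small cases ($s\le 5$, or $(s,a)$ in a short exceptional list) are verified directly.
\end{proof}
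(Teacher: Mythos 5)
Your overall plan (short antipodal-type arcs for the large differences, plus one directed $2$-cycle per antipodal pair $\{\pm d\}$) is the right flavour, but two of your concrete steps fail, and they are exactly the points where the paper's proof has to work hardest. First, the disjointness claim for the $2$-cycles is false as stated. After removing your $T$, the set $S'$ still contains \emph{consecutive} differences $d$ and $d+1$ (e.g.\ everything from roughly $\frac{s}{2}$ up to $\frac{t}{2}-2c-2$), and the ``symmetric about $0$'' pairs collide for consecutive values: for $d$ even the pairs $\{y_{-d/2},y_{d/2}\}$ and $\{y_{-d/2-1},y_{d/2}\}$ share $y_{d/2}$, and for $d$ odd they share $y_{-(d+1)/2}$ (your other reading, $\{y_{\lfloor d/2\rfloor},y_{\lfloor d/2\rfloor+d}\}$, collides as well). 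So you cannot nest all the $2$-cycles around a single centre. The paper's proof resolves precisely this by splitting $S$ (minus $\frac t2$) into the two parity classes $I$ and $J$ of differences and anchoring the $I$-cycles near $0$ (vertex pairs $B_i=\{y_{-\lceil i/2\rceil},y_{\lfloor i/2\rfloor}\}$) and the $J$-cycles near $\frac t2$ (pairs $C_j$, suitably shifted); this parity split is the key idea of the lemma and is absent from your write-up. Second, your treatment of the difference $\frac t2$ (case $s$ odd) is wrong: a directed $2$-cycle $y_j\,y_{j+t/2}\,y_j$ contains \emph{two} arcs of difference $\frac t2$, so it is not $S$-orthogonal, and your count is also off, since $\frac{t-s-a-1}{2}+1=\frac{t-s-a+1}{2}\neq\frac{t-s-a}{2}$. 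The paper instead covers $\frac t2$ by one of the $a$ directed $1$-paths (the arc $A_0=y_0\,y_{t/2}$), which is available exactly because $a\equiv s\equiv 1\pmod 2$ forces $a\ge 1$.

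A smaller but still substantive issue is your handling of the large-$a$ regime. The generic construction already breaks once $a$ exceeds roughly $\frac{t-s}{2}$ (the arithmetic progression of large-modulus differences carried by the $A_i$ would have to reach below $\frac{s}{2}$), not only when $t-s-a\in\{0,2\}$; and importing the $A'$, $B$, $R'$, long-path machinery of Lemma~\ref{lem:long2} does not produce the required $(\vec{P}_1^{\langle a\rangle},\vec{C}_2^{\langle\frac{t-s-a}{2}\rangle})$ structure, since that machinery builds a long directed path. What the paper actually does in Subcases 1.2 and 2.3 is keep the same $B_i$/$E_i$ vertex pairs and \emph{split} a chosen set $I'$ of antipodal pairs $\{\pm j\}$ into two disjoint $1$-paths, one on $B_j$ (difference $j$) and one on $E_j$ (difference $-j$), thereby trading $2$-cycles for pairs of $1$-paths while keeping all interval estimates controlled by $a\le\lfloor\frac s3\rfloor$. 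Without the parity split, the correct treatment of $\frac t2$, and this pair-splitting mechanism, the proposal does not close.
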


\begin{proof} Note that in both cases, $|S|=t-s$. Let the vertex set of the circulant digraph $\dci(t;S)$ be $V=\{ y_i: i \in \ZZ_t \}$.

\bigskip

\begin{figure}[t]
\centerline{\includegraphics[scale=0.6]{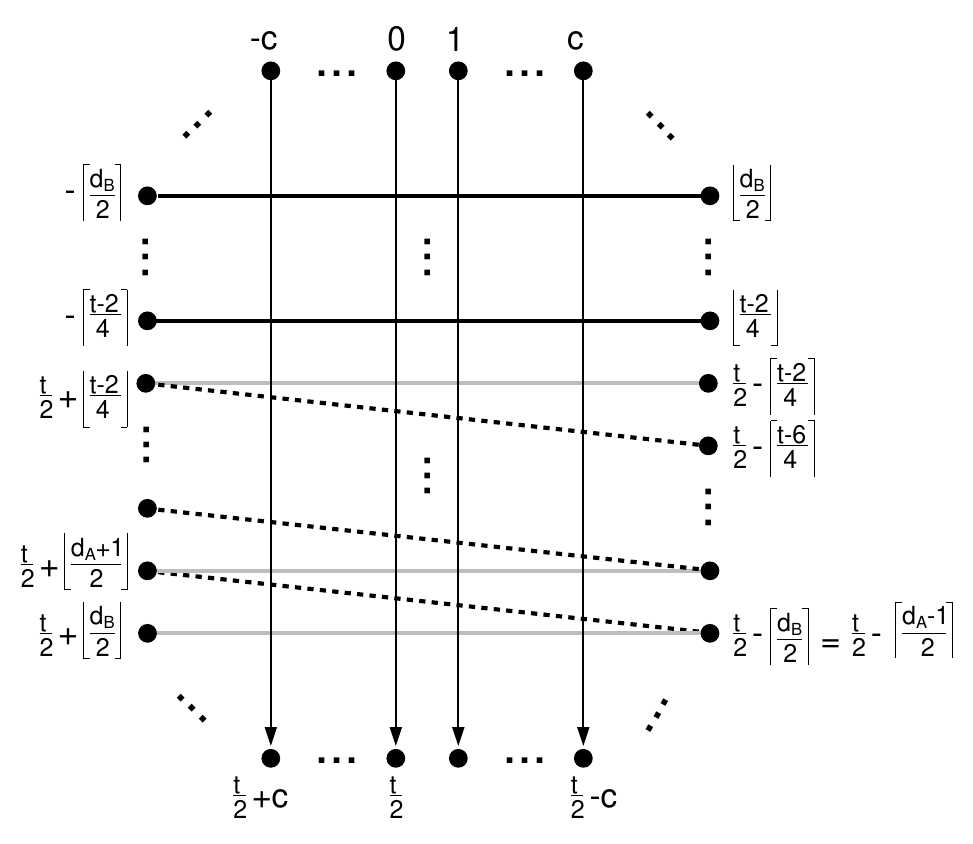}}
\caption{Lemma~\ref{lem:2-cycles2}, Case 1, with $d_A>d_B$ --- digraph $A$ (black arcs) and sets $B_i$ (thick black lines), $E_i$ (grey lines), and $C_i$ (dashed lines). (Only the subscripts of the vertices are specified.)}
\label{fig:So}
\end{figure}

{\sc Case 1:} $s$ is odd. Then $S=\{ \pm \frac{s+1}{2}, \pm \frac{s+3}{2}, \ldots, \pm \frac{t-2}{2}, \frac{t}{2} \}$.
For any $c \in \{ 0, 1, \ldots, \lfloor \frac{t-2}{4} \rfloor \}$ and $i=0, \pm 1, \pm 2, \ldots, \pm c$, define the directed 1-path
$$\textstyle A_{i}=y_{i}\, y_{\frac{t}{2}-i}.$$
Observe that the arc in $A_i$ is of difference $\frac{t}{2}-2i$ if $i \ge 0$, and of difference $-(\frac{t}{2}+2i)$ if $i < 0$.
Thus the $A_{i}$, for $i \in \{ 0, \pm 1, \ldots,\pm c \}$,
jointly contain exactly one arc of each difference in
$$\textstyle T_c=\{ \frac{t}{2}, \pm (\frac{t}{2}-2), \pm (\frac{t}{2}-4),\ldots, \pm (\frac{t}{2}-2c) \}.$$
 Moreover,
the $A_i$ are pairwise disjoint, and $A= \bigcup_{i=-c}^{c} A_{i} $  is a $T_c$-orthogonal $(\vec{P}_1^{\langle 2c+1 \rangle})$-subdigraph of $\dci(t; T_c)$. Its vertex set is
$$\textstyle V(A)=\{ y_i: -c \le i \le c \}  \cup
\{ y_i: \frac{t}{2}-c \le i \le \frac{t}{2}+c \}.$$

Let $d_A=\frac{s+1}{2}$ and $d_B=\frac{s+3}{2}$ if $t \equiv s+1 \pmod{4}$, and $d_A=\frac{s+3}{2}$ and $d_B=\frac{s+1}{2}$ otherwise. Thus $d_A \equiv \frac{t}{2} \pmod{2}$ and $d_B \equiv \frac{t}{2}-1 \pmod{2}$ in both cases.
Let $I=\{ d_B, d_B+2, \ldots, \frac{t}{2}-1 \}$ and $J=\{ d_A, d_A+2, \ldots, \frac{t}{2}-2 \}$, and note that $S=I \du J \du \{ \frac{t}{2} \}$.

For $i \in I$, define the sets
$$B_i= \{ y_{-\lceil \frac{i}{2} \rceil}, y_{\lfloor \frac{i}{2} \rfloor} \}
\qquad \mbox{ and } \qquad
E_i= \{ y_{\frac{t}{2}-\lceil \frac{i}{2} \rceil}, y_{\frac{t}{2}+\lfloor \frac{i}{2} \rfloor} \}.$$
For $i \in J$, define the  set
$$C_i= \{  y_{\frac{t}{2}-\lceil \frac{i-1}{2} \rceil
}, y_{\frac{t}{2}+\lfloor \frac{i+1}{2} \rfloor} \}.$$
See Figure~\ref{fig:So}. Observe that the sets $B_i$, for $i \in I$, are pairwise disjoint, as are the sets $E_i$, for $i \in I$, and the sets $C_i$, for $i \in J$. Moreover,
each of the sets $B_i$, $C_i$, and $E_i$ accommodates arcs of difference $\pm i$. Let $B=\bigcup_{i \in I} B_i$, $C=\bigcup_{j \in J} C_j$, and $E=\bigcup_{i \in I} E_i$, and observe that $B \cap C= \emptyset$, $B \cap E =\emptyset$,
$$\textstyle B \cup C \subseteq \{ y_i:  \lfloor \frac{d_B}{2} \rfloor \le i \le \frac{t}{2}- \lceil \frac{d_A-1}{2} \rceil \} \cup
\{ y_i:  \frac{t}{2}+\lfloor \frac{d_A+1}{2} \rfloor  \le i \le t- \lceil \frac{d_B}{2} \rceil \},$$
and
$$\textstyle B \cup E \subseteq \{ y_i:  \lfloor \frac{d_B}{2} \rfloor \le i \le \frac{t}{2}- \lceil \frac{d_B}{2} \rceil \} \cup
\{ y_i:  \frac{t}{2}+\lfloor \frac{d_B}{2} \rfloor  \le i \le t- \lceil \frac{d_B}{2} \rceil \}.$$

\smallskip

{\sc Subcase 1.1:} $a \le \frac{t-s-1}{2}$. Let $c=\frac{a-1}{2}$ and $A= \bigcup_{i=-c}^{c} A_i$, so $A$ is a
$(\vec{P}_1^{\langle a \rangle})$-subdigraph of $\dci(t; T)$ for $T=\{ \frac{t}{2}, \pm (\frac{t}{2}-2), \pm (\frac{t}{2}-4),\ldots, \pm (\frac{t}{2}-2c)) \}$. Note that, since $a \le \frac{t-s-1}{2}$, we have that $\frac{t}{2}-2c \ge d_A$.

Since $a \le \frac{s}{3}$, it is not difficult to show that $c < \min\{ \lceil \frac{d_A-1}{2} \rceil, \lfloor \frac{d_B}{2} \rfloor\}$. It follows that
$V(A)$ is disjoint from $B \cup C$.

For $i \in I$, let $D_i$ be the directed 2-cycle with vertex set $B_i$, and for $j \in J-T$, let $D_j$ be the directed 2-cycle with vertex set $C_j$. Let $F=\bigcup_{i \in I} D_i \cup \bigcup_{j \in J-T} D_j $. Then $D=A \cup F$ is the desired $S$-orthogonal $(\vec{P}_1^{\langle a \rangle},\vec{C}_{2}^{\langle \frac{t-s-a}{2} \rangle})$-subdigraph of  $\dci(t;S)$.

\smallskip

{\sc Subcase 1.2:} $a \ge \frac{t-s+1}{2}$. Let $c=\lfloor \frac{t-s-1}{4} \rfloor$, and
$A= \bigcup_{i=-c}^{c} A_i$, so $A$ is a
$(\vec{P}_1^{\langle 2c+1 \rangle})$-subdigraph of $\dci(t; T)$ for $T=\{ \frac{t}{2}, \pm (\frac{t}{2}-2), \pm (\frac{t}{2}-4),\ldots, \pm d_A \}$.

As $\frac{t-s+1}{2} \le a \le \frac{s}{3}$, we have that $t-s+1 \le \frac{2}{3}s<s$, and hence
$c < \lfloor \frac{d_B}{2} \rfloor$. It follows that
$V(A)$ is disjoint from $B \cup E$.

Let $b=a-(2c+1)$, and observe that $b$ is even, $b \ge 0$. Let $I' \subseteq I$ such that $|I'|=\frac{b}{2}$.
For $i \in I-I'$, let $D_i$ be the directed 2-cycle with vertex set $B_i$, and for $j \in I'$, let $P_j$ and $P_j'$ be the disjoint directed 1-paths with vertex sets in $B_j$ and $E_j$ that contain arcs of difference $j$ and $-j$, respectively. Let $F=\bigcup_{i \in I-I'} D_i \cup \bigcup_{j \in I'} (P_j \cup P_j')$. Then $D=A \cup F$ is the desired $S$-orthogonal $(\vec{P}_1^{\langle a \rangle},\vec{C}_{2}^{\langle \frac{t-s-a}{2} \rangle})$-subdigraph of  $\dci(t;S)$.

\bigskip

\begin{figure}[t]
\centerline{\includegraphics[scale=0.6]{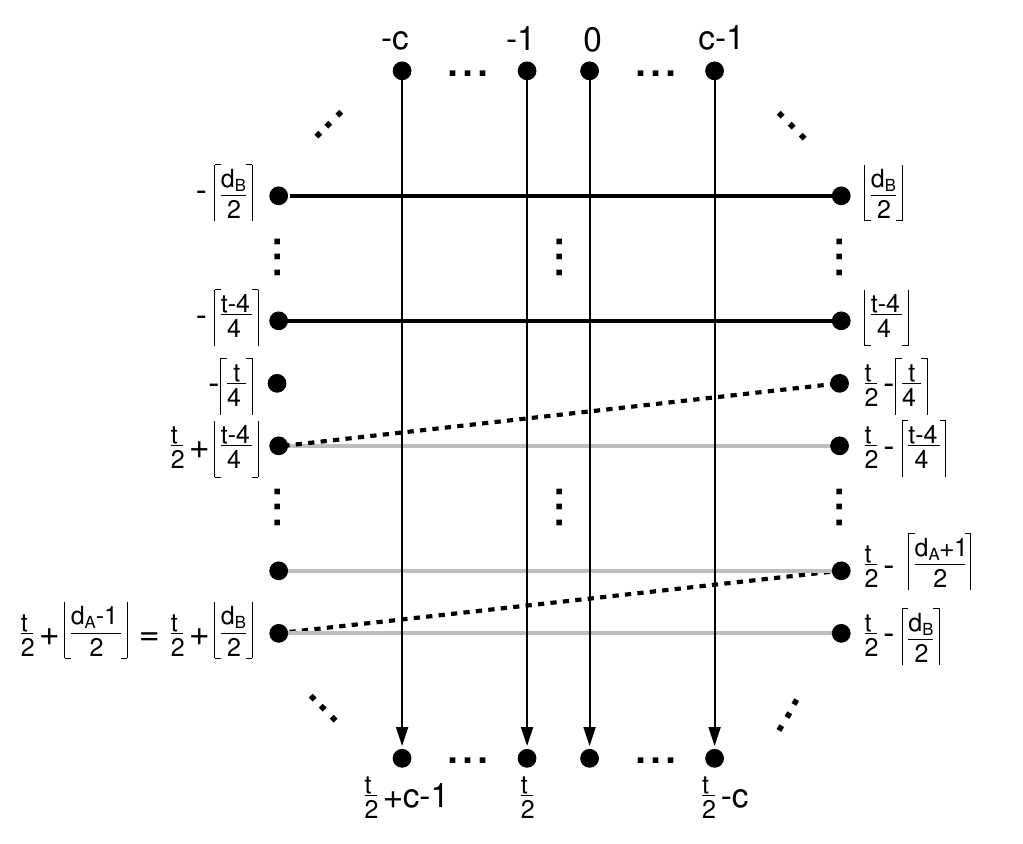}}
\caption{Lemma~\ref{lem:2-cycles2}, Case 2, with $d_A>d_B$ --- digraph $A$ (black arcs) and sets $B_i$ (thick black lines), $E_i$ (grey lines), and $C_i$ (dashed lines). (Only the subscripts of the vertices are specified.)}
\label{fig:Se}
\end{figure}

{\sc Case 2:} $s$ is even. Then $S=\{ \pm \frac{s}{2}, \pm \frac{s+2}{2}, \ldots, \pm \frac{t-2}{2}\}$.
For any $c \in \{ 1,2, \ldots, \lfloor \frac{t}{4} \rfloor \}$ and $i=-c, -c+1, \ldots,c-1$, define the directed 1-path
$$\textstyle A_{i}=y_{i}\, y_{\frac{t}{2}-i-1}.$$
Observe that the arc in $A_i$ is of difference $\frac{t}{2}-(2i+1)$ if $i \ge 0$, and of difference $-(\frac{t}{2}+(2i+1))$ if $i < 0$.
Thus the $A_{i}$, for $i \in \{ -c, -c+1, \ldots,c-1 \}$,
jointly contain exactly one arc of each difference in
$$\textstyle T_c=\{ \pm(\frac{t}{2}-1), \pm (\frac{t}{2}-3), \ldots, \pm (\frac{t}{2}-(2c-1)) \}.$$
 Moreover,
the $A_i$ are pairwise disjoint, and $A= \bigcup_{i=-c}^{c-1} A_{i} $  is a $T_c$-orthogonal $(\vec{P}_1^{\langle 2c \rangle})$-subdigraph of $\dci(t; T_c)$. Its vertex set is
$$\textstyle V(A)=\{ y_i: -c \le i \le c-1 \} \cup
\{ y_i: \frac{t}{2}-c \le i \le \frac{t}{2}+c-1 \}.$$

Let $d_A=\frac{s}{2}$ and $d_B=\frac{s+2}{2}$ if $t \equiv s+2 \pmod{4}$, and $d_A=\frac{s+2}{2}$ and $d_B=\frac{s}{2}$ otherwise.
Note that in both cases, $d_A \equiv \frac{t}{2}-1 \pmod{2}$ and $d_B \equiv \frac{t}{2} \pmod{2}$.
Let $I=\{ d_B, d_B+2, \ldots, \frac{t}{2}-2 \}$ and $J=\{ d_A, d_A+2, \ldots, \frac{t}{2}-1 \}$, so that $S=I \du J$.

For $i \in I$, define the sets
$$B_i= \{ y_{-\lceil \frac{i}{2} \rceil}, y_{\lfloor \frac{i}{2} \rfloor} \}
\qquad \mbox{ and } \qquad
E_i= \{ y_{\frac{t}{2}-\lceil \frac{i}{2} \rceil}, y_{\frac{t}{2}+\lfloor \frac{i}{2} \rfloor} \}.$$
For each $i \in J$, define the set
$$C_i= \{  y_{\frac{t}{2}-\lceil \frac{i+1}{2} \rceil}, y_{\frac{t}{2}+\lfloor \frac{i-1}{2} \rfloor} \}.$$
See Figure~\ref{fig:Se}. Observe that the sets $B_i$, for $i \in I$, are pairwise disjoint, as are the sets $E_i$, for $i \in I$, and the sets $C_i$, for $i \in J$. Moreover,
each of the sets $B_i$, $C_i$, and $E_i$ accommodates arcs of difference $\pm i$. Let $B=\bigcup_{i \in I} B_i$, $C=\bigcup_{j \in J} C_j$, and $E=\bigcup_{i \in I} E_i$, and observe that $B \cap C= \emptyset$, $B \cap E =\emptyset$,
$$\textstyle B \cup C \subseteq \{ y_i:  \lfloor \frac{d_B}{2} \rfloor \le i \le \frac{t}{2}- \lceil \frac{d_A+1}{2} \rceil \} \cup
\{ y_i:  \frac{t}{2}+\lfloor \frac{d_A-1}{2} \rfloor  \le i \le t- \lceil \frac{d_B}{2} \rceil \},$$
and
$$\textstyle B \cup E \subseteq \{ y_i:  \lfloor \frac{d_B}{2} \rfloor \le i \le \frac{t}{2}- \lceil \frac{d_B}{2} \rceil \} \cup
\{ y_i:  \frac{t}{2}+\lfloor \frac{d_B}{2} \rfloor  \le i \le t- \lceil \frac{d_B}{2} \rceil \}.$$

\smallskip

{\sc Subcase 2.1:} $2 \le a \le \frac{t-s+2}{2}$. Let $c=\frac{a}{2}$ and $A= \bigcup_{i=-c}^{c-1} A_i$, so $A$ is a
$(\vec{P}_1^{\langle a \rangle})$-subdigraph of $\dci(t; T)$ for $T=\{  \pm (\frac{t}{2}-1), \pm (\frac{t}{2}-3),\ldots, \pm (\frac{t}{2}-(2c-1)) \}$. Observe that $\frac{t}{2}-(2c-1)\ge d_A$ as $a \le \lfloor \frac{t-s+2}{2} \rfloor$.

Since $a \le \frac{s}{3}$, it is not difficult to show that $c < \min\{ \lceil \frac{d_A}{2} \rceil, \lceil \frac{d_B}{2} \rceil \}$. %If $s \le 6$, then $s=6$ and $a=2$, and we need to replace $A_{-1}$ in $A$ with $A_{-1}'= y_{\frac{t}{2}-2} \, y_{-1}$. In both cases, i
It follows that $V(A)$ is disjoint from $B \cup C$.

For $i \in I$, let $D_i$ be the directed 2-cycle with vertex set $B_i$, and for $j \in J-T$, let $D_j$ be the directed 2-cycle with vertex set $C_j$. Let $F=\bigcup_{i \in I} D_i \cup \bigcup_{j \in J-T} D_j $. Then $D=A \cup F$ is the desired $S$-orthogonal $(\vec{P}_1^{\langle a \rangle},\vec{C}_{2}^{\langle \frac{t-s-a}{2} \rangle})$-subdigraph of  $\dci(t;S)$.

\smallskip

{\sc Subcase 2.2:} $a=0$. With $T=\emptyset$, construct $F$ as in Subcase 2.1. Then $F$ is the desired $S$-orthogonal $(\vec{C}_{2}^{\langle \frac{t-s}{2} \rangle})$-subdigraph of  $\dci(t;S)$.

\smallskip

{\sc Subcase 2.3:} $a \ge \frac{t-s+4}{2}$. Let $c=\lfloor \frac{t-s+2}{4} \rfloor$, and
$A= \bigcup_{i=-c}^{c-1}$, so $A$ is a $T$-orthogonal
$(\vec{P}_1^{\langle 2c \rangle})$-subdigraph of $\dci(t; T)$ for $T=\{ \pm (\frac{t}{2}-1), \pm (\frac{t}{2}-3),\ldots, \pm d_A \}$.

As $\frac{t-s+4}{2} \le a \le \frac{s}{3}$, we have $t-s+4 \le \frac{2}{3}s<s$, and hence
$c < \lceil \frac{d_B}{2} \rceil$. It follows that
$V(A)$ is disjoint from $B \cup E$.

Let $b=a-2c$, and observe that $b$ is even, $b \ge 2$. Let $I' \subseteq I$ such that $|I'|=\frac{b}{2}$.
For $i \in I-I'$, let $D_i$ be the directed 2-cycle with vertex set $B_i$, and for $j \in I'$, let $P_j$ and $P_j'$ be the disjoint directed 1-paths with vertex sets in $B_j$ and $E_j$ that contain arcs of difference $j$ and $-j$, respectively. Let $F=\bigcup_{i \in I-I'} D_i \cup \bigcup_{j \in I'} (P_j \cup P_j')$. Then $D=A \cup F$ is the desired $S$-orthogonal $(\vec{P}_1^{\langle a \rangle},\vec{C}_{2}^{\langle \frac{t-s-a}{2} \rangle})$-subdigraph of  $\dci(t;S)$.

\bigskip

This completes all cases.
\end{proof}

% extension theorem
\section{Main extension results}

We are now ready to sum up the proof of our main extension theorem, which we re-state below for convenience.

\begin{customthm}{\ref{the:main-ext}}
Let $\ell$ and $m_1,\ldots, m_{\ell}$ be positive integers such that  $m_i \ge 2$ for $i=1,\ldots, \ell$. Let $s=m_1+\ldots+m_{\ell}$ and let $t$ be an integer, $t>s$. Furthermore, let $a$ be the number of odd integers in the multiset $\{ \!\! \{ m_1,\ldots,m_{\ell} \} \!\! \}$, and assume that $a \le 2\lfloor \frac{t}{2} \rfloor -s$.

If OP$^\ast(m_1,\ldots,m_{\ell})$ has a solution, then the following also have a solution:
\begin{enumerate}[(1)]
\item OP$^\ast(m_1,\ldots,m_{\ell},t)$; and
\item if $t$ is even, OP$^\ast(m_1,\ldots,m_{\ell},2^{\langle \frac{t}{2} \rangle})$.
\end{enumerate}
\end{customthm}

\begin{proof} In both cases, for $i=1,\ldots,\ell$, let $s_i=\lfloor \frac{m_i}{2} \rfloor$, and $t_i=m_i-2s_i$. Observe that for each $i$, we have that $t_i=1$ if $m_i$ is odd, and $t_i=0$ if $m_i$ is even, so that $\sum_{i=1}^\ell t_i=a$.
Since $\sum_{i=1}^\ell m_i =s$, it follows that $a \equiv s \pmod{2}$, and as $m_i \ge 3$ if $t_i=1$, it follows that $a \le \lfloor \frac{s}{3} \rfloor$.

Let $r=s-\sum_{i=1}^\ell s_i$, and observe that  $r= s-\frac{1}{2}\sum_{i=1}^\ell (m_i-t_i)=s-\frac{s}{2}+\frac{a}{2} =\frac{s+a}{2}$. Hence $r > 0$, and by the assumption on $a$, it follows that $r \le \lfloor \frac{t}{2} \rfloor$.

\begin{enumerate}[(1)]
\item Assume first that $s \not\in \{ 3, 4 \}$ whenever $t$ is even.
    Let $m_{\ell+1}=t$, $s_{\ell +1}=r$, and $t_{\ell +1}=m_{\ell +1}-2s_{\ell +1}$. By the above observation, $0< s_{\ell +1} \le \lfloor \frac{t}{2} \rfloor$ and hence $t_{\ell +1} \ge 0$.

    Define $D \subseteq \ZZ_t^\ast$ as in Lemma~\ref{lem:long1}, and let $S=\ZZ_t^\ast-D$. Then $\dci(t;D)$ admits a $\vec{C}_t$-factorization by Lemma~\ref{lem:long1}. By Lemma~\ref{lem:long2},
    $\dci(t;S)$ admits an $S$-orthogonal $(\vec{P}_1^{\langle a \rangle},\vec{P}_{t-s-a})$-subdigraph, and hence also an $S$-orthogonal $(\vec{P}_0^{\langle \ell-a \rangle},\vec{P}_1^{\langle a \rangle},\vec{P}_{t-s-a})$-subdigraph.
    It follows that the conditions of Corollary~\ref{cor:main}, with $k=\ell+1$, are satisfied, and we conclude that OP$^\ast(m_1,\ldots,m_{\ell},t)$ has a solution.

    If $s=3$ and $t$ is even, then $(m_1,\ldots,m_{\ell})=(3)$, and OP$^\ast(3,t)$ has a solution by Lemma~\ref{lem:long-special}(a).

    If $s=4$ and $t$ is even, then we may assume that $(m_1,\ldots,m_{\ell})=(2,2)$ because OP$^\ast(4)$ does not have a solution. By Lemma~\ref{lem:long-special}(b),  OP$^\ast(2,2,t)$ has a solution whenever $t \ge 8$, while OP$^\ast(2,2,6)$ has a solution by Lemma~\ref{lem:special}.

\item Let $k=\ell+\frac{t}{2}$. As seen above, $0 < r \le \frac{t}{2}$. Let $s_i=1$ for $i=\ell+1,\ldots,\ell+r$, and $s_i=0$ for $i=\ell+r+1,\ldots,k$. For all $i=\ell+1,\ldots,k$, let $m_i=2$ and $t_i=m_i-2s_i$.

    Define $D \subseteq \ZZ_t^\ast$ as in Lemma~\ref{lem:2-cycles1}, and let $S=\ZZ_t^\ast-D$. Then $\dci(t;D)$ admits a $\vec{C}_2$-factorization by Lemma~\ref{lem:2-cycles1}.
    By Lemma~\ref{lem:2-cycles2},
    $\dci(t;S)$ admits an $S$-orthogonal $(\vec{P}_1^{\langle a \rangle},\vec{C}_{2}^{\langle \frac{t-s-a}{2} \rangle})$-subdigraph, and hence also an $S$-orthogonal $(\vec{P}_0^{\langle \ell-a+r \rangle},\vec{P}_1^{\langle a \rangle},\vec{C}_{2}^{\langle \frac{t-s-a}{2} \rangle})$-subdigraph.
    It follows that the conditions of Corollary~\ref{cor:main} are satisfied, and we conclude that OP$^\ast(m_1,\ldots,m_{\ell},2^{\langle \frac{t}{2} \rangle})$ has a solution.
\end{enumerate}
\end{proof}

In the rest of the section, we present some explicit existence results that are a consequence of Theorem~\ref{the:main-ext}.

\begin{cor}\label{cor:ext}
The following problems have a solution:
\begin{enumerate}[(a)]
\item OP$^\ast(m_1,m_2)$ if $2 \le m_1 < m_2$ and $m_1 \not\in \{ 4,6 \}$;
\item OP$^\ast(2^{\langle b \rangle},m)$ if $b \ge 1$.
\end{enumerate}
\end{cor}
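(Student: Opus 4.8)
The plan is to derive both parts directly from Theorem~\ref{the:main-ext} together with the small-case results collected in Lemmas~\ref{lem:special} and \ref{lem:long-special}, handling the cases where the hypotheses of Theorem~\ref{the:main-ext} fail separately. Throughout, the ``seed'' problem will be OP$^\ast$ on a single table (which always has a solution by Theorem~\ref{thm:Kn*}, since OP$^\ast(n;n)$ is the trivial decomposition of $K_n^\ast$ into Hamilton cycles, with the sole exceptions $n=4,6$), or on two tables of size $2$.

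\smallskip
\noindent\textbf{Part (a).} Fix $2\le m_1<m_2$ with $m_1\notin\{4,6\}$. The idea is to apply Theorem~\ref{the:main-ext}(1) with $\ell=1$, $m_1=m_1$, $s=m_1$, and $t=m_2$. For this we need OP$^\ast(m_1)$ to have a solution and $a\le 2\lfloor t/2\rfloor - s$, where $a\in\{0,1\}$ is $1$ precisely when $m_1$ is odd. Since $m_1\ge 2$ and $m_1\notin\{4,6\}$, OP$^\ast(m_1)$ has a solution by Theorem~\ref{thm:Kn*} (when $m_1=2$ or $m_1=3$ this is immediate; for $m_1\ge 5$, $m_1\notin\{4,6\}$ is automatic). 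The inequality $a\le 2\lfloor t/2\rfloor-s$ reads: if $m_1$ is even, $0\le 2\lfloor m_2/2\rfloor-m_1$, i.e. $m_1\le 2\lfloor m_2/2\rfloor$, which holds since $m_1<m_2$ forces $m_1\le m_2-1\le 2\lfloor m_2/2\rfloor$ when $m_2$ is even (as $m_1$ even $\le m_2-2$) and $m_1\le m_2-1 = 2\lfloor m_2/2\rfloor$ when $m_2$ is odd; if $m_1$ is odd, $1\le 2\lfloor m_2/2\rfloor-m_1$, i.e. $m_1\le 2\lfloor m_2/2\rfloor-1$, which again follows from $m_1<m_2$ by a short parity check. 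So Theorem~\ref{the:main-ext}(1) applies and gives OP$^\ast(m_1,m_2)$. A small amount of care is needed to confirm these parity inequalities in all residue classes, but this is routine arithmetic; I would present it as one or two lines.

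\smallskip
\noindent\textbf{Part (b).} Fix $b\ge 1$ and $m\ge 2$; we want OP$^\ast(2^{\langle b\rangle},m)$. For $b=1$ this is OP$^\ast(2,m)$, which is covered by part~(a) when $m\ge 3$ (take $m_1=2$, $m_2=m$) and is trivial when $m=2$ (OP$^\ast(2,2)$ has a solution by Theorem~\ref{thm:Kn*}). For $b\ge 2$, the natural move is Theorem~\ref{the:main-ext}(2) with $\ell=1$, $m_1=m$, so $s=m$, applied with $t$ even and $t/2=b$, i.e. $t=2b$; we need $t>s$, i.e. $2b>m$, and $a\le 2\lfloor t/2\rfloor-s = 2b-m$, where $a\in\{0,1\}$ records the parity of $m$. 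When $2b>m$ this inequality holds (it reads $0\le 2b-m$ or $1\le 2b-m$, both true since $m\le 2b-1$). And OP$^\ast(m)$ has a solution provided $m\notin\{4,6\}$. This disposes of the bulk of the cases: $m\notin\{4,6\}$ and $2b>m$. The remaining cases — (i) $m\in\{4,6\}$, and (ii) $2b\le m$ — need separate arguments.

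\smallskip
\noindent\textbf{The remaining cases of (b), which are the main obstacle.} When $2b\le m$, I would instead seed from the two-table problem: apply Theorem~\ref{the:main-ext}(2) with $\ell=2$, $(m_1,m_2)=(2,m')$ for a suitable split, or — more cleanly — reorganise so that the ``large'' cycle is absorbed by a different grouping; alternatively, since $m\ge 2b$ means $m$ is comparatively large, apply Theorem~\ref{the:main-ext}(1) with the roles reversed, seeding from OP$^\ast(2^{\langle b\rangle})$ (a solution of which is $b$ copies of OP$^\ast(n;2)$, existing by Theorem~\ref{thm:Kn*} since $(2b,2)\notin\{(4,4),(6,6),(6,3)\}$), setting $s=2b$ and $t=m$; here $a=0$ since all seed cycles have length $2$, so the condition $a\le 2\lfloor t/2\rfloor - s$ becomes $2b\le 2\lfloor m/2\rfloor$, which holds exactly when $m\ge 2b$ (if $m$ even) and needs $m\ge 2b+1$ (if $m$ odd) — so one boundary case $m=2b$ with... wait, $m=2b$ is even, fine. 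The genuinely delicate sub-cases are then: $m$ odd with $m=2b-1$ or nearby, and $m\in\{4,6\}$ with $b$ small. For $m\in\{4,6\}$ I would fall back on Lemma~\ref{lem:special}, which explicitly lists OP$^\ast(2^{\langle 3\rangle},4)$, OP$^\ast(2^{\langle b\rangle},6)$ for $2\le b\le 4$, together with Corollary~\ref{cor:even} (which gives OP$^\ast(2^{\langle 2\rangle},4)$ and OP$^\ast(2^{\langle 3\rangle},6)$) and Theorem~\ref{the:main-ext}(1) seeded from these to push $b$ upward: e.g. from a solution of OP$^\ast(2^{\langle b\rangle},4)$ one extends (adding a $2$-cycle via part (2) or re-balancing) to larger $b$. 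The crux — and the step I expect to require the most bookkeeping — is verifying that \emph{every} pair $(b,m)$ with $b\ge 2$ is reached by one of these three mechanisms (seed from one table of size $m$; seed from $b$ tables of size $2$; or finite base cases from Lemma~\ref{lem:special}/Corollary~\ref{cor:even}), i.e. that the inequalities $2b>m$ and $2b\le 2\lfloor m/2\rfloor$ between them cover all $(b,m)$ except a short explicit list that the lemmas handle. I would organise the write-up as a case split on the parity of $m$ and on whether $m\in\{4,6\}$, checking the relevant inequality in each branch and citing the appropriate small-case result where the inequality fails.
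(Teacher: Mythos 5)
Your part (a) and the two generic branches of part (b) (seed $\mathrm{OP}^\ast(2^{\langle b\rangle})$ and add the cycle $m$ when $2b<m$; seed $\mathrm{OP}^\ast(m)$ and add $b$ two-cycles when $2b>m$ and $m\notin\{4,6\}$) are exactly the paper's argument and are correct. The gaps are in the boundary and exceptional cases of (b). First, the case $m=2b$: Theorem~\ref{the:main-ext} requires $t>s$ \emph{strictly}, so neither seeding direction applies when the two sums are equal; your check only verified $a\le 2\lfloor t/2\rfloor-s$ and you then dismissed the boundary (``$m=2b$ is even, fine''), which conflates the two hypotheses. This is an infinite family, not a short list: e.g.\ $\mathrm{OP}^\ast(2^{\langle 4\rangle},8)$, $\mathrm{OP}^\ast(2^{\langle 5\rangle},10)$, \dots are reached by none of your three mechanisms. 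The paper disposes of $m=2b$ by Corollary~\ref{cor:even} (equivalently Theorem~\ref{thm:main-even} applied to $\mathrm{OP}^\ast(2^{\langle b\rangle})$ and $\mathrm{OP}^\ast(m)$, with the small exceptions $\mathrm{OP}^\ast(2^{\langle 2\rangle},4)$ and $\mathrm{OP}^\ast(2^{\langle 3\rangle},6)$ supplied by Lemma~\ref{lem:(2,2,4)} and Lemma~\ref{lem:special}); you cite Corollary~\ref{cor:even} only for those two tiny instances, so the general equal-sum case is left unproved.

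Second, $m\in\{4,6\}$ with $2b>m$: your plan to ``push $b$ upward \dots adding a $2$-cycle via part (2)'' cannot work, because Theorem~\ref{the:main-ext}(2) again needs $t>s$, so it can only append more than $s/2$ two-cycles in one shot. Seeding from $\mathrm{OP}^\ast(2,m)$ it yields $\mathrm{OP}^\ast(m,2^{\langle b\rangle})$ only for $b\ge\frac m2+3$, and seeding from the single table is unavailable since $\mathrm{OP}^\ast(4)$ and $\mathrm{OP}^\ast(6)$ have no solutions. Together with Lemma~\ref{lem:special} (which covers $b\le\frac m2+1$, i.e.\ $(2^{\langle 3\rangle},4)$ and $(2^{\langle 4\rangle},6)$), your toolkit therefore misses precisely $b=\frac m2+2$, i.e.\ $\mathrm{OP}^\ast(2^{\langle 4\rangle},4)$ and $\mathrm{OP}^\ast(2^{\langle 5\rangle},6)$. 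The paper fills exactly these by the bipartite doubling Theorem~\ref{thm:main-even}, combining $\mathrm{OP}^\ast(2,m)$ with $\mathrm{OP}^\ast(2^{\langle \frac m2+1\rangle})$ (equal sums $m+2$); you never invoke Theorem~\ref{thm:main-even}, so these cases, and with them the claimed ``covering of all $(b,m)$'', remain open in your write-up. Both gaps are repairable, but they require adding the doubling construction to your list of mechanisms rather than more bookkeeping with Theorem~\ref{the:main-ext}.
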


\begin{proof}
\begin{enumerate}[(a)]
\item We use Theorem~\ref{the:main-ext}(1) with $\ell=1$, $s=m_1$, and $t=m_2$. Observe that the condition $a \le 2\lfloor \frac{t}{2} \rfloor -s$ is indeed satisfied, and that OP$^\ast(m_1)$ has a solution by Theorem~\ref{thm:Kn*}.
\item If $2b<m$,  we use Theorem~\ref{the:main-ext}(1) with $(m_1,\ldots,m_\ell)=(2^{\langle b \rangle})$ and $t=m$. Note that $a=0$ and that OP$^\ast(2^{\langle b \rangle})$ has a solution by Theorem~\ref{thm:Kn*}.

    If $m=2b$, the result follows from Corollary~\ref{cor:even}.

    Hence assume $m<2b$. If $m \not\in \{ 4,6 \}$, then OP$^\ast(m)$ has a solution by Theorem~\ref{thm:Kn*}, and
    we can use Theorem~\ref{the:main-ext}(2) with $\ell=1$, $s=m_1=m$, and $t=2b$. Note that $a=0$ if $s$ is even, and $a=1$ if $s$ is odd. In either case, $a \le 2 \lfloor \frac{t}{2} \rfloor -s$.   Hence OP$^\ast(m,2^{\langle b \rangle})$ has a solution.

    Let $m \in \{ 4,6 \}$, so that $b \ge \frac{m}{2}+1$. Now OP$^\ast(m,2^{\langle \frac{m}{2}+1 \rangle})$ has a
     solution by Lemma~\ref{lem:special}. Since OP$^\ast(2,m)$ and OP$^\ast(2^{\langle \frac{m}{2}+1 \rangle})$  have  solutions by (a) and Theorem~\ref{thm:Kn*}, respectively, OP$^\ast(m,2^{\langle \frac{m}{2}+2 \rangle})$
     has a solution by Theorem~\ref{thm:main-even}. Finally, since OP$^\ast(2,m)$ has a solution, OP$^\ast(m,2^{\langle b \rangle})$ for solution for $b\ge \frac{m}{2}+3$ by Theorem~\ref{the:main-ext}(2).
\end{enumerate}
\end{proof}

We are are now ready to summarize the proof of our almost-complete solution to the directed Oberwolfach problem with two tables.

\begin{customthm}{\ref{thm:main2}}
Let $m_1$ and $m_2$ be integers such that $2 \le m_1 \le m_2$. Then OP$^\ast(m_1,m_2)$ has a solution if and only if $(m_1,m_2) \ne (3,3)$, with a possible exception in the case that $m_1 \in \{ 4,6 \}$, $m_2$ is even, and $m_1+m_2 \ge 14$.
\end{customthm}

\begin{proof}
Assuming $m_1 \le m_2$, OP$^\ast(m_1,m_2)$  has a solution in the case that $m_1 \ge 3$, $m_1+m_2$ is odd, and $(m_1,m_2) \ne (4,5)$ by Corollary~\ref{cor:OP}; for  $(m_1,m_2) = (4,5)$ by Lemma~\ref{lem:special}; for $m_1=m_2 \ne 3$ by Theorem~\ref{thm:Kn*}; for  $2 \le m_1 < m_2$ and $m_1 \not\in \{ 4,6 \}$ by Corollary~\ref{cor:ext}(a); and for $(m_1,m_2) \in \{ (4,6),(4,8) \}$ by Lemma~\ref{lem:special}. Hence the result.
\end{proof}

%%%%

In the remaining corollaries in this section, we use the following notation: for an integer $m$, we let $\delta(m)=0$ if $m$ is even, and $\delta(m)=1$ if $m$ is odd.

\begin{cor}\label{cor:ext2}
Let $2 \le m_1 \le m_2 \le m_3$ be integers such that OP$^\ast(m_1,m_2)$  has a solution, and assume $m_3 \ge m_1+m_2+\delta(m_1)+\delta(m_2)$.

Then OP$^\ast(m_1,m_2,m_3)$  has a solution.
\end{cor}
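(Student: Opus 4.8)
The plan is to obtain this as a direct consequence of Theorem~\ref{the:main-ext}(1), applied with $\ell=2$, with the two given cycle lengths $m_1,m_2$ in the role of $m_1,\ldots,m_\ell$, and with $t=m_3$. In the notation of that theorem we then have $s=m_1+m_2$, and the quantity $a$ (the number of odd integers among $m_1,\ldots,m_\ell$) equals $\delta(m_1)+\delta(m_2)$. Since OP$^\ast(m_1,m_2)$ has a solution by hypothesis and $m_1,m_2\ge 2$, the only things left to check before we may invoke Theorem~\ref{the:main-ext}(1) are that $t>s$ and that $a\le 2\lfloor t/2\rfloor-s$.

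For the arithmetic condition, the key observation is that $m_i+\delta(m_i)$ is even for each $i$, so $s+a=m_1+m_2+\delta(m_1)+\delta(m_2)$ is even; by hypothesis $m_3\ge s+a$. If $m_3$ is even, then $2\lfloor m_3/2\rfloor=m_3\ge s+a$, so $a\le 2\lfloor m_3/2\rfloor-s$. If $m_3$ is odd, then $m_3\ne s+a$ (they have different parities), so in fact $m_3\ge s+a+1$, whence $2\lfloor m_3/2\rfloor=m_3-1\ge s+a$, again as needed. Moreover, if at least one of $m_1,m_2$ is odd, then $m_3\ge m_1+m_2+1>s$, so Theorem~\ref{the:main-ext}(1) applies and we are done.

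The one situation not covered above is $m_3=s=m_1+m_2$, which forces $\delta(m_1)=\delta(m_2)=0$, i.e.\ $m_1,m_2$, and hence $m_3$, are all even with $m_1+m_2=m_3$; here Theorem~\ref{the:main-ext} does not apply and I would instead use the bipartite construction. If $m_3\notin\{4,6\}$, then OP$^\ast(m_3)$ has a solution by Theorem~\ref{thm:Kn*}, so Theorem~\ref{thm:main-even} (with $\ell=2$, $k=3$) turns solutions to OP$^\ast(m_1,m_2)$ and OP$^\ast(m_3)$ into a solution to OP$^\ast(m_1,m_2,m_3)$. The remaining possibilities are $m_3=4$, forcing $(m_1,m_2)=(2,2)$, handled by Lemma~\ref{lem:(2,2,4)}, and $m_3=6$, forcing $(m_1,m_2)=(2,4)$, handled by Lemma~\ref{lem:special}. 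I expect no real obstacle: the corollary is essentially a repackaging of Theorem~\ref{the:main-ext}(1), and the only care needed is the parity bookkeeping in the inequality check and the separate treatment of the degenerate equal-size case $m_3=m_1+m_2$.
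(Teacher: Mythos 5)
Your proposal is correct and follows essentially the same route as the paper: apply Theorem~\ref{the:main-ext}(1) with $\ell=2$, $s=m_1+m_2$, $t=m_3$, using the parity of $s+a$ to verify $a\le 2\lfloor t/2\rfloor-s$, and handle the leftover case $m_3=m_1+m_2$ (all lengths even) via Theorem~\ref{thm:main-even}, with the exceptions $m_3\in\{4,6\}$ settled by Lemma~\ref{lem:(2,2,4)} and Lemma~\ref{lem:special}. The paper's proof is identical in substance, including the same case split and the same citations.
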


\begin{proof}
We attempt to use Theorem~\ref{the:main-ext} with $\ell=2$, $s=m_1+m_2$, and $t=m_3$, observing that  $a=\delta(m_1)+\delta(m_2)$. Since $m_3 \ge m_1+m_2+\delta(m_1)+\delta(m_2)$ by assumption, and $m_1+m_2+\delta(m_1)+\delta(m_2)$ is even, we have that
$$\textstyle 2 \lfloor \frac{t}{2} \rfloor = m_3-\delta(m_3) \ge m_1+m_2+\delta(m_1)+\delta(m_2)=s+a.$$
It follows that $a \le 2 \lfloor \frac{t}{2} \rfloor-s$, and if $t>s$, then OP$^\ast(m_1,m_2,m_3)$  has a solution by Theorem~\ref{the:main-ext}(1).

Hence we may assume that $t=s$. Then $m_1$, $m_2$, and $m_3$ are all even, and $m_3=m_1+m_2$. If $m_3 \not\in \{ 4,6 \}$, then OP$^\ast(m_3)$  has a solution, and hence OP$^\ast(m_1,m_2,m_3)$  has a solution by Theorem~\ref{thm:main-even}. If $m_3=4$, then $(m_1,m_2)=(2,2)$, and if $m_3=6$, then $(m_1,m_2)=(2,4)$. Since OP$^\ast(2,2,4)$ and OP$^\ast(2,4,6)$ have solutions by Lemmas~\ref{lem:(2,2,4)} and \ref{lem:special}, respectively, the proof is complete.
\end{proof}

We remark that for $n=m_1+m_2+m_3 \le 100$ with $n$ odd, more existence results on OP$^\ast(m_1,m_2,m_3)$ follow from Corollary~\ref{cor:OP}.

\begin{cor}\label{cor:ext3}
Let $3 \le m_1 \le m_2$ and $b \ge 1$ be integers.
Then OP$^\ast(m_1,m_2,2^{\langle b \rangle})$  has a solution in each of the following cases:
\begin{enumerate}[(i)]
\item OP$^\ast(m_1,m_2)$ has a solution, and $2b \ge m_1+m_2+\delta(m_1)+\delta(m_2)$ or $2b \le m_2-m_1-\delta(m_1)$;
\item $(m_1,m_2) =(3,3)$;
\item $m_1 \in \{ 4,6 \}$, $m_2$ is even, $m_1+m_2 \ge 14$, and $2b \ge m_1+m_2+4$ or $2b \le m_2-m_1$.
\end{enumerate}
\end{cor}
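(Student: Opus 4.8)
The plan is to dispatch the three cases by reducing each to Theorem~\ref{the:main-ext}, to the bipartite Theorem~\ref{thm:main-even}, or (for the seeds) to Corollary~\ref{cor:ext}(b) and Lemma~\ref{lem:special}. The recurring device is a \emph{regrouping} of $\{\!\!\{ m_1,m_2,2^{\langle b\rangle}\}\!\!\}$: when $2b$ is large one views OP$^\ast(m_1,m_2,2^{\langle b\rangle})$ as extending OP$^\ast(m_1,m_2)$ by the block $2^{\langle b\rangle}$, and when $2b$ is small one views it as extending OP$^\ast(m_1,2^{\langle b\rangle})$ by the single cycle $\vec C_{m_2}$ (the smaller side then being furnished by Corollary~\ref{cor:ext}(b)). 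In either branch I would also isolate the degenerate instance in which the two sides of the construction have equal total size, since there Theorem~\ref{the:main-ext} does not apply and one must fall back on Theorem~\ref{thm:main-even}. For case (i), if $2b\ge m_1+m_2+\delta(m_1)+\delta(m_2)$ I would apply Theorem~\ref{the:main-ext}(2) with $\ell$-part $(m_1,m_2)$, $s=m_1+m_2$, $t=2b$ and $a=\delta(m_1)+\delta(m_2)$: the hypothesis is exactly $a\le t-s$, and $t>s$ fails only when $m_1,m_2$ are both even with $2b=m_1+m_2$, which is then settled by Theorem~\ref{thm:main-even} on the parts $\{m_1,m_2\}$ and $\{2^{\langle b\rangle}\}$. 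If instead $2b\le m_2-m_1-\delta(m_1)$, I would regroup with small side $(m_1,2^{\langle b\rangle})$ and big side $\vec C_{m_2}$, so $s=m_1+2b$, $t=m_2$, $a=\delta(m_1)$; Theorem~\ref{the:main-ext}(1) applies once $a\le 2\lfloor t/2\rfloor-s$, i.e. $2b\le m_2-m_1-\delta(m_1)-\delta(m_2)$, and a one-line parity remark closes the gap: $m_2-m_1-\delta(m_1)$ is odd whenever $m_2$ is odd, so the even number $2b$ meeting the stated bound automatically meets the stronger one. The equal-size instance $t=s$ again forces $m_1,m_2$ even and is absorbed by Theorem~\ref{thm:main-even}, with OP$^\ast(m_2)$ solvable by Theorem~\ref{thm:Kn*} except for the single leftover OP$^\ast(2,4,6)$ supplied by Lemma~\ref{lem:special}.

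Case (ii) cannot invoke case (i) because OP$^\ast(3,3)$ has no solution; instead one notes that OP$^\ast(3,3,2^{\langle b\rangle})=$ OP$^\ast(2^{\langle b\rangle},3^{\langle 2\rangle})$. For $1\le b\le 5$ this is exactly Lemma~\ref{lem:special}, and for $b\ge 6$ I would extend OP$^\ast(2,3,3)$ (the $b=1$ instance of Lemma~\ref{lem:special}) by $2^{\langle b-1\rangle}$ via Theorem~\ref{the:main-ext}(2), where $s=8$, $t=2b-2$, $a=2$, and both $t>s$ and $a\le t-s$ reduce to $b\ge 6$.

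Case (iii) is the delicate one, since here OP$^\ast(m_1,m_2)$ sits in the currently unresolved range and is therefore unavailable as a seed. The opening observation is that $m_1\in\{4,6\}$, $m_2$ even and $m_1+m_2\ge 14$ force $m_1<m_2$, hence $m_2-m_1\ge 2$ and $m_2\ge 8$. Consequently the small-side analysis of case~(i) goes through verbatim with $\delta(m_1)=\delta(m_2)=0$: for $2b\le m_2-m_1$ use Theorem~\ref{the:main-ext}(1), or Theorem~\ref{thm:main-even} when $2b=m_2-m_1$ (legitimate because $m_2\ge 8$). Taking $b=1$ already yields a solution to OP$^\ast(m_1,m_2,2)$, and this is the seed for the remaining subcase: when $2b\ge m_1+m_2+4$, extend OP$^\ast(m_1,m_2,2)$ by $2^{\langle b-1\rangle}$ through Theorem~\ref{the:main-ext}(2) with $s=m_1+m_2+2$, $t=2b-2$, $a=0$, so the requirement $a\le t-s$ reads precisely $2b\ge m_1+m_2+4$; the boundary $2b=m_1+m_2+4$ is once more disposed of by Theorem~\ref{thm:main-even} on the parts $\{m_1,m_2,2\}$ and $\{2^{\langle b-1\rangle}\}$.

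I expect the main obstacle to be the bookkeeping at the several boundary instances $t=s$, each of which must be rerouted through Theorem~\ref{thm:main-even} and requires checking that the relevant single-cycle problem OP$^\ast(m_2)$ (or 2-cycle block) is solvable, together with the parity check in case~(i) that the stated bound $2b\le m_2-m_1-\delta(m_1)$ is already sufficient, and, in case~(iii), the structural point that $m_2-m_1\ge 2$ lets one bootstrap from OP$^\ast(m_1,m_2,2)$ in place of the unavailable OP$^\ast(m_1,m_2)$. The remaining verifications — that the chosen $\ell$-parts are genuinely solvable and that $t$ is even wherever Theorem~\ref{the:main-ext}(2) is used — are routine.
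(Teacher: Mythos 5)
Your proposal is correct and follows essentially the same route as the paper: the same regrouping of $\{\!\!\{m_1,m_2,2^{\langle b\rangle}\}\!\!\}$, the same applications of Theorem~\ref{the:main-ext}(1)/(2) with identical parameters, the same fallback to Theorem~\ref{thm:main-even} at the $t=s$ boundaries (including the OP$^\ast(2,4,6)$ leftover via Lemma~\ref{lem:special}), and the same seeds — Lemma~\ref{lem:special} for case (ii), and for case (iii) a single-cycle extension of OP$^\ast(2,m_1)$, which the paper packages as Corollary~\ref{cor:ext2} while you rederive it as the $b=1$ instance of your small-$b$ branch. Your explicit parity remark in case (i), checking $a\le 2\lfloor t/2\rfloor-s$ when $m_2$ is odd, is a welcome bit of extra care over the paper's terser ``$a\le t-s$''.
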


\begin{proof}
\begin{enumerate}[(i)]
\item If $2b \ge m_1+m_2+\delta(m_1)+\delta(m_2)$, then we attempt to use Theorem~\ref{the:main-ext}(2) with $\ell=2$, $s=m_1+m_2$, and $t=2b$. Note that $a=\delta(m_1)+\delta(m_2) \le 2b-s$. If $t>s$, then OP$^\ast(m_1,m_2,2^{\langle b \rangle})$  has a solution by Theorem~\ref{the:main-ext}(2). If, however, $t=s$, then $m_1$ and $m_2$ are both even and $2b=m_1+m_2$, and since OP$^\ast(2^{\langle b \rangle})$ and OP$^\ast(m_1,m_2)$ both have solutions, the result follows from Theorem~\ref{thm:main-even}.

    If $2b \le m_2-m_1-\delta(m_1)$, then we attempt to use Theorem~\ref{the:main-ext}(1) with $\ell=b+1$, $s=m_1+2b$, and $t=m_2$. Note that $a=\delta(m_1) \le t-s$. If $t>s$, then OP$^\ast(m_1,2^{\langle b \rangle},m_2)$  has a solution by Theorem~\ref{the:main-ext}(1). If, however, $t=s$, then $m_1$ is even and $m_2=m_1+2b$. If $m_2 \not \in \{ 4,6 \}$, then OP$^\ast(m_2)$ has a solution by Theorem~\ref{thm:Kn*}, as does OP$^\ast(m_1,2^{\langle b \rangle})$ by Corollary~\ref{cor:ext}(b), and the result follows from Theorem~\ref{thm:main-even}. Since $m_1 \ge 3$, we know $m_2 \ne 4$. Finally, if $m_2=6$, then $m_1=4$ and $b=1$, and OP$^\ast(2,4,6)$ has a solution by Lemma~\ref{lem:special}.

\item OP$^\ast(2,3,3)$ has a solution by Lemma~\ref{lem:special}. Using Theorem~\ref{the:main-ext}(2) with $\ell=3$, $s=8$, and $t=2(b-1)$, assuming $b \ge 6$, we have that $2=a \le t-s$, and OP$^\ast(3,3,2^{\langle b \rangle})$ has a solution. For $2 \le b \le 5$, the result follows from  Lemma~\ref{lem:special}.

    %Similarly, OP$^\ast(4,6)$ has a solution by Lemma~\ref{lem:special}. Using Theorem~\ref{the:main-ext}(2) with $\ell=2$, $s=10$, and $t=2b$, assuming $b \ge 6$, we have that $0=a \le t-s$ and $t>s$, so that OP$^\ast(4,6,2^{\langle b \rangle})$ has a solution. If $b=2k+1$ for an integer $k \ge 1$, then OP$^\ast(4,6,2^{\langle b \rangle})$ has a solution by Theorem~\ref{thm:main-even} since both OP$^\ast(4,2^{\langle k+1 \rangle})$ and OP$^\ast(6,2^{\langle k \rangle})$ have solutions by Corollary~\ref{cor:ext}(b). The remaining cases OP$^\ast(2,4,6)$, OP$^\ast(2^{\langle 2 \rangle},4,6)$, and OP$^\ast(2^{\langle 4 \rangle},4,6)$ have solutions by Lemma~\ref{lem:special}.

\item By Corollary~\ref{cor:ext}(2), OP$^\ast(2,m_1)$ has a solution, and since $m_2 \ge 2+m_1$, we know that OP$^\ast(2,m_1,m_2)$ has a solution by Corollary~\ref{cor:ext2}.

    If $2b \ge m_1+m_2+4$, then we attempt to use Theorem~\ref{the:main-ext}(2) with $\ell=3$, $s=2+m_1+m_2$, and $t=2(b-1)$. Note that $0=a \le t-s$. If $t>s$, then OP$^\ast(m_1,m_2,2^{\langle b \rangle})$  has a solution by Theorem~\ref{the:main-ext}(2), and if $t=s$, then the result follows from Theorem~\ref{thm:main-even}.

   If $2b \le m_2-m_1$, then we attempt to use Theorem~\ref{the:main-ext}(1) with $\ell=b+1$, $s=m_1+2b$, and $t=m_2$. Note that $0=a\le t-s$. If $t>s$, then OP$^\ast(m_1,2^{\langle b \rangle},m_2)$  has a solution by Theorem~\ref{the:main-ext}(1), and if $t=s$, then the result follows from Theorem~\ref{thm:main-even} because OP$^\ast(m_1,2^{\langle b \rangle})$ and OP$^\ast(m_2)$ have solutions by Corollary~\ref{cor:ext}(2) and Theorem~\ref{thm:Kn*}, respectively.
\end{enumerate}
\end{proof}

%%%%%

Repeatedly applying Theorem~\ref{the:main-ext}(1) we obtain the following.

\begin{cor}\label{cor:global}
For $i=1,2,\ldots,k$, let $m_i \ge 2$. Assume $m_1 \not \in \{ 4,6 \}$ and that for each $\ell<k$, we have $m_{\ell+1} -\delta(m_{\ell+1}) > \sum_{i=1}^{\ell} (m_i + \delta(m_i))$. Then
OP$^\ast(m_1,\ldots,m_k)$ has a solution.
\end{cor}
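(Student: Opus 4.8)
The plan is to prove this by induction on $k$. For the base case $k=1$, the hypothesis says $m_1 \notin \{4,6\}$, so OP$^\ast(m_1)$ has a solution by Theorem~\ref{thm:Kn*} (note $m_1 \ge 2$; the only excluded uniform cases of order $m_1$ are $m_1 \in \{4,6\}$, and $m_1 = 3$ corresponds to OP$^\ast(3)$, which does have a solution). For the inductive step, suppose $k \ge 2$ and the result holds for $k-1$. The hypotheses for $(m_1,\ldots,m_k)$ immediately imply the hypotheses for $(m_1,\ldots,m_{k-1})$: we still have $m_1 \notin \{4,6\}$, and the chain of inequalities $m_{\ell+1} - \delta(m_{\ell+1}) > \sum_{i=1}^{\ell}(m_i + \delta(m_i))$ for $\ell < k-1$ is a subset of the assumed chain. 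So by the induction hypothesis, OP$^\ast(m_1,\ldots,m_{k-1})$ has a solution.

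Now I would apply Theorem~\ref{the:main-ext}(1) with $\ell = k-1$, with the ``small'' multiset being $\{\!\!\{ m_1,\ldots,m_{k-1}\}\!\!\}$, so that $s = m_1 + \ldots + m_{k-1}$, and with $t = m_k$. Write $a$ for the number of odd integers among $m_1,\ldots,m_{k-1}$; then $a = \sum_{i=1}^{k-1}\delta(m_i)$. To invoke Theorem~\ref{the:main-ext}, I need two things: first, $t > s$, i.e.\ $m_k > m_1 + \ldots + m_{k-1}$; and second, $a \le 2\lfloor \tfrac{t}{2}\rfloor - s$. The hypothesis with $\ell = k-1$ gives
\[
m_k - \delta(m_k) > \sum_{i=1}^{k-1}\bigl(m_i + \delta(m_i)\bigr) = s + a.
\]
Since $2\lfloor \tfrac{m_k}{2}\rfloor = m_k - \delta(m_k)$, this says $2\lfloor \tfrac{t}{2}\rfloor > s + a$, which is even stronger than the required $a \le 2\lfloor \tfrac{t}{2}\rfloor - s$. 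It also forces $m_k > s + a \ge s$, giving $t > s$. Hence both hypotheses of Theorem~\ref{the:main-ext}(1) are met, and since OP$^\ast(m_1,\ldots,m_{k-1})$ has a solution, we conclude that OP$^\ast(m_1,\ldots,m_{k-1},m_k)$ has a solution. This completes the induction.

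\textbf{Anticipated obstacle.} The argument is essentially a bookkeeping exercise: the real content is entirely in Theorem~\ref{the:main-ext}. The only genuine point requiring care is the base case: one must confirm that the sole relevant exception in Theorem~\ref{thm:Kn*} for a single table of size $m_1$ is precisely $m_1 \in \{4,6\}$ (the exceptions $(4,4)$ and $(6,6)$ there concern $K_4^\ast$ decomposed into $4$-cycles and $K_6^\ast$ into $6$-cycles, which is the $k=1$ situation; the case $(6,3)$ is not a single-table case). I also need to double-check the trivial subtlety that when $t = s$ is not possible here — but we showed $t > s$ strictly, so the ``$t = s$'' branch of Theorem~\ref{the:main-ext} never arises, and no appeal to the bipartite Theorem~\ref{thm:main-even} or to small-case lemmas is needed. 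Everything else is immediate from the telescoping inequality.
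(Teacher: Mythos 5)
Your proof is correct and is essentially the paper's own argument: an induction whose base case is OP$^\ast(m_1)$ via Theorem~\ref{thm:Kn*} (using $m_1 \notin \{4,6\}$), and whose inductive step applies Theorem~\ref{the:main-ext}(1) with $s=m_1+\ldots+m_\ell$, $t=m_{\ell+1}$, the hypothesis $m_{\ell+1}-\delta(m_{\ell+1}) > \sum_{i=1}^{\ell}(m_i+\delta(m_i))$ giving both $t>s$ and $a \le 2\lfloor \frac{t}{2}\rfloor - s$. The bookkeeping, including the identification of the single-table exceptions in Theorem~\ref{thm:Kn*}, matches the paper's proof.
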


\begin{proof}
Since $m_1 \not \in \{ 4,6 \}$, by Theorem~\ref{thm:Kn*}, OP$^\ast(m_1)$ has a solution.

Fix any $\ell <k$, and assume OP$^\ast(m_1,\ldots,m_{\ell})$ has a solution. Let $a$ be the number of odd integers in the multiset $\{ \!\! \{ m_1,\ldots,m_{\ell} \} \!\! \}$, let $s=\sum_{i=1}^{\ell} m_i$ and $t=m_{\ell+1}$. Then $a=\sum_{i=1}^{\ell} \delta(m_i) < m_{\ell+1} -\delta(m_{\ell+1}) - \sum_{i=1}^{\ell} m_i=2 \lfloor \frac{t}{2} \rfloor -s$, so the conditions of Theorem~\ref{the:main-ext}(1) are satisfied. It follows that OP$^\ast(m_1,\ldots,m_{\ell},m_{\ell+1})$ has a solution.

The result follows by induction.
\end{proof}

\section{Solutions to small cases of OP$^\ast$}\label{sec:small}

In this section, we solve the directed Oberwolfach problem for orders up to 13.

\begin{theo}\label{thm:small}
Let $2 \le m_1 \le \ldots \le m_k$ be integers, and assume that $n=m_1+ \ldots + m_k \le 13$. Then OP$^\ast(m_1,\ldots,m_k)$ has a solution if and only if $(m_1,\ldots,m_k) \not\in \{ (4),(6),(3,3) \}$.
\end{theo}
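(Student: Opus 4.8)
The plan is to treat the two implications separately, with essentially all the work in the ``if'' direction consisting of an exhaustive case analysis.

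For the ``only if'' direction, note that OP$^\ast(4)$, OP$^\ast(6)$, and OP$^\ast(3,3)$ are precisely the problems OP$^\ast(n;m)$ with $(n,m) \in \{(4,4),(6,6),(6,3)\}$, none of which has a solution by Theorem~\ref{thm:Kn*}. There is no other necessary condition to rule out, so this direction is immediate.

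For the ``if'' direction, I would enumerate every multiset $\{ \!\! \{ m_1,\ldots,m_k \} \!\! \}$ of integers $\ge 2$ whose sum $n$ lies in $\{2,\ldots,13\}$, discard the three forbidden multisets, and exhibit a solution for each of the rest by invoking an earlier result. I would group the cases as follows: \emph{(i)} $k=1$, i.e.\ OP$^\ast(n)$ with $n \notin \{4,6\}$, together with all uniform cases $k \ge 2$ (those with $m \mid n$) --- by Theorem~\ref{thm:Kn*}; \emph{(ii)} $k=2$ --- by Theorem~\ref{thm:main2}, which itself rests on Corollary~\ref{cor:ext}(a), Corollary~\ref{cor:OP}, and Lemma~\ref{lem:special} (for $(4,5),(4,6),(4,8)$); \emph{(iii)} at least two parts equal to $2$ --- OP$^\ast(2^{\langle b \rangle},m)$ by Corollary~\ref{cor:ext}(b), plus OP$^\ast(2,2,4)$, OP$^\ast(2,4,6)$, OP$^\ast(2,3,3)$, OP$^\ast(2^{\langle b \rangle},3^{\langle 2 \rangle})$, OP$^\ast(2^{\langle b \rangle},6)$ by Lemmas~\ref{lem:(2,2,4)} and \ref{lem:special}; \emph{(iv)} all parts even --- by Corollaries~\ref{cor:even}, \ref{cor:even2}, \ref{cor:even3} (e.g.\ OP$^\ast(2,2,4,4)$); \emph{(v)} $n$ odd with all parts $\ge 3$ and the multiset not $(4,5)$ or $(3,3,5)$ --- by Corollary~\ref{cor:OP}; \emph{(vi)} three or more tables reachable from a two-table solution --- by Corollaries~\ref{cor:ext2}, \ref{cor:ext3}, \ref{cor:global} (e.g.\ OP$^\ast(2,3,6)$, OP$^\ast(2,4,7)$, OP$^\ast(2,3,8)$, OP$^\ast(3,3,2^{\langle b \rangle})$, OP$^\ast(3,3,3,4)$).

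After peeling off all of the above, a short list of sporadic multisets remains --- among them OP$^\ast(2,3,4)$, OP$^\ast(2,3,5)$, OP$^\ast(2,4,4)$, OP$^\ast(2,4,5)$, OP$^\ast(3,3,4)$, OP$^\ast(3,4,5)$, OP$^\ast(3,3,5)$, OP$^\ast(3,3,6)$, OP$^\ast(2,2,3,4)$, OP$^\ast(2,3,3,3)$, and a handful more with $n \in \{11,12,13\}$ --- in which the sum is too small relative to the largest part for any recursive construction of Sections~4--8 to gain traction. For each of these I would give a direct construction: a $1$-rotational solution via Lemma~\ref{lem:1-rotational-q}, exhibiting either a single starter $(\vec{C}_{m_1},\ldots,\vec{C}_{m_k})$-factor that is $(\ZZ_{n-1}^\ast \cup \{\infty,-\infty\})$-orthogonal, or, when no such fully symmetric solution exists, $q$ starter factors for a suitable divisor $q$ of $n-1$ (the necessity of dropping to $q>1$ is exactly the phenomenon already seen for OP$^\ast(4,8)$ in Lemma~\ref{lem:special}). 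These explicit factorizations are collected in the appendix.

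The main obstacle is twofold. First, the bookkeeping: one must verify that the six families above, together with the sporadic list, genuinely partition the set of all multisets of parts $\ge 2$ summing to at most $13$, with no case slipping through --- I would organize this by increasing $n$ and, within each $n$, by $k$. Second, and more substantively, the sporadic cases carry no structural content, so each requires an ad hoc starter factor found by hand or by search, and for one or two of them a short separate argument that a symmetric solution is impossible so that a less symmetric one must be used. Everything else is routine citation.
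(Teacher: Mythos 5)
Your proposal is correct and follows essentially the same route as the paper: the "only if" direction is exactly the three exceptional uniform cases of Theorem~\ref{thm:Kn*}, and the "if" direction is the same exhaustive enumeration by $n$, dispatching the structured families by Theorems~\ref{thm:Kn*} and \ref{thm:main2}, Corollaries~\ref{cor:OP} and \ref{cor:ext}, and the extension results, with the leftover sporadic multisets handled by explicit starters via Lemma~\ref{lem:1-rotational-q} (or, for $n=12$ cases such as $(3,3,6)$ and $(3,4,5)$ where $n-1$ is prime and all parts are at least $3$, by full listings without symmetry) collected in the appendix. The only cosmetic difference is that the paper simply exhibits the non-symmetric solutions rather than proving that symmetric ones are impossible, which is not needed for the theorem itself.
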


\begin{proof} For the uniform case, that is, when $m_1=\ldots = m_k$, the result follows from Theorem~\ref{thm:Kn*}, while for $n$ odd, $m_1\ge3$, and $(m_1,\ldots,m_k) \not\in \{ (4,5),(3,3,5) \}$, the result follows from Corollary~\ref{cor:OP}. The remaining cases are listed in the tables below.

\bigskip

\begin{tabular}{|c|c|l|}
\hline
$n=m_1+ \ldots + m_k$ & $(m_1,\ldots,m_k)$ & OP$^\ast(m_1,\ldots,m_k)$ has a solution by... \\ \hline \hline
5 &  $(2,3)$ &  Theorem~\ref{thm:main2} \\ \hline
6 & $(2,4)$ &  Theorem~\ref{thm:main2}  \\ \hline
7 & $(2,5)$ &  Theorem~\ref{thm:main2}  \\
& $(2,2,3)$ &  Corollary~\ref{cor:ext}(b)  \\ \hline
\end{tabular}

\begin{tabular}{|c|c|l|}
\hline
$n=m_1+ \ldots + m_k$ & $(m_1,\ldots,m_k)$ & OP$^\ast(m_1,\ldots,m_k)$ has a solution by... \\ \hline \hline
8 & $(2,6)$ &  Theorem~\ref{thm:main2}  \\
& $(3,5)$ &  Theorem~\ref{thm:main2}  \\
 & $(2,2,4)$ &  Lemma~\ref{lem:(2,2,4)} \\
 & $(2,3,3)$ &  Lemma~\ref{lem:special} \\ \hline
9 &  $(2,7)$ &  Theorem~\ref{thm:main2}  \\
&  $(4,5)$ &   Theorem~\ref{thm:main2} \\
 & $(2,2,5)$ &  Corollary~\ref{cor:ext}(b) \\
 & $(2,3,4)$ &  Lemma~\ref{lem:1-rotational-q} and Appendix~\ref{app1} \\
  & $(2,2,2,3)$ &  Corollary~\ref{cor:ext}(b) \\ \hline
 10 & $(2,8)$ &  Theorem~\ref{thm:main2}  \\
& $(3,7)$ &  Theorem~\ref{thm:main2}  \\
&  $(4,6)$ &  Theorem~\ref{thm:main2}  \\
 & $(2,2,6)$ &  Corollary~\ref{cor:ext}(b) \\
 & $(2,3,5)$ &  Lemma~\ref{lem:1-rotational-q} and Appendix~\ref{app1} \\
  & $(2,4,4)$ &  Lemma~\ref{lem:1-rotational-q} and Appendix~\ref{app1} \\
 & $(3,3,4)$ &   Lemma~\ref{lem:1-rotational-q} and Appendix~\ref{app3} \\
 & $(2,2,2,4)$ &  Corollary~\ref{cor:ext}(b)  \\
  & $(2,2,3,3)$ &  Lemma~\ref{lem:special}  \\ \hline
11 & $(2,9)$ &  Theorem~\ref{thm:main2}  \\
 & $(2,2,7)$ &  Corollary~\ref{cor:ext}(b) \\
 & $(2,3,6)$ &  Theorems~\ref{thm:main2} and \ref{the:main-ext}(1) \\
  & $(2,4,5)$ &  Lemma~\ref{lem:1-rotational-q} and Appendix~\ref{app1} \\
  & $(3,3,5)$ &  Lemma~\ref{lem:1-rotational-q} and Appendix~\ref{app1} \\
 & $(2,2,2,5)$ &  Corollary~\ref{cor:ext}(b)  \\
  & $(2,2,3,4)$ &  Lemma~\ref{lem:1-rotational-q} and Appendix~\ref{app1} \\
  & $(2,3,3,3)$ &  Lemma~\ref{lem:1-rotational-q} and Appendix~\ref{app1} \\
  & $(2,2,2,2,3)$ &  Corollary~\ref{cor:ext}(b) \\
\hline
12 & (2,10) &  Theorem~\ref{thm:main2} \\
& (3,9) &  Theorem~\ref{thm:main2} \\
& (4,8) &  Lemma~\ref{lem:special} \\
& (5,7) &  Theorem~\ref{thm:main2}\\
& (2,2,8) &  Corollary~\ref{cor:ext}(b) \\
& (2,3,7) &  Theorems~\ref{thm:main2} and \ref{the:main-ext}(1) \\
& (2,4,6) &  Lemma~\ref{lem:special} \\
& (2,5,5) &   Lemma~\ref{lem:1-rotational-q} and Appendix~\ref{app1} \\
& (3,3,6) &  Appendix~\ref{appq} \\
& (3,4,5) &  Appendix~\ref{appq} \\
& (2,2,2,6) &  Corollary~\ref{cor:ext}(b) \\
& (2,2,3,5) &  Lemma~\ref{lem:1-rotational-q} and Appendix~\ref{app1} \\
& (2,2,4,4) &  Theorems~\ref{thm:main2} and \ref{thm:main-even} \\
& (2,3,3,4) &  Lemma~\ref{lem:1-rotational-q} and Appendix~\ref{app1} \\
& (2,2,2,2,4) &  Corollary~\ref{cor:ext}(b) \\
& (2,2,2,3,3) &  Lemma~\ref{lem:special} \\\hline
\end{tabular}

\begin{tabular}{|c|c|l|}
\hline
$n=m_1+ \ldots + m_k$ & $(m_1,\ldots,m_k)$ & OP$^\ast(m_1,\ldots,m_k)$ has a solution by... \\ \hline \hline
13 & (2,11) &  Theorem~\ref{thm:main2} \\
& (2,2,9) &  Corollary~\ref{cor:ext}(b) \\
& (2,3,8) &  Theorems~\ref{thm:main2} and \ref{the:main-ext}(1)\\
& (2,4,7) &  Theorems~\ref{thm:main2} and \ref{the:main-ext}(1) \\
& (2,5,6)  &  Lemma~\ref{lem:1-rotational-q} and Appendix~\ref{app1} \\
& (2,2,2,7) &  Corollary~\ref{cor:ext}(b) \\
& (2,2,3,6) &  Lemma~\ref{lem:1-rotational-q} and Appendix~\ref{app1} \\
& (2,2,4,5) &  Lemma~\ref{lem:1-rotational-q} and Appendix~\ref{app1} \\
& (2,3,3,5) &  Lemma~\ref{lem:1-rotational-q} and Appendix~\ref{app1} \\
& (2,3,4,4) &  Lemma~\ref{lem:1-rotational-q} and Appendix~\ref{app1}\\
& (2,2,2,2,5) &  Corollary~\ref{cor:ext}(b) \\
& (2,2,2,3,4) &  Lemma~\ref{lem:1-rotational-q} and Appendix~\ref{app1} \\
& (2,2,3,3,3) &  Lemma~\ref{lem:1-rotational-q} and Appendix~\ref{app1} \\
& (2,2,2,2,2,3) &  Corollary~\ref{cor:ext}(b) \\  \hline
\end{tabular}

\end{proof}

For $14 \le n \le 16$, we have obtained similar, though incomplete results. Namely, we showed OP$^\ast(m_1,\ldots,m_k)$ has a solution, except possibly for $(m_1,\ldots,m_k) \in \{ (4,10), (6,8),$ $(3,3,8), (3,4,7),(3,5,6),(4,4,6),(4,5,5),(3,3,3,5),(3,3,4,4)\}$.

The difficulty with these outstanding cases (all with $n=14$) is as follows. First, these cases do not fall into any of the families solved by the results of this paper, so a direct construction is needed. A simple 1-rotational approach with $q=1$ described in Lemma~\ref{lem:1-rotational-q} cannot possibly work when $n$ is even and all $m_i \ge 3$. Namely, if in this case, a $(\vec{C}_{m_1},\ldots,\vec{C}_{m_k})$-factor $F$ satisfying the conditions of Lemma~\ref{lem:1-rotational-q} with $q=1$ existed, then $\dci(n-1;\ZZ_{n-1}^\ast)$ would admit a $\ZZ_{n-1}^\ast$-orthogonal subdigraph consisting of disjoint directed cycles and a directed path. Since the differences of the arcs in each directed cycle add up to 0 modulo $n-1$, and the differences in $\ZZ_{n-1}^\ast$ add up to 0 as well, the differences of the arcs in the remaining directed path would have to add up to 0 as well, which is a contradiction.

If $n-1$ has a small divisor $q >1$, then the next simplest approach would be a base-$q$ 1-rotational construction from Lemma~\ref{lem:1-rotational-q}. Unfortunately, for $n=14$, we have that $n-1$ is a prime, so the smallest possible $q=13$. This essentially means that we are looking for a directed 2-factorization without symmetry, which is virtually impossible by hand, and computationally hard.

Finally, applying Theorem~\ref{the:main-ext} to Theorem~\ref{thm:small}, we immediately obtain the following.

\begin{cor}\label{cor:small}
Let $2 \le m_1 \le \ldots \le m_{\ell}<t$ be integers such that $s=m_1+ \ldots + m_{\ell} \le 13$. Furthermore, assume that the following all hold:
\begin{enumerate}[(i)]
\item $(m_1,\ldots,m_{\ell}) \not\in \{ (4),(6),(3,3) \}$;
\item $t>s$; and
\item $\delta(m_1)+\ldots+\delta(m_{\ell}) \le t-\delta(t)-s$.
\end{enumerate}

Then OP$^\ast(m_1,\ldots,m_{\ell},t)$, as well as OP$^\ast(m_1,\ldots,m_{\ell},2^{\langle \frac{t}{2} \rangle})$ if $t$ is even, has a solution.
\end{cor}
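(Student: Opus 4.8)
The plan is to deduce Corollary~\ref{cor:small} directly from Theorem~\ref{the:main-ext} applied to the base solutions catalogued in Theorem~\ref{thm:small}. First I would note that by hypothesis~(i) together with Theorem~\ref{thm:small}, since $s=m_1+\ldots+m_\ell \le 13$ and $(m_1,\ldots,m_\ell) \not\in \{(4),(6),(3,3)\}$, the problem OP$^\ast(m_1,\ldots,m_\ell)$ has a solution. This verifies the hypothesis of Theorem~\ref{the:main-ext} that the smaller Oberwolfach problem is solvable.

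Next I would check the remaining numerical hypotheses of Theorem~\ref{the:main-ext}. Let $a$ be the number of odd integers in $\{\!\!\{m_1,\ldots,m_\ell\}\!\!\}$; by definition of $\delta$ we have $a=\delta(m_1)+\ldots+\delta(m_\ell)$. Hypothesis~(ii) gives exactly the required $t>s$. For the condition $a \le 2\lfloor \frac{t}{2}\rfloor - s$, I would observe that $2\lfloor \frac{t}{2}\rfloor = t-\delta(t)$, so this is precisely the inequality $\delta(m_1)+\ldots+\delta(m_\ell) \le t-\delta(t)-s$, which is hypothesis~(iii). Hence all hypotheses of Theorem~\ref{the:main-ext} are met.

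Applying Theorem~\ref{the:main-ext}(1) then yields a solution to OP$^\ast(m_1,\ldots,m_\ell,t)$, and applying Theorem~\ref{the:main-ext}(2) in the case that $t$ is even yields a solution to OP$^\ast(m_1,\ldots,m_\ell,2^{\langle t/2\rangle})$. This completes the argument.

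I do not anticipate any genuine obstacle here: the corollary is a straightforward specialization, and the only mild subtlety is the translation between the $\delta$-notation used in the corollary statement and the floor/number-of-odd-entries language used in Theorem~\ref{the:main-ext}, namely the identities $a=\sum_i \delta(m_i)$ and $2\lfloor t/2\rfloor = t-\delta(t)$. Once those are recorded, the proof is a one-line invocation. It may be worth remarking explicitly that the excluded triples in~(i) are exactly the exceptions appearing in Theorem~\ref{thm:small}, so that~(i) is the sharpest possible restriction on the base case.
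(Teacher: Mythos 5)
Your proposal is correct and matches the paper exactly: the paper gives no separate proof, stating only that the corollary follows immediately by applying Theorem~\ref{the:main-ext} to Theorem~\ref{thm:small}, which is precisely your argument, including the translation $a=\delta(m_1)+\ldots+\delta(m_\ell)$ and $2\lfloor t/2\rfloor=t-\delta(t)$ showing that hypotheses (ii) and (iii) are the numerical conditions of Theorem~\ref{the:main-ext}.
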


\section{Conclusion}

In this paper, we introduced a recursive method that allows us to construct solutions to larger cases of OP$^\ast$ from solutions to smaller cases. In particular, we showed how to extend the directed 2-factor in a known solution by a long cycle, as well as by a relatively large number of 2-cycles. Among many other explicit infinite families of cases that we were able to successfully address,  this method yields an almost-complete solution to the two-table directed Oberwolfach problem. We were also able to settle all cases with up to 13 vertices.

In closing, we would like to propose several future research directions. The most compelling would be to complete the solution to the two-table OP$^\ast$, which would be analogous to Theorem~\ref{thm:Tra} \cite{BryDan,Gvo,Hag,Tra}. Since the outstanding cases are all bipartite, such a solution could perhaps be obtained as a special case of the solution to the general bipartite case, analogous to the results of \cite{BryDan,Hag} for OP.

Another natural direction would be to push the upper bound on $n$ in Theorem~\ref{thm:small} in order to obtain a result more comparable to that of Theorem~\ref{thm:OPsmall}. While larger cases in Lemma~\ref{lem:special} and Appendix~\ref{app} were obtained computationally, more sophisticated algorithms and more computing power will be needed to solve cases with $n \ge 14$.

It is well known and easy to see that a solution to OP$^\ast(m_1,\ldots,m_t)$ gives rise to a solution to OP$\!_2(m_1,\ldots,m_t)$, which asks for a $(C_{m_1},\ldots,C_{m_t})$-factorization of $2K_n$, the two-fold complete graph with $n=m_1+\ldots+m_t$ vertices. Thus both the explicit solutions to OP$^\ast$, as well as the recursive constructions presented in this paper, extend to OP$\!_2$. It would be of interest to collect the known results on OP$\!_2$ (of which there are many more than for OP$^\ast$), and use these solutions as base cases for our recursive method to see what explicit new solutions can be obtained.

While it is generally believed that the directed version of OP is harder than the undirected version, the discovery of our new recursive method, which in its present form does not apply to undirected simple graphs, may cast a shadow of doubt on this belief. On the other hand, we propose that it might be possible to develop a more limited version of our recursive construction that would, in fact, apply to simple graphs and therefore lead to new solutions for  OP.

%\newpage

\subsection*{Acknowledgements}

The authors wish to thank the anonymous referees for their thoughtful comments, and Kelsey Gasior for mentoring and financially supporting the first author. The first author would like to thank Allah for the knowledge, ideas, and opportunities He has given her. She would also like to thank the second author for her continuous kindness and guidance. The second author gratefully acknowledges support by the Natural Sciences and Engineering Research Council of Canada (NSERC), Discovery Grant RGPIN-2022-02994.

\small

\normalsize
%\newpage

\appendix

\section{Solutions to special small cases}\label{app}

\subsection{Solutions with a single starter}\label{app1}

In each of the following problems, we give a  starter 2-factor in a base-1 1-rotational solution, as described in Lemma~\ref{lem:1-rotational-q}.

\begin{itemize}
\item OP$^\ast(2,3,4)$:
    $ u_1 \, u_7 \, u_1 \; \cup \; u_0 \, u_4 \, u_{\infty} \, u_0 \; \cup \; u_2 \, u_3 \, u_6 \, u_5 \, u_2$;

\item OP$^\ast(2,3,5)$:
    $ u_0 \, u_{\infty} \, u_0  \; \cup \; u_2 \, u_3 \, u_5 \, u_2
    \; \cup \; u_1 \, u_6 \, u_4 \; u_8 \, u_7 \, u_1$;

\item OP$^\ast(2,4,4)$:
    $ u_0 \, u_{\infty} \, u_0  \; \cup \;  u_1 \, u_7 \, u_5 \, u_6 \, u_1 \; \cup \;  u_2 \, u_4 \, u_3 \, u_8 \, u_2$;

\item OP$^\ast(2,4,5)$:
    $ u_1 \, u_2 \, u_1  \; \cup \;  u_3 \, u_{\infty} \, u_8 \, u_6 \, u_3 \; \cup \;  u_0 \, u_4 \, u_7 \, u_9 \, u_5 \, u_0$;

\item OP$^\ast(3,3,5)$:
    $ u_0 \, u_{\infty} \, u_5 \, u_0  \; \cup \; u_3 \, u_4 \, u_6 \, u_3
    \; \cup \; u_1 \, u_9 \, u_2 \; u_8 \, u_7 \, u_1$;

\item OP$^\ast(2,2,3,4)$:
    $  u_1 \, u_9 \, u_1 \; \cup \; u_2 \, u_8 \, u_2  \; \cup \; u_0 \, u_{\infty} \, u_5 \, u_0   \; \cup \; u_3 \, u_4 \, u_7 \, u_6 \, u_3$;

\item OP$^\ast(2,3,3,3)$:
    $  u_3 \, u_7 \, u_3 \; \cup \;  u_0 \, u_{\infty} \, u_5 \, u_0   \; \cup \; u_1 \, u_2 \, u_4 \, u_1
    \; \cup \; u_6 \, u_9 \, u_8 \, u_6$;

\item OP$^\ast(2,5,5)$:
    $  u_0 \, u_{\infty} \, u_0 \; \cup \;  u_1 \, u_8 \, u_2 \, u_4 \, u_3 \, u_1   \; \cup \; u_5 \, u_9 \, u_6 \, u_7
    \; u_{10} \, u_5 $;

\item OP$^\ast(2,2,3,5)$:
    $  u_0 \, u_{\infty} \, u_0 \; \cup \;  u_5 \, u_6 \, u_5 \; \cup \; u_2 \, u_7 \, u_4 \, u_2   \; \cup \; u_1 \, u_8 \, u_{10} \, u_3 \; u_9 \, u_1 $;

\item OP$^\ast(2,3,3,4)$:
    $  u_0 \, u_{\infty} \, u_0 \; \cup \;  u_1 \, u_5 \, u_2 \, u_1 \; \cup \; u_4 \, u_{10} \, u_8 \, u_4   \; \cup \; u_3 \, u_6 \, u_7 \; u_9 \, u_3 $;

%%%
\item OP$^\ast(2,5,6)$:
$ u_3 \, u_6 \, u_3 \; \cup \; u_0 \, u_7 \, u_5 \, u_{11} \, u_1 \, u_0 \; \cup \;
u_2 \, u_{10} \, u_{\infty} \, u_4 \, u_8 \, u_9 \, u_2  $;

\item OP$^\ast(2,2,3,6)$:
$ u_5 \, u_7 \, u_5 \; \cup \; u_{10} \, u_{11} \, u_{10} \; \cup \;  u_1 \, u_4 \, u_8 \, u_1 \; \cup \;
u_0 \, u_{\infty} \, u_6 \, u_2 \, u_9 \, u_3 \, u_0$;

\item OP$^\ast(2,2,4,5)$:
$ u_0 \, u_{11} \, u_0 \; \cup \; u_1 \, u_5 \, u_1 \; \cup \; u_2 \, u_7 \, u_{10} \, u_4 \, u_2 \; \cup \;
u_3 \, u_{\infty} \, u_9 \, u_6 \, u_8 \, u_3$;

\item OP$^\ast(2,3,3,5)$:
$ u_2 \, u_4 \, u_2 \; \cup \; u_0 \, u_ 1 \, u_7 \, u_0 \; \cup \; u_3 \, u_{10} \, u_6 \, u_3 \; \cup \;
u_5 \, u_9 \, u_8 \, u_{11} \, u_{\infty} \, u_5$;

\item OP$^\ast(2,3,4,4)$:
$ u_6 \, u_9 \, u_6 \; \cup \; u_1 \, u_8 \, u_2 \, u_1 \; \cup \; u_0 \, u_{10} \, u_3 \, u_4 \, u_ 0 \; \cup \;
u_5 \, u_7 \, u_{11} \, u_{\infty} \, u_5$;

\item OP$^\ast(2,2,2,3,4)$:
$ u_1 \, u_{10} \, u_1 \; \cup \; u_4 \, u_{11} \, u_ 4 \; \cup \; u_5 \, u_6 \, u_5 \; \cup \; u_3 \, u_7 \, u_9 \, u_3 \; \cup \; u_0 \, u_8 \, u_{\infty} \, u_2 \, u_0$;

\item OP$^\ast(2,2,3,3,3)$:
$ u_0 \, u_5 \, u_0 \; \cup \; u_4 \, u_6 \, u_4 \; \cup \; u_1 \, u_{10} \, u_ 2 \, u_1 \; \cup \; u_3 \, u_9 \, u_{\infty} \, u_3\; \cup \; u_7 \, u_8 \, u_{11} \, u_7$;
\end{itemize}

\subsection{Solutions with three starters}\label{app3}

In the following problem, we give three starter 2-factors in a base-3 1-rotational solution, as described in Lemma~\ref{lem:1-rotational-q}.

\begin{itemize}
\item OP$^\ast(3,3,4)$:
    \begin{eqnarray*}
    F_0 &=& u_0 \, u_5 \, u_2 \, u_0 \; \cup \; u_1 \, u_{\infty} \, u_3 \, u_1  \; \cup \;  u_4 \,  u_6 \, u_8 \, u_7 \, u_4, \\
    F_1 &=& u_0 \, u_3 \, u_4 \, u_0 \; \cup \; u_5 \, u_6 \, u_{\infty} \, u_5  \; \cup \;  u_1 \,  u_8 \, u_2 \, u_7 \, u_1, \\
    F_2 &=& u_2 \, u_4 \, u_3 \, u_2 \; \cup \; u_7 \, u_8 \, u_{\infty} \, u_7  \; \cup \;  u_0 \,  u_6 \, u_1 \, u_5 \, u_0.
    \end{eqnarray*}

\end{itemize}

\subsection{Solutions without symmetry}\label{appq}

In each of the following problems, we list the directed 2-factors that form a solution.

\begin{itemize}
\item OP$^\ast(3,3,6)$:
    \begin{eqnarray*}
    F_0 &=& u_3 \, u_{\infty} \, u_{10} \, u_3 \; \cup \; u_4 \, u_6 \, u_7 \, u_4  \; \cup \;  u_0 \,  u_8 \, u_9 \, u_2 \, u_5 \, u_1 \, u_0, \\
    F_1 &=& u_1 \, u_2 \, u_{10} \, u_1 \; \cup \; u_3 \, u_6 \, u_8 \, u_3  \; \cup \;  u_0 \,  u_4 \, u_7 \, u_9 \, u_{\infty} \, u_5 \, u_0, \\
    F_2 &=& u_1  \, u_{10} \, u_4 \, u_1 \; \cup \; u_7 \, u_{\infty} \, u_8 \, u_7  \; \cup \;  u_0 \,  u_6 \, u_2 \, u_9 \, u_5 \, u_3 \, u_0, \\
    F_3 &=& u_0  \, u_{10} \, u_9 \, u_0 \; \cup \; u_7 \, u_8 \, u_{\infty} \, u_7  \; \cup \;  u_1 \,  u_5 \, u_4 \, u_2 \, u_6 \, u_3 \, u_1, \\
    F_4 &=& u_1  \, u_6 \, u_{\infty} \, u_1 \; \cup \; u_5 \, u_9 \, u_7 \, u_5  \; \cup \;  u_0 \,  u_3 \, u_2 \, u_8 \, u_4 \, u_{10} \, u_0, \\
    F_5 &=& u_4  \, u_5 \, u_{\infty} \, u_4 \; \cup \; u_3 \, u_{10} \, u_7 \, u_3  \; \cup \;  u_0 \,  u_2 \, u_1 \, u_9 \, u_8 \, u_6 \, u_0, \\
    F_6 &=& u_2  \, u_7 \, u_{10} \, u_2 \; \cup \; u_4 \, u_{9} \, u_6 \, u_4  \; \cup \;  u_0 \,  u_1 \, u_{\infty} \, u_3 \, u_5 \, u_8 \, u_0, \\
    F_7 &=& u_2  \, u_3 \, u_7 \, u_2 \; \cup \; u_5 \, u_{10} \, u_8 \, u_5  \; \cup \;  u_0 \,  u_{\infty} \, u_6 \, u_9  \, u_1 \, u_4 \, u_0, \\
    F_8 &=& u_3  \, u_9 \, u_4 \, u_3 \; \cup \; u_5 \, u_6 \, u_{10} \, u_5  \; \cup \;  u_0  \, u_7 \, u_1  \, u_8 \, u_2 \,  u_{\infty} \, u_0, \\
    F_9 &=& u_1  \, u_3 \, u_8 \, u_1 \; \cup \; u_2 \, u_4 \, u_{\infty} \, u_2  \; \cup \;  u_0  \, u_9 \, u_{10}  \, u_6 \, u_5 \, u_7 \, u_0, \\
    F_{10} &=& u_0 \, u_5  \, u_2 \, u_0 \; \cup \; u_1 \, u_7 \, u_6 \, u_1  \; \cup \;  u_3  \, u_4 \, u_8 \, u_{10}  \, u_{\infty} \, u_9 \, u_3;
    \end{eqnarray*}

\item OP$^\ast(3, 4, 5)$:
\begin{eqnarray*}
F_0 &=& u_9 \, u_{10} \, u_{\infty} \, u_9 \; \cup \;  u_3 \, u_7 \, u_4 \, u_5 \,  u_3 \; \cup \;  u_0 \, u_6 \, u_8 \, u_1 \, u_2 \, u_0, \\
F_1 &=& u_7 \, u_9 \, u_{\infty} \, u_7 \; \cup \;  u_0 \, u_{10} \, u_3 \, u_8 \, u_0 \; \cup \;   u_1 \, u_4 \, u_6 \, u_5 \, u_2 \, u_1, \\
F_2 &=& u_2 \, u_7 \, u_{10} \, u_2 \; \cup \;  u_1 \, u_{\infty} \, u_4 \, u_9 \, u_1 \; \cup \;  u_0 \, u_8 \, u_6 \, u_3 \, u_5 \, u_0, \\
F_3 &=& u_1 \, u_9 \, u_4 \, u_1 \; \cup \;  u_2 \, u_{10} \, u_8 \, u_7 \, u_2 \; \cup \;  u_0 \, u_5 \, u_6 \, u_{\infty} \, u_3 \, u_0, \\
F_4 &=& u_0 \, u_2 \, u_{\infty} \, u_0 \; \cup \;  u_5 \, u_9 \, u_8 \, u_{10} \, u_5 \; \cup \;  u_1 \, u_6 \, u_4 \, u_7 \, u_3 \, u_1, \\
F_5 &=& u_5 \, u_{10} \, u_9 \, u_5 \; \cup \;  u_2 \, u_3 \, u_4 \, u_8 \, u_2 \; \cup \;  u_0 \, u_{\infty} \, u_6 \, u_1 \, u_7 \, u_0, \\
F_6 &=& u_2 \, u_8 \, u_4 \, u_2 \; \cup \;   u_0 \, u_3 \, u_9 \, u_6 \, u_0 \; \cup \;  u_1 \, u_5 \, u_7 \, u_{\infty} \, u_{10} \, u_1, \\
F_7 &=& u_2 \, u_9 \, u_3 \, u_2 \; \cup \;   u_1 \, u_8 \, u_5 \, u_{\infty} \, u_1 \; \cup \;  u_0 \, u_7 \, u_6 \, u_{10} \, u_4 \, u_0, \\
F_8 &=& u_3 \, u_{\infty} \, u_8 \, u_3 \; \cup \;  u_2 \, u_4 \, u_{10} \, u_6 \, u_2 \; \cup \;  u_0 \, u_9 \, u_7 \, u_5 \, u_1 \, u_0, \\
F_9 &=& u_1 \, u_{10} \, u_7 \, u_1 \; \cup \;  u_2 \, u_5 \, u_8 \, u_{\infty} \,  u_2 \; \cup \;  u_0 \, u_4 \, u_3 \, u_6 \, u_9 \,  u_0, \\
F_{10} &=& u_4 \, u_{\infty} \, u_5 \, u_4 \; \cup \;  u_0 \, u_1 \, u_3 \, u_{10} \, u_0 \; \cup \;   u_2 \, u_6 \, u_7 \, u_8 \, u_9 \, u_2.
\end{eqnarray*}

\end{itemize}

\end{document}